\newcommand\A{\mathcal{A}}
\newcommand\QQ{\mathbb{Q}}
\newcommand\cQ{\overline{\mathbb{Q}}}
\newcommand\NN{\mathbb{N}}
\newcommand\RR{\mathbb{R}}
\newcommand\WW{\mathcal{W}}
\newcommand\ZZ{\mathbb{Z}}
\newcommand\PP{\mathbb{P}}
\newcommand\x{\mathbf{x}}
\newcommand\y{\mathbf{y}}
\renewcommand\v{\mathbf{v}}
\newcommand\z{\mathbf{z}}
\renewcommand\d{\,\mathrm{d}}
\DeclareMathOperator{\rank}{rank}
\DeclareMathOperator{\sing}{sing}
\DeclareMathOperator{\Gal}{Gal}
\DeclareMathOperator{\Tr}{Tr}
\DeclareMathOperator{\Norm}{N}
\renewcommand{\leq}{\leqslant}
\renewcommand{\geq}{\geqslant}
\newcommand\ve{\varepsilon}
\newcommand\al{\alpha}
\newcommand\D{\Delta}
\newtheorem{theorem}{Theorem}
\newtheorem*{theorem-f}{Th\'eor\`eme}
\newtheorem*{cor}{Corollary}
\newtheorem*{cor-f}{Corollaire}
\newtheorem*{con}{Conjecture}
\newtheorem{lemma}{Lemma}
\theoremstyle{definition}
\newtheorem{remark}{Remarque}
\newtheorem*{ack}{Acknowledgements}
\newtheorem*{notat}{Notation}
\newcommand{\M}{\mathfrak{M}}
\renewcommand{\ss}{\mathfrak{S}}
\newcommand{\ii}{\mathfrak{I}}
\newcommand{\n}{\mathfrak{n}}
\newcommand{\m}{\mathfrak{m}}
\numberwithin{equation}{section}
\newcommand\kbar{{\overline k}}
\newcommand\pic{{\rm Pic} \hskip1mm}
\newcommand\br{{\rm Br} \hskip1mm}
\newcommand \Z{{\mathbf Z}}
\newcommand\g{{\mathcal G}}
\newcommand\Div{{\rm Div}  }
\newcommand\X{{  \overline{X} }  }
\newcommand\Y{{  \overline{Y} }  }
\newcommand\U{{  \overline{U} }  }
\newcommand\et{\mathrm{\mathaccent 19 et}}
\newcommand\G{{\mathbf G}}
\begin{document}

\title[Rational points on cubic hypersurfaces]{Rational points on cubic hypersurfaces \\that
  split off a form}

\author{T.D. Browning}
\address{School of Mathematics\\
University of Bristol\\ Bristol BS8 1TW}
\email{t.d.browning@bristol.ac.uk}

\date{\today}

\begin{abstract}
Let $X$ be a projective  
cubic hypersurface of dimension $11$ or more,
which is defined over $\QQ$. We show that $X(\QQ)$ is non-empty provided
that the cubic form defining $X$ can be written as the sum of two 
forms that share no common variables.
\end{abstract}

\subjclass{11D72 (11E76, 11P55 14G25)}

\maketitle
\tableofcontents

\section{Introduction}

Let $X\subset\PP^{n-1}$ be a cubic
hypersurface, given as the zero locus of a cubic
form $C\in\ZZ[x_1,\ldots,x_n]$. This paper is concerned with the
problem of determining when the set of rational points $X(\QQ)$ on $X$ is
non-empty.  There is a well-known conjecture that $X(\QQ)\neq \emptyset$ as soon as
$n \geq 10$.  
In fact, for non-singular cubic hypersurfaces, it is expected that the
Hasse principle holds as soon as $n\geq 5$. This 
states that in order for $X(\QQ)$ to be non-empty it is necessary and
sufficient that $X(\QQ_p)$ is non-empty for every prime $p$. 
For a large class of possibly singular cubic hypersurfaces $X\subset \PP^{n-1}$, 
Salberger has calculated the Brauer group
$\mathrm{Br}(Y)$ associated to a projective non-singular model $Y$ of
$X$.  A detailed proof of this calculation is provided by 
Colliot-Th\'el\`ene in the appendix to this
paper. As a consequence of this investigation
one has the following prediction.

\begin{con}
Let $X\subset\PP^{n-1}$ be a cubic
hypersurface defined over $\QQ$ which is not a cone, with $n\geq 5$ and 
singular locus which is empty or of codimension at least $4$ in $X$.
Then the Hasse principle holds for the locus of non-singular points on $X$.
\end{con}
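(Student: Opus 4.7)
The plan is to combine the Brauer group calculation of Salberger recalled above (proved in the appendix by Colliot-Th\'el\`ene) with an analytic input for producing rational points on the smooth locus $\Xns$. Let $\pi\colon Y\to X$ denote a projective non-singular model. Because the singular locus of $X$ has codimension at least $4$ in $X$, the geometric Picard group $\Pic(\Y)$ is generated by the pullback of the hyperplane class together with the exceptional divisors of $\pi$, and accordingly $\mathrm{Br}(Y)/\mathrm{Br}(\QQ)$ is controlled by a finite, explicit list of classes attached to the singularities of $X$. The philosophy underlying the conjecture is that this computation makes the Brauer--Manin obstruction completely explicit, and that for the cubics considered here it is automatically vacuous, reducing matters to a pure Hasse-principle statement on $\Xns$.

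First I would use the description of $\mathrm{Br}(Y)/\mathrm{Br}(\QQ)$ to verify, at each place $v$ and for each local point, the triviality of the local Brauer--Manin pairing; in the cases where $\mathrm{Br}(Y)/\mathrm{Br}(\QQ)$ is itself trivial this is automatic. The second step is the analytic heart: upgrade an adelic point orthogonal to $\mathrm{Br}(Y)$ to an honest rational point on $\Xns$. For $n$ large this is approached by the Hardy--Littlewood circle method in the forms developed by Davenport, Heath-Brown and Hooley, possibly weighted by a finite sum of characters detecting the Brauer classes so as to deliver rational points in every adelic neighbourhood. In the singular regime a more flexible alternative is to seek a fibration of $X$ (for instance by projecting from a rational singular point of high multiplicity) into quadrics or conics and to invoke the fibration method of Colliot-Th\'el\`ene--Sansuc--Swinnerton-Dyer, which controls the Hasse principle for the total space provided the base and the generic fibre behave well.

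The hard part is precisely this second step in the range where $n$ is small, say $5\leq n\leq 9$: no circle-method technology currently produces rational points on non-singular cubics in so few variables, and useful fibrations exist only under restrictive structural hypotheses on the defining form $C$. Consequently the conjecture is open in its stated generality, and progress is likely to come from imposing an additional structural hypothesis on $C$ -- for example, that $C$ splits off a form in disjoint variables, as in the main theorem of this paper -- under which the extra structure either trivialises the Brauer--Manin data or renders an analytic input tractable.
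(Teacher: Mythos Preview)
The statement you are addressing is labelled a \emph{Conjecture} in the paper, not a theorem: the paper does not prove it and does not claim to. What the paper actually establishes (in the appendix by Colliot-Th\'el\`ene, following Salberger) is only the Brauer-group input that \emph{motivates} the conjecture, namely that for any smooth projective model $Y$ of such an $X$ one has $\mathrm{Br}(\QQ)\cong\mathrm{Br}(Y)$. There is no Brauer--Manin obstruction to account for, and this is why one is entitled to predict the Hasse principle for $\Xns$; the paper stops there.

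Your write-up is therefore not a proof and does not pretend to be one --- you yourself conclude that ``the conjecture is open in its stated generality''. That assessment is correct, and it matches the paper. One small correction to your first paragraph: the appendix does not merely say that $\mathrm{Br}(Y)/\mathrm{Br}(\QQ)$ is ``controlled by a finite, explicit list of classes attached to the singularities of $X$''; under the codimension-$4$ hypothesis it shows that this quotient is \emph{zero} (the Corollaire states $\mathrm{Br}(k)\xrightarrow{\sim}\mathrm{Br}(Y)$). So your ``first step'' is in fact already complete and unconditional --- the local pairing is vacuous at every place --- and the entire content of the conjecture lies in your ``second step'', the production of a rational point on $\Xns$ from an adelic one. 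As you correctly note, current circle-method technology does not reach $n$ as small as $5$, and no general fibration argument is known either; this is precisely why the paper presents the statement as a conjecture rather than a theorem.
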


Let us now consider some of the progress that has been made towards
this conjecture. 
When $C$ is diagonal it follows from the work
of Baker \cite{baker} that $X$ has $\QQ$-rational points as soon as
$n\geq 7$. At the opposite end of the spectrum, when absolutely no assumptions are
made about the shape of $C$, a lot of work has been invested in producing a
reasonable lower bound for the number of variables needed to ensure that
$X(\QQ)\neq \emptyset$. Building on work of Davenport \cite{dav-32, dav-16},  Heath-Brown \cite{14} has 
recently shown that $n\geq 14$ variables are enough to
secure this property for an arbitrary cubic
hypersurface defined over $\QQ$.
In the light of this body of work it is very natural to try and evince
intermediate 
results in which the existence of rational points is guaranteed
for cubic hypersurfaces in fewer than $14$
variables when mild assumptions are made about the structure of the
hypersurface. It is precisely this point of view that is the focus of the present investigation. 

Let $\sing(X)$ denote the singular locus of $X$,
as a projective subvariety of $X$. When $C\in \ZZ[x_1,\ldots,x_n]$ is
non-singular, so that  $\sing(X)$ is empty, it follows from work of
Hooley \cite{hooley1} that the Hasse principle holds for $X$
provided that $n\geq 9$.
As is well-known, the local conditions are automatic when $n\geq 10$,
and so $X(\QQ)$ is non-empty for non-singular $X$ provided that $n\geq
10$.  This fact was first proved by Heath-Brown \cite{hb-10}. 
When $\sing(X)$ has dimension
$\sigma\geq 0$, joint work of the author \cite{roth} with Heath-Brown 
shows that $X(\QQ)$ is non-empty provided that 
$$
n\geq 
\begin{cases}
11, & \mbox{if $\sigma=0$,}\\
12, & \mbox{if $\sigma=1$,}\\
13, & \mbox{if $\sigma=2$.}
\end{cases}
$$
We will make use of this result shortly.

Let $m<n$ be a positive integer. 
We will say that an integral cubic form $C$ in $n$ variables ``splits off an
$m$-form'' if there exist non-zero cubic forms $C_1,C_2$
with integer coefficients so that
$$
C(x_1,\ldots,x_n)=C_1(x_1,\ldots,x_m)+C_2(x_{m+1},\ldots,x_n),
$$
identically in $x_1,\ldots,x_n$. We will merely say that 
$C$ ``splits off a form'' if $C$ splits off an $m$-form for some
$1\leq m < n$.  The following is our 
main result.

\begin{theorem}\label{main}
Let $X\subset \PP^{n-1}$ be a hypersurface defined by a cubic
form that splits off a form, with $n\geq 13$. Then $X(\QQ)\neq \emptyset$.
\end{theorem}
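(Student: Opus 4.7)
\medskip
My starting point is the theorem of Browning--Heath-Brown \cite{roth}, which gives $X(\QQ) \neq \emptyset$ for any cubic hypersurface $X \subset \PP^{n-1}$ with $n \geq 13$ and $\dim \sing X \leq 2$. Write $C = C_1(x_1,\ldots,x_m) + C_2(x_{m+1},\ldots,x_n)$ and assume without loss of generality that $m \leq n-m$, so $m \leq 6$. Setting $Y_i := \{C_i = 0\}$, any rational point of $Y_i$ extends to a rational point of $X$ by padding with zeros on the complementary block of variables, so I may suppose throughout that $Y_1(\QQ) = Y_2(\QQ) = \emptyset$. Since $C_1$ and $C_2$ share no variables, the gradient of $C$ is block-diagonal and a direct computation gives
\[
\dim \sing X \;=\; \sigma_1 + \sigma_2 + 1,
\]
where $\sigma_i := \dim \sing Y_i$ (with the convention $\dim \emptyset = -1$). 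Hence \cite{roth} applies whenever $\sigma_1 + \sigma_2 \leq 1$, and only the residual range $\sigma_1 + \sigma_2 \geq 2$ remains.

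In this residual case, at least one of $C_1, C_2$ has a singular locus of dimension close to its total number of variables. Here I would invoke a classical reduction: if $\sing Y_i$ contains a $\QQ$-rational linear subspace of projective dimension $\sigma$, then after a suitable linear change of coordinates
\[
C_i(u,v) \;=\; P(u) \;+\; \sum_{s=1}^{\sigma+1} v_s\, Q_s(u),
\]
with $u$ ranging over $\AA^{m_i - \sigma - 1}$ and the $v_s$ parameterizing the linear subspace. Either some $Q_s \not\equiv 0$, in which case for generic $u \in \QQ^{m_i - \sigma - 1}$ the equation $C_i(u,v) = 0$ becomes a non-trivial linear system in $v$ (producing a $\QQ$-point on $Y_i$, contrary to assumption), or else $C_i$ depends on at most $m_i - \sigma - 1$ variables. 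In the latter case the problem reduces, after discarding the freed variables, to a cubic form in strictly fewer variables that still splits off a form, and one iterates until the singular locus of the reduced $X$ falls within the scope of \cite{roth}.

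The argument must be complemented by a direct treatment of the small cases $m \in \{1,2,3\}$, where $C_1$ is respectively a cube of a linear form, a binary cubic, or a plane cubic curve. Here $Y_1(\QQ) = \emptyset$ forces strong restrictions on $C_1$ (for instance $C_1$ irreducible, with $C_1/\text{(leading coefficient)}$ essentially a norm form of a cubic field for $m = 2$), and explicit arithmetic of cubic fields or genus-one curves must be used to show that $C_2$, in its $n - m \geq 10$ remaining variables, represents some value of the form $-C_1(\x)$ with $\x \in \QQ^m$. The main technical obstacle I anticipate is ensuring that the linear subspaces extracted from the singular loci in the residual case are defined over $\QQ$, not merely over $\overline{\QQ}$; this is plausible whenever $\sigma_i$ is large compared with the number of variables, but is delicate in the anisotropic small-$m$ regime and likely requires a dedicated Galois-descent argument.
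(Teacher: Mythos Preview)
Your reduction in the residual range $\sigma_1+\sigma_2\geq 2$ collapses under your own standing hypothesis. You assume throughout that $Y_1(\QQ)=Y_2(\QQ)=\emptyset$. But any $\QQ$-rational linear subspace of $\sing Y_i$ of projective dimension $\sigma\geq 0$ is in particular a non-empty $\QQ$-rational subvariety of $Y_i$, so its existence already forces $Y_i(\QQ)\neq\emptyset$. Thus the premise of your reduction (``$\sing Y_i$ contains a $\QQ$-rational linear subspace'') is \emph{incompatible} with the case you are trying to treat; the dichotomy ``either some $Q_s\not\equiv 0$ and we find a point, or $C_i$ sheds variables'' is never reached. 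The obstacle you flag at the end---rationality of the linear subspace---is not a technicality to be patched by Galois descent; it is the whole problem. With $Y_i(\QQ)=\emptyset$ the singular locus has no $\QQ$-points whatsoever, and there is no mechanism here to shrink $\sigma_1+\sigma_2$.

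Nor can you hope that the residual range is empty. For $n_1=1$ one has $\sigma_1=-1$, and the known results (Heath-Brown for non-singular forms, \cite{roth} for $\sigma\leq 1$) show that if $Y_2(\QQ)=\emptyset$ with $n_2=12$ then necessarily $\sigma_2\geq 2$; so $\sigma_1+\sigma_2\geq 1$ is forced but $\sigma_1+\sigma_2\leq 1$ is \emph{not}, and the case $\sigma_2\geq 3$ is genuinely open from this point of view. Similar remarks apply for other small $n_1$. In short, the ``easy'' case $\sigma_1+\sigma_2\leq 1$ covered by \cite{roth} need not occur, and your proposed treatment of the complementary case does nothing.

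The paper takes a completely different route: it applies the circle method directly to $C=C_1+C_2$, writing $N(P)=\int_0^1 S_1(\alpha)S_2(\alpha)\,\d\alpha$ and carrying out a case-by-case minor-arc analysis for each $n_1\in\{1,\ldots,6\}$, combining H\"older's inequality, the mean-value estimate of \cite{14}, pointwise Weyl-type bounds, and (for $n_1\in\{3,4,5\}$) the geometric Lemmas~\ref{lem:curve}--\ref{lem:3fold} to ensure $\sigma_1\leq 0$ so that sharp major-arc approximations for $S_1$ are available. There is no purely geometric shortcut of the kind you sketch.
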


The essential content of Theorem \ref{main} is that we can save $1$
variable in the result of Heath-Brown \cite{14} when the
underlying cubic form splits off a form.
It should be stressed that 
the existence of a single $\QQ$-rational point on $X$ is
enough to demonstrate the $\QQ$-unirationality of $X$, 
and so the Zariski density of $X(\QQ)$ in $X$, when $X$ is
geometrically integral and not a
cone.   Variants of this result have been known for a long time (cf
\cite{ct2,manin, segre-b}). 
In the generality with which we have stated the result, it appears in 
the work of Colliot-Th\'el\`ene and Salberger
\cite[Proposition 1.3]{c-t-s} and in that of Koll\'ar \cite{kollar}.

Our work has implications for the problem of
determining when an arbitrary cubic form $C\in \ZZ[x_1,\ldots,x_n]$ represents 
all non-zero $a\in \QQ$, using rational values for the variables. 
When this property holds we say that $C$
``captures $\QQ^*$''.  Recall that a cubic form is said to be degenerate if 
the corresponding cubic hypersurface is a cone.
Fowler \cite{fowler} has shown that any
non-degenerate cubic
form that  represents $0$ automatically captures $\QQ^*$ provided only
that $n\geq 3$.  Hence it suffices to fix attention on those forms
that do not represent zero non-trivially. On multiplying through by
denominators it will clearly suffice to establish 
that cubic forms of the shape 
\begin{equation}
  \label{eq:1-form}
C(x_1,\ldots,x_n)-ax_{n+1}^3 
\end{equation}
represent zero non-trivially, with $a$ an
arbitrary non-zero integer.  But this form splits off a
$1$-form, and so is handled by  Theorem \ref{main}. In
this way we deduce the following result.

\begin{cor}
Let $C\in \ZZ[x_1,\ldots,x_n]$ be a non-degenerate cubic form, with
$n\geq 12$. 
Then $C$ captures $\QQ^*$.
\end{cor}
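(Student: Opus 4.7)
The plan is to deduce the corollary from Theorem \ref{main} via the auxiliary cubic form displayed in \eqref{eq:1-form}, following the strategy already sketched in the excerpt. Fix $C\in\ZZ[x_1,\ldots,x_n]$ a non-degenerate cubic form with $n\geq 12$, and let $a\in\QQ^*$. Writing $a=p/q$ with $p,q$ non-zero integers, the equation $C(\x)=a$ has a rational solution if and only if $C(q\x)=pq^2$ does, so after clearing denominators it suffices to treat the case $a\in\ZZnz$.

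Next, I would introduce the auxiliary cubic form
$$
F(x_1,\ldots,x_n,x_{n+1})=C(x_1,\ldots,x_n)-ax_{n+1}^3\in\ZZ[x_1,\ldots,x_{n+1}].
$$
This lives in $n+1\geq 13$ variables and visibly splits off a $1$-form, the summand $-ax_{n+1}^3$ being non-zero because $a\neq 0$ and $C$ being non-zero because it is non-degenerate. Theorem \ref{main} therefore applies to the projective hypersurface $\{F=0\}\subset\PP^n$ and produces a non-trivial zero $(x_1,\ldots,x_{n+1})\in\QQ^{n+1}\setminus\{\mathbf{0}\}$ of $F$.

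Finally I would split on whether the last coordinate vanishes. If $x_{n+1}\neq 0$, then dividing through by $x_{n+1}^3$ exhibits $C(x_1/x_{n+1},\ldots,x_n/x_{n+1})=a$, as required. If instead $x_{n+1}=0$, the remaining coordinates form a non-trivial rational zero of $C$ itself, whereupon Fowler's theorem \cite{fowler}, which applies since $C$ is non-degenerate and $n\geq 3$, guarantees that $C$ captures $\QQ^*$ and in particular represents $a$.

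The substantive content is of course entirely concentrated in Theorem \ref{main}; once that input is granted, the deduction above is essentially formal, the only real point of vigilance being to verify that both summands of $F$ are non-zero cubic forms so that $F$ genuinely splits off a form in the sense of the paper. I do not anticipate any hidden obstacle in this reduction.
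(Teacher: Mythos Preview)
Your proposal is correct and follows essentially the same approach as the paper: form the auxiliary cubic $C(\x)-ax_{n+1}^3$ in $n+1\geq 13$ variables, apply Theorem~\ref{main}, and invoke Fowler's result to handle the case where the non-trivial zero has $x_{n+1}=0$. The only cosmetic difference is that the paper first disposes of forms representing zero via Fowler and then applies Theorem~\ref{main}, whereas you apply Theorem~\ref{main} first and case-split afterwards; the content is identical.
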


This result should be compared with the work of Heath-Brown \cite{14}, which
implies that $n\geq 13$ variables
suffice. It follows from the work of 
Hooley \cite{hooley1} that this may be improved to $n\geq 8$ 
when $C$ is non-singular. As indicated in \cite[Appendix 1]{hb-10} the
latter lower bound is 
probably the correct one for arbitrary cubic forms, 
since \eqref{eq:1-form} always has
non-trivial $p$-adic zeros for $n$ in this range.

Let $n\geq 4$. 
When $X\subset \PP^{n-1}$ is a hypersurface defined by a cubic
form that splits off a form, we are able to handle fewer 
variables when appropriate assumptions are made about one of the forms. 
If $X$ is a cone then we will see in Lemma \ref{lem:p1.1} that
$X(\QQ)\neq \emptyset$. If, on the other hand, $X$ is not a cone let
us consider the effect of supposing that 
the underlying cubic form splits off a
non-singular $m$-form $C_1(x_1,\ldots,x_m)$.
If $m=n-1$ then $X$ is itself non-singular and we automatically have
$X(\QQ)\neq \emptyset$ when $n\geq 10$. If $m\leq n-2$ then 
the residual form $C_2$ defines a projective cubic hypersurface of dimension $n-m-2$, and as such
has singular locus of dimension at most $n-m-3$.
But then it follows that $X$ has singular locus
of dimension at most $n-m-3$. Thus 
we may deduce from \cite{roth} that
$X(\QQ)\neq \emptyset$ provided that $n\geq 8+n-m$. 
We record this observation in the following result.

\begin{theorem}\label{t:goat}
Let $X\subset \PP^{n-1}$ be a hypersurface defined by a cubic
form that splits off a non-singular $m$-form, with $m\geq 8$ and
$n\geq 10$.
Then $X(\QQ)\neq \emptyset$.
\end{theorem}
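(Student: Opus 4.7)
The plan is to argue by a short case analysis, exploiting the fact that non-singularity of $C_1$ forces the singular locus of $X$ to lie in a small coordinate subspace and to coincide there with the singular locus of the cubic hypersurface cut out by $C_2$.

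First I would dispose of the case where $X$ is a cone by direct appeal to Lemma \ref{lem:p1.1}. Thereafter I may assume $X$ is not a cone. Writing $C = C_1(x_1,\ldots,x_m) + C_2(x_{m+1},\ldots,x_n)$, the partial derivatives of $C$ decouple into those of $C_1$ (involving only $x_1,\ldots,x_m$) and those of $C_2$ (involving only $x_{m+1},\ldots,x_n$). Hence a singular point of $X$ must satisfy $\nabla C_1 = 0$ in the first block, which, since $C_1$ is non-singular, forces $x_1 = \cdots = x_m = 0$. So $\sing(X)$ is contained in the linear subspace $\Lambda = \{x_1 = \cdots = x_m = 0\} \cong \PP^{n-m-1}$ and coincides there with $\sing(Y)$, where $Y$ is the cubic hypersurface $\{C_2 = 0\} \subset \Lambda$.

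The case $m = n-1$ is immediate: then $C_2 = c\, x_n^3$ for some $c \neq 0$, and a direct check of the gradient shows $X$ is non-singular, so Heath-Brown's \cite{hb-10} result for smooth cubic hypersurfaces in $n \geq 10$ variables gives $X(\QQ) \neq \emptyset$. For $m \leq n-2$, I would show $\dim \sing(Y) \leq (n-m-2) - 1 = n-m-3$, using that a projective cubic hypersurface in $\PP^N$ with singular locus of codimension zero must be cut out by a non-reduced form $\ell^3$ or $\ell^2\ell'$, which uses too few essential variables to be compatible with $X$ not being a cone. With $\sigma := \dim \sing(X) \leq n-m-3$ in hand, the bounds from \cite{roth} give $X(\QQ) \neq \emptyset$ whenever $n \geq 11 + \sigma$, which rearranges exactly to the hypothesis $m \geq 8$; the remaining range $\sigma \geq 3$ forces $n \geq m + 6 \geq 14$, so Heath-Brown's 14-variable bound \cite{14} applies unconditionally.

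The main technical point I anticipate is the justification of the singular-locus bound $\dim \sing(Y) \leq n - m - 3$ under the running assumption that $X$ is not a cone; once this short projective-geometry input is in place, the quoted results from \cite{roth}, \cite{hb-10} and \cite{14} assemble to give the full range $m \geq 8$, $n \geq 10$ in the statement.
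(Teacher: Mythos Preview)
Your proposal matches the paper's argument exactly: handle the cone case by Lemma~\ref{lem:p1.1}, the case $m=n-1$ by non-singularity of $X$ and \cite{hb-10}, and the case $m\leq n-2$ by bounding $\sigma=\dim\sing(X)\leq n-m-3$ and invoking \cite{roth} (you also make explicit the appeal to \cite{14} for $\sigma\geq 3$, which the paper absorbs into the single condition $n\geq 8+n-m$). One minor refinement to your justification of the singular-locus bound: when $n-m=2$ and $C_2=\ell^2\ell'$ with $\ell,\ell'$ linearly independent, the form $C_2$ genuinely uses both variables, so your cone argument does not apply---but then $\ell$ is necessarily defined over $\QQ$ (Galois must preserve the repeated factor), and $X$ contains the $\QQ$-point with $x_1=\cdots=x_m=0$ and $\ell=0$, so this edge case is harmless.
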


It would be interesting to reduce the range of $m$ needed to ensure
the validity of Theorem \ref{t:goat}. 
Ours is not the first attempt to better understand the arithmetic of
cubic hypersurfaces that split off a form. 
Indeed, in Davenport's  \cite{dav-16} treatment of cubic forms in $16$ variables,
a fundamental ingredient in the treatment of certain
bilinear equations is a separate analysis of those forms that split
into two.  In further work, 
Colliot-Th\'el\`ene and Salberger \cite{c-t-s} have shown that the
Hasse principle holds for any cubic hypersurface in $\PP^{n-1}$ that contains a set of 
three conjugate singular points, provided only that $n\geq 4$.
Given a cubic extension $K$ of $\QQ$, define the corresponding norm form 
\begin{equation}
  \label{eq:NORM}
N(x_1,x_2,x_3):=\Norm_{K/\QQ}(\omega_1 x_1+\omega_2 x_2+\omega_3 x_3),
\end{equation}
where $\{\omega_1, \omega_2, \omega_3\}$ is a basis of $K$ as a vector
space over $\QQ$. In view of the fact that the local conditions are
automatically satisfied for cubic forms in at least $10$ variables, we
observe the following easy consequence.

\begin{theorem}[Colliot-Th\'el\`ene and Salberger 
\cite{c-t-s}]
\label{main'''}
Let $X\subset \PP^{n-1}$ be a hypersurface defined by a cubic
form that splits off a norm form, with $n\geq 10$. Then $X(\QQ)\neq \emptyset$.
\end{theorem}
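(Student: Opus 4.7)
My plan is to reduce the statement to the Colliot-Th\'el\`ene--Salberger theorem quoted immediately above, which asserts the Hasse principle for any cubic hypersurface in $\PP^{n-1}$ ($n \ge 4$) that contains three Galois-conjugate singular points. Once I produce such a triple on $X$, that theorem plus the classical fact that integral cubic forms in at least $10$ variables are isotropic over $\RR$ (trivial) and over every $\QQ_p$ (Demyanov, Lewis) will immediately yield $X(\QQ)\ne\emptyset$.

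First I would decompose
\[
C(x_1,\ldots,x_n)=N(x_1,x_2,x_3)+C_2(x_4,\ldots,x_n)
\]
and factor the norm form over $\cQ$ as $N=L_1L_2L_3$, where the three linear forms in $x_1,x_2,x_3$ are conjugate under the action on the embeddings $K\hookrightarrow\cQ$. No two of the $L_i$ are proportional: if $L_j=c L_i$ then the relation $\sigma_j(\omega_k)=c\sigma_i(\omega_k)$ extends by $\QQ$-linearity to all of $K$, which forces $c=1$ upon evaluation at $1\in K$ and hence $\sigma_i=\sigma_j$. Therefore the three intersections $P_{ij}=\{L_i=L_j=0\}\subset\PP^2$ are distinct; they are precisely the singular points of the plane cubic $\{N=0\}$, and the set $\{P_{12},P_{13},P_{23}\}$ is a single Galois orbit of size three.

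Next I would embed each $P_{ij}$ into $\PP^{n-1}$ by adjoining zeros in the coordinates $x_4,\ldots,x_n$, and verify that the resulting points are singular on $X$. The partial derivatives $\partial_kC$ for $k\in\{1,2,3\}$ coincide with $\partial_kN$ and vanish at $P_{ij}$ by construction; the partials $\partial_kC=\partial_kC_2$ for $k\ge 4$ are homogeneous of degree $2$ in $x_4,\ldots,x_n$ and therefore vanish when those coordinates are zero. This realises the three Galois-conjugate singular points on $X$ required by \cite{c-t-s}, concluding the argument.

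I do not foresee a genuine obstacle here. The singularity check reduces to the observation that $C_2$ contains no terms of degree less than $3$ in $x_4,\ldots,x_n$, and distinctness of the three $P_{ij}$ is forced by the basis property of $\omega_1,\omega_2,\omega_3$. Everything else is a direct appeal to results already cited in the excerpt, in keeping with the author's description of this theorem as an ``easy consequence''.
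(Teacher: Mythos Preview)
Your approach is correct and is exactly what the paper intends: reduce to the Colliot-Th\'el\`ene--Salberger Hasse principle for cubic hypersurfaces containing a conjugate triple of singular points, and then invoke automatic local solvability for cubic forms in at least $10$ variables. The paper leaves the construction of the conjugate singular triple implicit; you have spelled it out.

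One small logical slip worth fixing: pairwise non-proportionality of $L_1,L_2,L_3$ does \emph{not} by itself guarantee that the three points $P_{ij}$ are distinct---three concurrent but pairwise distinct lines in $\PP^2$ yield a single common point. What you actually need (and what you allude to in your closing paragraph) is that $L_1,L_2,L_3$ are linearly independent, which holds because the matrix $(\sigma_i(\omega_j))_{i,j}$ has nonzero determinant whenever $\omega_1,\omega_2,\omega_3$ is a $\QQ$-basis of $K$.
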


It turns out that Theorem \ref{main'''} will play a
useful r\^ole in dispatching some of the cases that arise in the proof
of Theorem \ref{main}.   Following the strategy of 
Birch, Davenport and Lewis \cite{BDL}, 
it would however be straightforward to adapt the proof of Theorem~\ref{main}
to retrieve Theorem \ref{main'''}.

An obvious further line of enquiry would be to investigate cubic hypersurfaces that split
off two forms, by which we mean that the corresponding cubic form can be written
as 
$$
C(x_1,\ldots,x_n)=C_1(x_1,\ldots,x_\ell)+C_2(x_{\ell+1},\ldots,x_m)+C_3(x_{m+1},\ldots,x_n),
$$
identically in $x_1,\ldots,x_n$, for appropriate $1\leq \ell< m <n$.  With the extra structure apparent in
such hypersurfaces one would like to determine the most
general conditions possible under which the conjectured value of $n\geq 10$ variables suffices to
ensure the existence of $\QQ$-rational points.

One of the remarkable features of our argument is
the breadth of tools that it draws upon. The underlying machinery is
the Hardy--Littlewood circle method,  and we certainly take advantage
of many of the contributions to the theory
of polynomial cubic exponential sums that have been made during the
last fifty years.  These are detailed in \S \ref{sec:exp_sums}.
A further component of our work 
involves a detailed analysis of the case in which one of the forms
that splits off in Theorem \ref{main} is singular and has a 
relatively small number of variables. To deal with this scenario 
it pays to reflect upon the classification of singular cubic hypersurfaces. This is a very old
topic in algebraic geometry, and can be traced back to the pioneering
work of Cayley \cite{cayley} and Schl\"afli \cite{cayley'}.
All of the necessary information will be collected together in \S \ref{s:geom}.
The final ingredient in  our work comprises good upper bounds for the
number of $\QQ$-rational points of bounded height on auxiliary cubic
hypersurfaces.  The estimates that we will take advantage of are
presented in \S \ref{sec:upper}.

When it is applicable, the Hardy--Littlewood circle method allows
us to show that $X(\QQ)\neq\emptyset$ for a given cubic hypersurface $X\subset
\PP^{n-1}$ by evaluating asymptotically the number of $\QQ$-rational points of bounded
height on $X$. It is a well-known but intriguing feature of the method that 
one can achieve such precise information by first establishing weaker
upper bounds for the growth rate of $\QQ$-rational points on
appropriate auxiliary varieties. 
In fact, we will show in Lemma \ref{lem:8} that the 
$\QQ$-rational points on a non-singular cubic
hypersurface $X\subset \PP^{n-1}$ satisfy the growth bound
$$
\#\{x\in X(\QQ): H(x)\leq P\} \ll_{\ve, X} P^{\dim X-\frac{1}{2}+\ve},
$$
for any $P\geq 1$,  provided that $\dim X\geq 6$.  
Here $H: \PP^{n-1}(\QQ)\rightarrow \RR_{\geq 0}$ is the
usual exponential height function.  This should be compared with the
Manin conjecture \cite{f-m-t} which predicts that the 
exponent of $P$ should be  $\dim X-1$ as soon as $\dim X\geq 3$.

\begin{notat}
Throughout our work $\NN$ will denote the set of positive
integers.  For any $\al\in \RR$, we will follow common convention and
write $e(\al):=e^{2\pi i\al}$ and $e_q(\al):=e^{2\pi i\al/q}$. The
parameter $\ve$ will always denote a very small positive real
number.  We will use $|\x|$ to denote the norm $\max |x_i|$ 
of a vector $\x=(x_1,\ldots,x_n)\in \RR^n$, whereas $\|\x\|$ will
be reserved for the usual Euclidean norm $\sqrt{x_1^2+\cdots+x_n^2}$. 
All of the implied constants that appear in this 
work will be allowed to
depend upon the coefficients of the cubic forms under
consideration and the
parameter $\ve>0$.  Any further dependence will be explicitly
indicated by appropriate subscripts.
\end{notat}

\begin{ack}
It is a pleasure to thank 
Professor Colliot-Th\'el\`ene, Professor Heath-Brown, Professor Salberger and 
Professor Wooley for several useful discussions.  
While working on this paper the author was supported by EPSRC
grant number \texttt{EP/E053262/1}.
\end{ack}

\section{Geometry of singular cubic hypersurfaces}\label{s:geom}

The proof of Theorem \ref{main} will depend intimately on the  
dimension of the hypersurfaces defined by the constituent cubic
forms, and the nature of their singularities. 
A key step will be to determine conditions on this
singular locus under which the hypersurface 
automatically has rational points.

Let $C\in \ZZ[x_1,\ldots,x_n]$ be an arbitrary cubic form, which
we assume takes the shape
\begin{equation}
  \label{eq:C}
C(\x):=\sum_{i,j,k}c_{ijk}x_ix_jx_k,
\end{equation}
in which the coefficients $c_{ijk}\in \ZZ$ are symmetric in the indices $i,j,k$.
Define the $n\times n$ matrix $M(\x)$ with $j,k$-entry
$\sum_i c_{ijk}x_i.$  We will say that the cubic form $C$ is ``good'' if 
for any $H\geq 1$ and any $\ve>0$ we have the upper bound
$$
\#\{\x\in\ZZ^{n}: |\x|\leq H,\,\rank M(\x)=r\}\ll H^{r+\ve},
$$
for each integer $0\leq r\leq n$.  A crucial step in Davenport's \cite{dav-16}
treatment of general cubic forms is a proof of the fact that forms
that fail to be good automatically possess non-trivial integer
solutions for ``geometric reasons''.  
Our approach has a similar flavour, although the underlying arguments will
be more obviously geometric.

Assume throughout this section that $n \geq 3$ and $X\subset
\PP^{n-1}$ is a hypersurface defined by a cubic form $C\in \ZZ[x_1,\ldots,x_n]$.
A lot of the facts
that we will record are classical. 
Suppose for the moment that $C$ is not absolutely irreducible. Then
either it has a linear factor $L$ defined over $\QQ$, or it is a product
$C=L_1L_2L_3$ of three linear factors that are conjugate over $\cQ$.
By considering the equation $L=0$ in the former case, 
we deduce that $X(\QQ)\neq\emptyset$. 
In the latter case, we arrive at the same conclusion when $n\geq 4$, by
considering the system of equations $L_1=L_2=L_3=0$. 
When $n=3$ and $C$ is a product of three conjugate factors we deduce
that $X$ has precisely three conjugate singular points.  
When $n\geq 3$ and $X$ is defined by an absolutely
irreducible cubic form, but is a cone, we note that the space of
vertices on $X$ must be a linear space globally defined over $\QQ$.
Thus $X(\QQ)\neq \emptyset$ in this case too. 
We have therefore established the following simple result.

\begin{lemma}\label{lem:p1.1}
Let $n \geq 4$. If $X$ is not geometrically integral, or if
$X$ is a cone, then $X(\QQ)\neq \emptyset$. When $n=3$ the same
conclusion holds unless $X$ contains precisely three conjugate singular points.
\end{lemma}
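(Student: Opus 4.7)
The plan is to go through the cases exactly as signposted in the paragraph immediately preceding the lemma, supplying the small amount of algebraic justification that is left implicit there.

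First I would treat the failure of absolute irreducibility. A cubic form over $\QQ$ that is not absolutely irreducible either acquires a $\QQ$-rational linear factor, or else splits over $\cQ$ as a product $L_1L_2L_3$ of three linear forms that are permuted transitively by $\Gal(\cQ/\QQ)$. In the first case, the vanishing of a $\QQ$-rational linear factor cuts out a projective linear subspace of dimension $n-2\ge 2$ inside $X$, giving $X(\QQ)\ne\emptyset$ trivially. In the second case, the intersection $\{L_1=L_2=L_3=0\}$ is a linear subspace of $\PP^{n-1}$ which, being the common zero locus of a Galois-stable system of linear forms, descends to a linear subspace defined over $\QQ$; its dimension is at least $n-4\ge 0$ when $n\ge 4$, so it contains a $\QQ$-point, and this point lies on $X$.

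Next I would deal with the cone case, assuming $C$ is absolutely irreducible. The vertex locus $V\subset X$ is the linear subspace cut out by the partial derivatives $\partial C/\partial x_i$. These derivatives have coefficients in $\ZZ$, so $V$ is a linear subspace defined over $\QQ$. By hypothesis $V$ is non-empty, so $V(\QQ)\ne\emptyset$, and $V\subseteq X$ gives the desired rational point.

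Finally, for the case $n=3$, the cone alternative does not arise (a cubic form in three variables defining a cone would be a cube of a linear form, hence not absolutely irreducible and already covered), and the first case above still applies verbatim when $C$ has a $\QQ$-rational linear factor. The only outstanding possibility is that $C=L_1L_2L_3$ with $L_1,L_2,L_3$ conjugate over $\cQ$; here the intersection $L_i\cap L_j$ for $i\ne j$ consists of a single point in $\PP^{2}$, and these three points are pairwise conjugate, singular on $X$, and exhaust $\sing(X)$. This is the residual configuration permitted in the statement.

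No step presents a genuine obstacle; the only point requiring a touch of care is the Galois descent argument in the ``three conjugate linear factors'' case, where one needs the intersection of the three $\cQ$-hyperplanes to be defined over $\QQ$. This follows from the observation that the $\Gal(\cQ/\QQ)$-orbit of the ideal $(L_1,L_2,L_3)$ is itself, so the subscheme it defines is $\QQ$-rational.
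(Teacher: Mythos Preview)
Your approach matches the paper's essentially verbatim, but there are two slips in the justifications that you should correct.

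First, in the cone case you write that the vertex locus $V$ is ``the linear subspace cut out by the partial derivatives $\partial C/\partial x_i$''. That common zero locus is the \emph{singular} locus, which is cut out by quadratic forms and need not be linear (and is generally larger than $V$). The vertex locus is instead the set of $v$ for which $\sum_i v_i\,\partial_i C \equiv 0$ identically as a polynomial; equivalently, after a linear change of variables moving $v$ to a coordinate vector, $C$ does not involve that variable. This is a system of \emph{linear} conditions on $v$ with $\QQ$-coefficients, so $V$ is indeed a $\QQ$-linear subspace and your conclusion stands --- but the reason you gave is wrong.

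Second, for $n=3$ you claim that an absolutely irreducible plane cubic cannot be a cone because such a cone ``would be a cube of a linear form''. That is false: after placing the vertex at $[0:0:1]$, the form becomes a binary cubic in $x_1,x_2$, which factors over $\cQ$ into three linear forms that need not coincide (e.g.\ $x_1^3-x_2^3$). The correct observation is simply that a binary cubic is never absolutely irreducible, so this case is indeed absorbed by your earlier analysis. The parenthetical justification should be amended accordingly.

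With these two corrections, your argument is complete and coincides with the paper's.
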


Recall that a cubic hypersurface $X$ is said to 
be non-singular if over $\cQ^n$ the only solution to the
system of equations $\nabla C(\x)=\mathbf{0}$ has $\x=\mathbf{0}$. 
Henceforth we will be predominantly interested in 
singular cubic forms, and then only in the cases $n=3,4$ and $5$.
Let $k$ be a field. It has been conjectured by 
Cassels and Swinnerton-Dyer that any cubic hypersurface $X \subset \PP^{n-1}$ defined over $k$ that contains a
$k$-rational $0$-cycle of degree coprime to $3$, automatically has
a $k$-rational point.  The case $n=3$ goes back to Poincar\'e. When
the singular locus is non-empty, the case $n=4$  can be 
deduced from the work of Skolem \cite{skolem}. A comprehensive
discussion of the arithmetic of singular cubic surfaces can be found
in the work of Coray and Tsfasman \cite{c-t}. 
Coray \cite{coray76b} has established the conjecture for all local
fields and,  in a subsequent investigation \cite[Proposition 3.6]{coray87}, has also dispatched
the case in which  $n=5$ and the $0$-cycle is made up of double points.

Suppose first that $n=3$, so that $X\subset \PP^2$ defines a cubic
curve, which we assume to be geometrically integral and not a cone. 
When $X$ is singular it contains exactly one 
singular point, which must therefore be defined over $\QQ$. Once
combined with Lemma \ref{lem:p1.1} we arrive at the following result.

\begin{lemma}\label{lem:curve}
Let $n=3$ and suppose that $X(\QQ)= \emptyset$. Then 
\begin{enumerate}
\item[(i)]
$X$ is non-singular; or
\item[(ii)]
$X$ contains precisely three conjugate singular points. 
\end{enumerate}
\end{lemma}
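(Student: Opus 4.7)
The plan is to combine Lemma~\ref{lem:p1.1} with a standard application of B\'ezout's theorem. Since we have assumed $X(\QQ) = \emptyset$, the $n=3$ clause of Lemma~\ref{lem:p1.1} leaves only two possibilities: either $X$ is geometrically integral and not a cone, or $X$ contains precisely three conjugate singular points. The second possibility is conclusion~(ii), so I may restrict attention to the case where $X$ is a geometrically integral plane cubic curve and not a cone, and the task reduces to showing that such an $X$ must be non-singular.

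I claim that any geometrically integral plane cubic has at most one singular point over $\cQ$. Indeed, any singular point has multiplicity at least $2$ on $X$, so if $X$ contained two distinct singular points, the line $L$ joining them would meet $X$ with total intersection multiplicity at least $4$. But since $X$ is irreducible and $L$ is not a component of $X$, B\'ezout's theorem forces the intersection number to equal $\deg X \cdot \deg L = 3$, a contradiction.

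If $X$ is singular, it therefore has a unique singular point over $\cQ$. The singular locus is cut out in $\PP^2$ by the vanishing of $C$ and its partial derivatives, and so is defined over $\QQ$ and stable under $\Gal(\cQ/\QQ)$. A set consisting of a single geometric point that is Galois-stable must be fixed pointwise, so the singular point is $\QQ$-rational, contradicting $X(\QQ) = \emptyset$. Hence $X$ is non-singular, which is conclusion~(i).

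I do not anticipate any serious obstacle here: essentially all of the content is already packaged into Lemma~\ref{lem:p1.1}, and the remaining argument is the elementary B\'ezout bound together with the Galois-descent observation above. If there is a subtlety to check, it is only in confirming that the dichotomy offered by Lemma~\ref{lem:p1.1} in the $n=3$ case is genuinely exhaustive, which was already verified in the preceding discussion in the paper.
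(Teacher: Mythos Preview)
Your argument is correct and matches the paper's approach exactly: reduce via Lemma~\ref{lem:p1.1} to the geometrically integral, non-conical case, then observe that a singular geometrically integral plane cubic has a unique singular point, which is therefore $\QQ$-rational. The only difference is that you spell out the B\'ezout justification for uniqueness of the singular point, whereas the paper simply asserts it.
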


In case (ii) of Lemma \ref{lem:curve} one concludes that
the underlying cubic form can be written as a norm form \eqref{eq:NORM},
for appropriate $\omega_1,\omega_2,\omega_3 \in K$, 
where $K$ is the cubic number field obtained by adjoining one
of the singularities.

We turn now to the case $n=4$ of cubic surfaces 
$X\subset \PP^3$, which we suppose to be geometrically integral and
not equal to a cone. Suppose that $X$ is singular. The classification
of such cubic surfaces can be traced back to Cayley \cite{cayley} and Schl\"afli \cite{cayley'}, but we
will employ the modern treatment found in the work of Bruce and
Wall \cite{b-w}. In particular the singular locus of $X$ is either a
single line, in which case $X$ is ruled, or else it contains $\delta\leq 4$
isolated singularities and these are all rational double points. 
It follows that $X(\QQ)\neq \emptyset$ unless $\delta=3$ and the
three singular points are conjugate to each other over a cubic extension of $\QQ$.
In this final case, Skolem \cite{skolem} showed that
$C$ can be written as
\begin{equation}
  \label{eq:4_norm}
\Norm_{K/\QQ}(x_1\omega_1+x_2\omega_2+x_3\omega_3)+ax_4^2\Tr_{K/\QQ}(x_1\omega_1+x_2\omega_2+x_3\omega_3)+bx_4^3,
\end{equation}
for appropriate coefficients $\omega_1,\omega_2,\omega_3 \in K$ and
$a,b \in \ZZ$, where $K$ is the cubic number field obtained by adjoining one
of the singularities to $\QQ$. In terms of the classification over $\cQ$ 
according to singularity type, the 
only possibility here is that $X$ has
singularity type $3\mathbf{A}_i$ for $i=1$ or $2$, since the action of
$\Gal(\cQ/\QQ)$ preserves the singularity type.
Bringing this all together, we have therefore established the
following analogue of Lemma \ref{lem:curve}.

\begin{lemma}\label{lem:surface}
Let $n=4$ and suppose that $X(\QQ)= \emptyset$. Then 
\begin{enumerate}
\item[(i)] 
$X$ is non-singular; or
\item[(ii)]
$X$ contains precisely three conjugate double points. 
\end{enumerate}
\end{lemma}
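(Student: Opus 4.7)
The plan is to mirror the argument of Lemma \ref{lem:curve}, invoking the Bruce--Wall classification in place of the elementary classification of plane cubic curves, and accounting for the more elaborate Galois orbit structure generated by up to four isolated singular points. First I would apply Lemma \ref{lem:p1.1}, which is valid for $n \geq 4$, to reduce to the case in which $X$ is geometrically integral and not a cone. Assuming further that $X$ is singular (otherwise (i) holds), the classification cited from \cite{b-w} leaves two sub-cases to handle: either $\sing(X)$ is a line, or $\sing(X)$ is a finite set of $\delta \leq 4$ rational double points.

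In the first sub-case, $\sing(X)$ is a closed subscheme of $\PP^3$ defined over $\QQ$, hence the line itself is defined over $\QQ$ and contains abundant rational points. In the second sub-case I would analyse the orbit of $\Gal(\cQ/\QQ)$ on the finite set $\sing(X)$. A key geometric observation to exploit repeatedly is that any line $L$ joining two distinct double points of $X$ must be contained in $X$: since $L$ meets $X$ with local multiplicity at least $2$ at each double point, the total intersection multiplicity of $L \cdot X$ is at least $4 > 3 = \deg X$, forcing $L \subset X$.

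Using this, every $\Gal(\cQ/\QQ)$-orbit of size $1$ on $\sing(X)$ produces a $\QQ$-rational singular point, and every orbit of size $2$ produces a $\QQ$-rational line contained in $X$; in both situations $X(\QQ) \neq \emptyset$. A routine orbit bookkeeping under these constraints reveals that $X(\QQ) = \emptyset$ is compatible with only two remaining configurations: $\delta = 3$ with a single $\Gal(\cQ/\QQ)$-orbit of size $3$, which is precisely case (ii); and $\delta = 4$ with a single transitive orbit of size $4$. In the latter configuration, the formal sum of the four singular points is a $\QQ$-rational $0$-cycle on $X$ of degree $4$, which is coprime to $3$, and Skolem's theorem \cite{skolem} recalled in the preceding discussion then yields a $\QQ$-rational point of $X$, contrary to our hypothesis.

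The main obstacle is the $\delta = 4$ transitive case, where the geometric line construction cannot be applied directly and the input of Skolem's $0$-cycle theorem is indispensable. Verifying that this last configuration is indeed ruled out in this way, together with the verification that the line-through-two-double-points argument is being correctly invoked at each stage of the orbit analysis, constitute the only non-formal content; the remainder is case-checking.
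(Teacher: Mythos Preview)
Your proposal is correct and follows essentially the same route as the paper. The paper is terser, writing only ``It follows that $X(\QQ)\neq\emptyset$ unless $\delta=3$ and the three singular points are conjugate'', whereas you spell out the orbit bookkeeping; your direct line-through-two-double-points argument for size-$2$ orbits is a pleasant geometric shortcut, but since you still need Skolem's $0$-cycle result for the transitive $\delta=4$ configuration, the overall dependence on \cite{skolem} is the same as in the paper.
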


We now try to construct a version of Lemmas \ref{lem:curve} and
\ref{lem:surface} for the case $n=5$. 
Let $Y\subset X$ denote the singular locus of $X\subset \PP^4$, a variety of
dimension at most $2$.  As usual we assume that $X$ is geometrically
integral and not a cone.  
We analyse $Y$ by considering the intersection of $X$ with a generic
hyperplane $H\in {\PP^4}^*$.  
In particular the hyperplane section 
$$
S_H=H \cap X
$$ 
is a geometrically integral cubic surface which is not a cone (see
\cite[Proposition 18.10]{harris}, for example).
In taking $H$ to be defined over $\QQ$, we may further assume that
$S_H$ is defined over $\QQ$. Any $\QQ$-rational point on $S_H$ visibly
produces a $\QQ$-rational point on $X$. 
Let $T_H  \subset S_H$ denote the singular locus of $S_H$. 
Then the classification of cubic surfaces 
implies that $T_H$ is either empty or it is a union of $\delta_H\leq
4$ points or it is a line. 
When $T_H$ is non-empty it follows from Lemma~\ref{lem:surface}
that either $S_H(\QQ)\neq \emptyset$ or else $T_H$ is finite, with 
$\delta_H=3$ and the three points being conjugate
to each other over $\cQ$.

Now an application of Bertini's theorem (in the form given by Harris 
\cite[Theorem~$17.16$]{harris}, for example) shows that
$$
H\cap Y =T_H.
$$
When $S_H$ is non-singular it therefore follows that $H\cap Y$ is
empty for generic   $H\in {\PP^4}^*$, whence $Y$ must be finite. In
the alternative case, when
$S_H$ is singular, we may conclude that $\#(H\cap Y)=3$ for 
generic   $H\in {\PP^4}^*$, whence the maximal component of $Y$
is a cubic curve.

Let us examine further the possibility that the singular locus $Y$ of
$X$ has dimension $1$, and that it contains a cubic curve $Y_0$ as its
component of maximal dimension. Clearly $Y_0$ is defined over $\QQ$.
Furthermore,  we may conclude from B\'ezout's theorem that the line connecting
any two points of $Y_0$ must be contained in $X$, since each such
point is a singularity of $X$.

If $Y_0$ is reducible over $\QQ$ then there are two
basic possibilities: either it is a union of lines or it is a union of
a conic and a line.  In the latter case $Y_0$ contains a line defined
over $\QQ$ and it trivially follows
that $Y_0(\QQ)\neq \emptyset$.  The former case fragments into a number
of subcases: either it is a union of $3$ concurrent lines,  or it contains
a pair of skew lines, or it is a union of $3$ coplanar lines, or it
contains a repeated line. The second case is 
impossible since then the join of the two skew lines defines a
$3$-plane  that would also be contained in $X$, contradicting the fact that $X$ is geometrically
irreducible.  It follows from consideration of the Galois
action on $Y_0$ that $Y_0(\QQ)\neq \emptyset$ in every case apart from
the one in which $Y_0$ is a union of $3$ coplanar lines.

If $Y_0$ is geometrically irreducible then it cannot be a twisted
cubic since then the secant variety $S(Y_0)\cong
\PP^3$ would be contained in $X$. 
Our argument so far has shown that either $Y_0(\QQ)\neq \emptyset$ for trivial
reasons, or else $Y_0$ is a cubic plane curve that is either geometrically
irreducible or a union of $3$ distinct lines.
The plane $P$ containing $Y_0$ is defined over $\QQ$ and, after carrying out a
linear change of variables, we may take
$x_1=x_2=0$ as its defining equations. But then it follows
that the cubic form defining $X$ can be written  
$$
x_1Q_1(x_1,\ldots, x_5)+x_2Q_2(x_1,\ldots, x_5),
$$
for appropriate quadratic forms $Q_1,Q_2$ defined over $\ZZ$. 
With this notation one sees that $Y$ is the locus of
solutions to the system of equations 
$$
x_1=x_2=Q_1(0,0,x_3,x_4,x_5)=Q_2(0,0,x_3,x_4,x_5)=0,
$$
in $\PP^4$. It is now clear that 
the component $Y_0$ of $Y$ of maximal
dimension cannot be a cubic plane curve of the two remaining types.

It remains to deal with the case in which the singular locus $Y$ of $X$
is finite and globally defined over $\QQ$. As shown by C. Segre
\cite{segre}, we have $\delta=\#Y\leq 10$, the extremal
case of $10$ singular points being achieved by the so-called
Segre threefold.  Since $X$ is assumed not to be a cone so we may
assume that all of the singularities are double points. Indeed any
singularity with multiplicity exceeding $2$ must be a vertex for
$X$. In fact, when $\delta\geq 6$ it is known \cite[Lemma 2.2]{6} that
all the singularities are actually nodal.  
Appealing to Coray's partial
resolution of the Cassels--Swinnerton-Dyer conjecture for threefolds,
we are now ready to record our analogue of Lemmas \ref{lem:curve} and \ref{lem:surface}.

\begin{lemma}\label{lem:3fold}
Let $n=5$ and suppose that $X(\QQ)= \emptyset$. Then 
\begin{enumerate}
\item[(i)]
$X$ is non-singular; or
\item[(ii)]
$X$ is a geometrically integral cubic
hypersurface whose singular locus contains precisely $\delta$ double
points, with 
$\delta\in \{3,6, 9\}$.
\end{enumerate}
\end{lemma}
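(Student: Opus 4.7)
The plan is to collect the geometric analysis carried out in the paragraphs preceding the lemma into a single deductive argument, so most of the heavy lifting is already done and I just need to orchestrate the cases. By Lemma \ref{lem:p1.1}, I may assume from the outset that $X\subset\PP^4$ is geometrically integral and not a cone, since otherwise $X(\QQ)\neq\emptyset$, contradicting the hypothesis. Let $Y\subset X$ be the singular locus and set $\delta=\#Y$ when $Y$ is finite. If $Y=\emptyset$ then $X$ is non-singular, putting us in case (i). Hence I may suppose $Y\neq\emptyset$, and the task is to establish that $Y$ is finite and that $\delta\in\{3,6,9\}$, with each singular point a double point.

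First I would rule out $\dim Y\geq 1$. Taking a generic hyperplane $H\in{\PP^4}^*$ defined over $\QQ$, the section $S_H=H\cap X$ is a geometrically integral cubic surface that is not a cone, and Bertini yields $T_H=H\cap Y$, where $T_H$ is the singular locus of $S_H$. Since $X(\QQ)=\emptyset$ forces $S_H(\QQ)=\emptyset$, Lemma \ref{lem:surface} leaves only two options: $T_H=\emptyset$, in which case a generic hyperplane misses $Y$ and so $Y$ is $0$-dimensional; or $T_H$ consists of three conjugate double points, in which case the maximal-dimensional component $Y_0$ of $Y$ is a cubic curve defined over $\QQ$. In the latter case, since any two points of $Y_0$ lie in $X$ as singularities, B\'ezout forces the line through them to lie in $X$. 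Running through the reducible and irreducible possibilities for $Y_0$ exactly as in the discussion preceding the lemma, every option either yields $Y_0(\QQ)\neq\emptyset$ (hence $X(\QQ)\neq\emptyset$), or forces a linear or higher-dimensional subspace inside $X$ that contradicts geometric integrality, or traps $Y_0$ as a cubic plane curve with a specific local equation for $X$ of the form $x_1Q_1+x_2Q_2$ whose singular locus cannot in fact take that shape. This rules out $\dim Y\geq 1$, so $Y$ must be finite.

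Now I would assemble the facts about the finite set $Y$. Since $X$ is not a cone, every point of $Y$ must be a double point, because a singularity of multiplicity $\geq 3$ would be a vertex. By the result of C.~Segre \cite{segre} we have $\delta\leq 10$, and for $\delta\geq 6$ the cited result \cite[Lemma 2.2]{6} ensures all singularities are nodal. The Galois group $\Gal(\cQ/\QQ)$ permutes the points of $Y$, yielding a $\QQ$-rational $0$-cycle on $X$ of degree $\delta$, consisting entirely of double points.

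The main obstacle, and the key external input, is ruling out the values of $\delta$ coprime to $3$. Here I would invoke Coray's partial resolution \cite[Proposition 3.6]{coray87} of the Cassels--Swinnerton-Dyer conjecture for $n=5$ with the hypothesis that the $0$-cycle is made up of double points: this produces a $\QQ$-rational point on $X$ whenever $\hcf(\delta,3)=1$. Since we are assuming $X(\QQ)=\emptyset$, we must have $3\mid\delta$, and combined with $1\leq\delta\leq 10$ this leaves exactly $\delta\in\{3,6,9\}$, placing us in case (ii) and completing the proof.
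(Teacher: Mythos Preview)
Your proposal is correct and follows the paper's argument essentially verbatim: the paper's proof of this lemma \emph{is} the discussion in the paragraphs preceding it, and you have faithfully reassembled that discussion---Lemma~\ref{lem:p1.1} to reduce to the geometrically integral non-cone case, the Bertini/hyperplane-section argument together with Lemma~\ref{lem:surface} to force $Y$ finite, the observation that all singular points are double points since $X$ is not a cone, Segre's bound $\delta\le 10$, and Coray's result on $0$-cycles of double points to exclude $\delta$ coprime to~$3$. There is no meaningful difference in approach.
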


In the second case of Lemma \ref{lem:3fold} it follows from
\cite{c-t-s} and \cite{6} that the Hasse principle holds for $X$ when
$\delta=3$ or $6$. Our investigation would be made easier if we were
also in possession of this fact when $\delta=9$.
Lacking this, all that we actually require from part (ii) of Lemma
\ref{lem:3fold} is that the singular locus should be finite. 
In his survey of open problems in Diophantine geometry, Lewis \cite{lewis}
reports on unpublished joint work with Blass, which would appear to give
Lemma \ref{lem:3fold}. However, in the absence of subsequent
elucidation, 
we have chosen to present our own proof of this result.

\section{Cubic exponential sums}\label{sec:exp_sums}

Let $C\in \ZZ[x_1,\ldots,x_n]$ be an arbitrary
cubic form, assumed to take the shape \eqref{eq:C}. 
Our work in this section centres upon various properties of the cubic
exponential sums
\begin{equation}\label{eq:Tcubic}
S(\al)=S_w(\al;C,P):=\sum_{\x\in\ZZ^n}w(P^{-1}\x)e(\al C(\x)),
\end{equation}
for a suitable family of weights $w$ on $\RR^n$, and cubic forms 
that are always either good (in the sense of the previous section) or the 
hypersurface they define has finite (possibly empty) singular locus.
Specifically, we will 
 collect together some general upper bounds for $S(\al)$, some estimates for suitable moments of $S(\al)$
and some asymptotic formulae for $S(\al)$ when suitable assumptions
are made about how $\al$ can be approximated by rational numbers.
All of these estimates will depend on the parameter $P$ which should
be thought of as tending to infinity.

We must begin by saying a few words about the weight functions that we
will be working with. 
Let $n_1,n_2\geq 0$ such that $n_1+n_2=n$. When $n_i\geq 1$ we let $\z_i\in\RR^{n_i}$
be certain vectors, which we think of as being fixed, but whose
nature will be determined later. Similarly we let $\rho>0$. 
All of the estimates in our work will be allowed to depend upon the
choice of $\z_1,\z_2$ and $\rho$. Define $w_1:\RR^{n_1}\rightarrow
\RR_{\geq 0}$, via 
\begin{equation}
  \label{eq:w1}
w_1(\x_1):=\exp(-\|\x_1-\z_1\|^2(\log P)^4),
\end{equation}
where we have written $\x_1=(x_1,\ldots,x_{n_1})$.
Let $P_0=P(\log P)^{-2}$. 
Then 
$$
w_1(P^{-1}\x_1)=
\exp(-\|\x_1-P\z_1\|^2 P_0^{-2})
$$
is exactly the weight function introduced by Heath-Brown
in \cite[\S 3]{hb-10}.  
Note that 
$$
\nabla w_1(\x_1)=-2(\log P)^4 w_1(\x_1)(x_1-z_1,\ldots,x_{n_1}-z_{n_1}),
$$
so that  $\nabla w_1(\x_1)\ll (\log P)^4$ for
any $\x_1\in\RR^{n_1}$.
Next we let $w_2:\RR^{n_2}\rightarrow \{0,1\}$ denote the 
characteristic function 
\begin{equation}
  \label{eq:w2}
  w_2(\x_2):=
\begin{cases}
1, &\mbox{if $|\x_2-\z_2|<\rho$,}\\
0, &\mbox{otherwise,}
\end{cases}
\end{equation}
where
$\x_2=(x_{n_1+1},\ldots,x_{n})$.

Each weight $w$ appearing in our work will either be of the
form $w_1$ or $w_2$ or $w=(w_1,w_2)$, depending on context. To help
distinguish which estimates are valid for which choice of weight
function, let us denote by $\WW_{n}^{(1)}$ the set of non-negative weight functions on
$\RR^n$  that are of the shape \eqref{eq:w1}, and let 
$\WW_n^{(2)}$ denote the corresponding set of weight functions on
$\RR^n$  of the type \eqref{eq:w2}. We let $\WW_n$ denote the set of
mixed functions $w=(w_1,w_2)$, with $w_i\in \WW_{n_i}^{(i)}$ for $i=1,2$.
In particular $\WW_{n}^{(i)}\subset \WW_n$ for $i=1,2$.
In the definition of these sets the precise value of $\z_1,\z_2$ or $\rho$
is immaterial, unless explicitly indicated otherwise, and the
corresponding implied constants will always be allowed to depend on these
quantities in any way. 

We are now ready to record the upper bounds for $S(\al)$ that feature
in our investigation. 

\begin{lemma}\label{lem:B}
Let $\ve>0$, let $w\in \WW_n$ and assume that $C\in \ZZ[x_1,\ldots,x_n]$ is a good cubic form.
Let $a,q\in \ZZ$ such that $0\leq a< q\leq P^{3/2}$ and
$\gcd(a,q)=1$. Then if $\al=a/q+\theta$ we have
$$
S(\al)\ll P^{n+\ve}\big( q|\theta| +(q|\theta|P^3)^{-1}\big)^{\frac{n}{8}}.
$$
If furthermore $|\theta|\leq q^{-1}P^{-3/2}$, then we have 
$$
S(\al)\ll P^{n+\ve}q^{-\frac{n}{8}}\min\{1,(|\theta|P^3)^{-\frac{n}{8}}\}.
$$
\end{lemma}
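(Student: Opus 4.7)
The plan is to derive both bounds via a standard Weyl differencing argument, tailored to exploit the \emph{good} hypothesis on $C$. Writing $C(\x)=\Phi(\x,\x,\x)$ in terms of the associated symmetric trilinear form, a direct computation gives
\begin{equation*}
\Delta_\z\Delta_\y C(\x)=6\,\x^{T}M(\y)\z+g(\y,\z)
\end{equation*}
for some polynomial $g$ independent of $\x$, where $M(\cdot)$ is the matrix appearing in the definition of goodness. Performing two rounds of Weyl differencing on $|S(\alpha)|^2$, with a Cauchy--Schwarz between them, reduces $|S(\alpha)|^{4}$ to a sum over $(\y,\z)$ of a linear exponential sum in $\x$ with frequency $6\alpha M(\y)\z$. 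Estimating the latter by the standard product-of-minima bound yields
\begin{equation*}
|S(\alpha)|^4 \ll P^{n+\ve}\sum_{|\y|,|\z|\ll P}\prod_{i=1}^{n}\min\bigl(P,\|6\alpha(M(\y)\z)_i\|^{-1}\bigr),
\end{equation*}
uniformly for $w\in\WW_n$; the Gaussian class $\WW_n^{(1)}$ is absorbed by truncating beyond the effective box of side $P_0\log P$, the indicator class $\WW_n^{(2)}$ being immediate.

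The next step is a Davenport-type counting lemma, of the kind used in \cite{dav-32} and \cite{14}, which converts the sum of product-minima into a weighted count of integer pairs $(\y,\z)$ with $|\y|,|\z|\ll P$ for which each component $(M(\y)\z)_i$ lies in a short arithmetic progression modulo $q$ dictated by the approximation $\alpha=a/q+\theta$. The density of the effective window shrinks as $q|\theta|+(q|\theta|P^3)^{-1}$ shrinks. To bound the resulting count I would stratify by the $\ZZ$-rank $r$ of $M(\y)$: the good hypothesis produces at most $P^{r+\ve}$ eligible $\y$ for each $r$, while each such $\y$ contributes at most a controlled number of $\z$ in $\ker M(\y)$ intersected with the box. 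Combining the stratified counts with the density estimate yields
\begin{equation*}
|S(\alpha)|^4 \ll P^{4n+\ve}\bigl(q|\theta|+(q|\theta|P^3)^{-1}\bigr)^{n/2},
\end{equation*}
and taking fourth roots gives the first claim. The second follows immediately, since $|\theta|\le q^{-1}P^{-3/2}$ forces $q|\theta|\le (q|\theta|P^3)^{-1}$, so the minimum is realised at $(q|\theta|P^3)^{-1}$ and factors as $q^{-n/8}\min\bigl(1,(|\theta|P^3)^{-n/8}\bigr)$ up to the $P^\ve$ factor.

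The main obstacle will be the second step, namely executing the counting lemma so as to extract the sharp exponent $n/2$ in the saving factor. This requires careful tracking of how the parameters $q$ and $\theta$ jointly constrain the vector $M(\y)\z$ componentwise, together with uniformity across the full range $q\le P^{3/2}$ and across both weight classes in $\WW_n$. The good hypothesis is designed precisely so that the rank stratification of $M(\y)$ costs only a factor of $P^\ve$; consequently the quality of the final estimate is controlled by the quality of the Davenport-type counting lemma alone, which is the technical core of the argument.
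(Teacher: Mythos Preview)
Your outline is the Davenport--Heath-Brown argument that the paper cites; the paper itself gives no independent proof, referring to \cite{dav-16}, \cite[\S 2]{14}, and \cite[\S 9]{41} for the adaptation to the weight classes $\WW_n$. The Weyl differencing, the passage to the bilinear count via a Davenport-type lemma, and the rank stratification using the good hypothesis are exactly the ingredients in those sources.

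There is one slip in your last paragraph: the second bound does not follow from the first as a formal consequence. Under $|\theta|\le q^{-1}P^{-3/2}$ you correctly observe $q|\theta|\le(q|\theta|P^3)^{-1}$, so the first inequality specialises to
\[
S(\al)\ll P^{n+\ve}(q|\theta|P^3)^{-n/8}=P^{n+\ve}q^{-n/8}(|\theta|P^3)^{-n/8}.
\]
When $|\theta|<P^{-3}$ this is strictly larger than the target $P^{n+\ve}q^{-n/8}$, and the trivial bound $|S(\al)|\ll P^n$ cannot supply the missing factor $q^{-n/8}$. What actually happens is that both estimates emerge from the \emph{same} counting step: after Weyl differencing and Davenport's shrinking lemma one reduces to counting pairs $(\y,\z)$ in a box of side $H$ with $M(\y)\z=\mathbf{0}$, where $H$ may be taken to be any integer in a range governed jointly by $q$, $\theta$ and $P$. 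The good hypothesis bounds this count by $H^{n+\ve}$, and the two displayed inequalities in the lemma correspond to different admissible regimes for $H$. The second bound requires the extra hypothesis $|\theta|\le q^{-1}P^{-3/2}$ precisely so that an $H$ producing the $q^{-n/8}\min\{1,(|\theta|P^3)^{-n/8}\}$ shape is admissible; it is not a corollary of the first bound.
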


\begin{proof}
This is the essential content of the investigation of Davenport
\cite{dav-16} into cubic forms in $16$ variables. 
The bounds are derived in a more succinct manner in
Heath-Brown \cite[\S 2]{14}. The fact that we are
working with exponential sums that are differently weighted
makes no difference to the validity of the argument, and the
reader may wish to consult \cite[\S 9]{41}, where the necessary
modifications can be found in the setting of quartic forms. 
\end{proof}

Define the complete exponential sum 
\begin{equation}
  \label{eq:complete}
S_{a,q}:=\sum_{\y \bmod{q}}e_q(aC(\y)),
\end{equation}
for any coprime integers $a,q$ such that $q>0$. It can easily  be
deduced from the proof of Lemma \ref{lem:B} that $S_{a,q}\ll
q^{7n/8+\ve}$ for any $\ve>0$, under the assumption that the cubic
form is good. The following improvement is due to Heath-Brown \cite[\S 7]{14}.

\begin{lemma}\label{lem:paris}
Let $\ve>0$  and assume that $C\in \ZZ[x_1,\ldots,x_n]$ is a good
cubic form. Then we have $S_{a,q}\ll q^{5n/6+\ve}$.
\end{lemma}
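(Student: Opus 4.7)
The strategy is to reduce to prime-power moduli via multiplicativity and then handle $S_{a,p^k}$ by a Hensel-type lifting combined with the good hypothesis on $C$. Since the Chinese remainder theorem gives $S_{a,q_1q_2}=S_{a_1,q_1}S_{a_2,q_2}$ whenever $\hcf(q_1,q_2)=1$, it suffices to prove the bound when $q=p^k$. I would handle the finitely many small primes --- for which the mod-$p$ reduction of $C$ degenerates --- by the trivial bound $|S_{a,p^k}|\le p^{kn}$ absorbed into the $\ve$, and concentrate on the generic primes.

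For $k=1$ I would square out and expand
\[
C(\x+\y)-C(\y) = C(\x) + \nabla C(\x)\cdot\y + 3\y^T M(\x)\y,
\]
so that the inner $\y$-sum is a classical Gauss sum of rank $\rank_p M(\x)$; its modulus is at most $p^{n-\rank_p M(\x)/2}$, vanishing when the linear term $\nabla C(\x)$ fails to lie in the column space of $M(\x)$. Stratifying $\x\bmod p$ by the rank of $M(\x)$ and invoking the mod-$p$ incarnation of the good hypothesis bounds $|S_{a,p}|^2\ll\sum_r p^r\cdot p^{n-r/2}\ll p^{3n/2+\ve}$, which already beats the target exponent.

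For $k\geq 2$ I would use Hensel lifting. Writing $\y=\mathbf{u}+p^{k-1}\mathbf{v}$ with $\mathbf{u}\bmod p^{k-1}$ and $\mathbf{v}\bmod p$, Taylor expansion gives $C(\y)\equiv C(\mathbf{u})+p^{k-1}\nabla C(\mathbf{u})\cdot\mathbf{v}\pmod{p^k}$, since the quadratic and cubic $\mathbf{v}$-terms pick up factors of $p^{2(k-1)}$ and $p^{3(k-1)}$, both divisible by $p^k$. Performing the $\mathbf{v}$-sum yields
\[
S_{a,p^k}=p^n\sum_{\substack{\mathbf{u}\bmod p^{k-1}\\ \nabla C(\mathbf{u})\equiv 0\bmod p}}e_{p^k}(aC(\mathbf{u})).
\]
One then iterates: at each stage Euler's identity $\nabla C(\mathbf{u})=3M(\mathbf{u})\mathbf{u}$ translates the singular-locus condition into a rank deficiency of $M(\mathbf{u})\bmod p^j$, and the good hypothesis controls the number of residual points through the rank stratification.

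The principal obstacle lies in balancing the iteration: each lifting step contributes a factor $p^n$ while constraining $\mathbf{u}$ to progressively deeper singular loci, and the good hypothesis controls only the integer rank of $M$, not its mod-$p^j$ rank. To secure the exponent $5/6$ rather than the weaker $7/8$ implicit in the proof of Lemma~\ref{lem:B}, one must combine the rank stratification with the Gauss-sum cancellation arising from the second-order Taylor term in $\mathbf{v}$ and carefully bookkeep the contributions of each rank stratum at every level of the lift. This optimisation constitutes the technical heart of the argument given in \cite[\S 7]{14}.
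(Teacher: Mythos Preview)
The paper does not give a proof of this lemma at all --- it simply records the bound as a citation of Heath-Brown \cite[\S 7]{14}. Your sketch is therefore already more detailed than what the paper supplies, and its overall architecture (multiplicativity in $q$, a squaring/Gauss-sum argument for prime modulus, Hensel lifting for higher prime powers, with the good hypothesis controlling the rank stratification) is indeed the skeleton of Heath-Brown's argument. Your closing sentence correctly identifies \cite[\S 7]{14} as the place where the optimisation is carried out.

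Two points are worth flagging. First, your $k=1$ calculation actually yields $|S_{a,p}|\ll p^{3n/4+\ve}$, stronger than the claimed $p^{5n/6}$; the bottleneck producing the exponent $5/6$ is entirely in the prime-power case $k\geq 2$, as you anticipate. Second, you invoke a ``mod-$p$ incarnation of the good hypothesis'', but the hypothesis as stated in the paper concerns integer points in a box, not residues. The passage from one to the other needs an argument: the rank-$\leq r$ locus $V_r\subset\AA^n$ is a variety over $\ZZ$, the good hypothesis forces $\dim V_r\leq r$, and hence for all but finitely many primes the reduction $V_r\bmod p$ has at most $O(p^r)$ points; the exceptional primes are absorbed into the implied constant. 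Similarly, your use of Euler's identity gives $M(\mathbf{u})\mathbf{u}\equiv 0$, which for $\mathbf{u}\not\equiv 0$ does force $\rank M(\mathbf{u})<n$, but the deeper iterations need to track how the successive Hensel constraints $\nabla C\equiv 0\bmod p^j$ interact with the stratification --- this is precisely the bookkeeping you defer to \cite{14}, and rightly so.
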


We now come to the real workhorse 
in our argument.  Given $R,\phi>0$ and $v>0$ we define
\begin{equation}\label{eq:sig}
\Sigma_v(R,\phi,\pm):=  
\sum_{R<q\leq 2R} \sum_{\substack{a\bmod{q}\\
    \gcd(a,q)=1}}\int_{\phi}^{2\phi} \big|S\big(\frac{a}{q}\pm t\big)\big|^v \d t.
\end{equation}
The following result provides an upper bound for this quantity.

\begin{lemma}\label{lem:D}
Let $\ve>0$, let $w\in \WW_n$ 
 and assume that $C\in \ZZ[x_1,\ldots,x_n]$ is a good cubic form.
Let $R,\phi>0$, with $R\leq P^{3/2}$ and $\phi\leq R^{-2}$.
Then for any $v\in [0,2]$ and any $H\in
[1,P]\cap \ZZ$ we have
\begin{align*}
\Sigma_v(R,\phi,\pm)\ll P^3
+R^2\phi^{1-\frac{v}{2}} \left(\frac{\psi_H P^{2n-1+\ve}}{H^{n-1}}F\right)^{\frac{v}{2}},
\end{align*}
where
$$
\psi_H:=\phi+\frac{1}{P^2H},
\quad
F:=1+(RH^3\psi_H)^{\frac{n}{2}}+\frac{H^n}{R^{\frac{n}{2}}(P^2\psi_H)^{\frac{n-2}{2}}}.
$$
\end{lemma}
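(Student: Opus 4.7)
The strategy is to reduce the general $v$-th moment to a mean-square estimate for $\Sigma_2$ via H\"older's inequality, and then to bound $\Sigma_2$ by Weyl differencing followed by Poisson summation.

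\emph{H\"older reduction.} Applying H\"older's inequality with conjugate exponents $2/v$ and $2/(2-v)$ first to the integral $\int_\phi^{2\phi}|S|^v \d t$ and then to the sum over the $O(R^2)$ pairs $(a,q)$, one finds
$$
\Sigma_v(R,\phi,\pm) \ll \phi^{1-v/2} R^{2-v}\, \Sigma_2(R,\phi,\pm)^{v/2}.
$$
It therefore suffices to establish the mean-square bound
$$
\Sigma_2(R,\phi,\pm) \ll P^3 + R^2 \cdot \frac{\psi_H P^{2n-1+\ve}}{H^{n-1}}\, F.
$$
Indeed, using the subadditivity $(a+b)^{v/2}\le a^{v/2}+b^{v/2}$ for $v\le 2$ together with the hypothesis $\phi \le R^{-2}$, the $P^3$ term contributes at most $\phi^{1-v/2} R^{2-v} P^{3v/2}\le P^{3v/2}\le P^3$, while the second contributes precisely $R^2\phi^{1-v/2}(\psi_H P^{2n-1+\ve} H^{-(n-1)} F)^{v/2}$, as required.

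\emph{Weyl differencing and Poisson summation.} To bound $\Sigma_2$, expand $|S(\al)|^2$ as a double sum and apply a van der Corput style smoothed shift by a box of side $H$, as in the cubic treatment of \cite[\S 2]{14} and the quartic analogue of \cite[\S 9]{41}. This produces
$$
|S(\al)|^2 \ll \frac{P^n}{H^n}\sum_{|\mathbf{h}|\le H}\bigl|T_\mathbf{h}(\al)\bigr|,
$$
where $T_\mathbf{h}(\al)=\sum_\x \tilde w(\x)\, e(\al\,\Delta_\mathbf{h} C(\x))$ and $\Delta_\mathbf{h} C(\x)=C(\x+\mathbf{h})-C(\x)$ is a quadratic form in $\x$ whose Hessian is proportional to $M(\mathbf{h})$. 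For each fixed $\mathbf{h}$ we apply Poisson summation in $\x$; with $\al=a/q\pm t$ the result is a dual sum over $\mathbf{m}\in\ZZ^n$ weighted by Gauss-sum factors in $a\bmod q$. The effective oscillation scale is exactly $\psi_H=\phi+1/(P^2 H)$: the first summand reflects the length of the $t$-interval, and the second reflects the residual width of the Gaussian cut-off after Poisson against the quadratic phase.

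\emph{Diophantine counting and main obstacle.} Summing over $a$ with $(a,q)=1$ produces Ramanujan-type sums that localise $\mathbf{m}$ modulo $q$, and integration over $t$ then pins down the scale of $\mathbf{m}$. The problem reduces to counting pairs $(\mathbf{h},\mathbf{m})$ satisfying size and congruence constraints, with three regimes contributing to match the three terms of $F$: the diagonal case $\mathbf{m}=\mathbf{0}$ yields the term $1$; the generic range, in which the stationary phase point of the Poisson integral lies in the support of the weight, yields $(RH^3\psi_H)^{n/2}$; and a Gaussian ``small $\mathbf{m}$'' regime yields $H^n/(R^{n/2}(P^2\psi_H)^{(n-2)/2})$. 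The principal technical obstacle is this final counting: one must stratify the $\mathbf{h}$-sum by $\rank M(\mathbf{h})$, invoke the goodness hypothesis on $C$ to bound the number of $\mathbf{h}$ of each rank $r$ by $H^{r+\ve}$, and combine this with the Poisson estimates in each $\mathbf{m}$-regime in such a way that the low-rank and trivial ($\mathbf{m}=\mathbf{0}$) contributions cohere into the $P^3$ error term.
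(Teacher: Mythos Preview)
Your proposal is correct and follows essentially the same route as the paper: the H\"older reduction $\Sigma_v\ll (R^2\phi)^{1-v/2}\Sigma_2^{v/2}$ is exactly what the paper does, and your sketched Weyl--van der Corput/Poisson treatment of $\Sigma_2$ is precisely Heath-Brown's argument, which the paper simply cites as \cite[Eqs.~(4.5) and (5.1)]{14} rather than re-deriving. The only cosmetic difference is that Heath-Brown's bound reads $\Sigma_2\ll \psi_H R^2\big(P^2H+P^{2n-1+\ve}H^{1-n}F\big)$, and the paper (like you) then checks that the $\psi_H R^2 P^2 H$ contribution is $\ll P^3$ under $R\le P^{3/2}$, $\phi\le R^{-2}$, $H\le P$.
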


\begin{proof}
It is clear that $\Sigma_0(R,\phi,\pm)\ll R^2\phi.$
Hence it follows from H\"older's inequality that
\begin{align*}
\Sigma_v(R,\phi,\pm)&\ll
(R^2\phi)^{1-\frac{v}{2}} \Sigma_2(R,\phi,\pm)^{\frac{v}{2}}.
\end{align*}
On employing Heath-Brown's estimate for
$\Sigma_2(R,\phi,\pm)$, 
which follows from \cite[Eqs. (4.5) and (5.1)]{14},
we therefore deduce that
\begin{align*}
\Sigma_v(R,\phi,\pm)\ll
(R^2\phi)^{1-\frac{v}{2}} \left(\psi_H R^2\Big( P^2H + \frac{P^{2n-1+\ve}}{H^{n-1}}F\Big)\right)^{\frac{v}{2}}.
\end{align*}
As in the deduction of Lemma \ref{lem:B}, the fact that we are
working with differently weighted exponential sums makes no
difference to the final outcome of the argument.

Using the fact that $R\leq P^{3/2}$ and  $\phi\leq R^{-2}$, with $H\leq P$,
it easily follows that the term involving $P^2H$ contributes
\begin{align*}
\ll 
(R^2\phi)^{1-\frac{v}{2}}(\phi R^2 P^3 +R^2)^{\frac{v}{2}}\ll 
R^2\phi P^{\frac{3v}{2}}+ R^2\phi^{1-\frac{v}{2}}\ll P^3,
\end{align*}
since $0\leq v\leq 2$. This completes the proof of the lemma. 
\end{proof}

Lemma \ref{lem:D} is based on an averaged version of van der Corput's
method and comprises the key innovation in the work of Heath-Brown
\cite{14} already alluded to. Although we have presented it in the
context of denominators $q$ and values of $\al=a/q\pm t$ restricted to
dyadic intervals, the general result consists of a bound for 
$\int |S(\al)|^2 \d\al$, where the integral is taken over a certain
set of minor arcs. For cubic forms in few variables
we will have better results available. When $n=1$ and $w\in \WW_1^{(2)}$, Hua's inequality \cite[Lemma 3.2]{dav-book} implies that
$$
\int_{0}^1|S(\al)|^{2^j}\d\al \ll P^{2^j-j+\ve},
$$
for any $j\leq 3$ . The following result is due to 
Wooley \cite{wooley}, and generalises this to binary forms.

\begin{lemma}\label{lem:hua_2}
Let $\ve>0$, let $w\in\WW_2^{(2)}$ and let 
$C\in \ZZ[x_1,x_2]$ be a binary cubic form, not of the shape
$a(b_1x_1+b_2x_2)^3$, for integers $a,b_1,b_2$. 
Then we have 
$$
\int_{0}^1|S(\al)|^{2^{j-1}}\d\al \ll P^{2^j-j+\ve},
$$
for any $j\leq 3$.
\end{lemma}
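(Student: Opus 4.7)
My plan is to follow Wooley's strategy, which adapts Hua's classical Weyl-differencing-plus-counting argument from univariate to binary cubics.

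For $j=1$, Cauchy--Schwarz immediately reduces to the case $j=2$: $\int_0^1|S(\al)|\d\al\le(\int_0^1|S(\al)|^2\d\al)^{1/2}\ll P^{1+\ve}$.

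For $j=2$, Parseval's identity converts $\int_0^1|S(\al)|^2\d\al$ into the lattice count $\sum_m r(m)^2$, where $r(m):=\#\{\x\in\ZZ^2:|\x|\ll P,\,C(\x)=m\}$. The hypothesis that $C$ is not of the form $a(b_1x_1+b_2x_2)^3$ allows us to invoke Thue's theorem (in its quantitative refinement due to Evertse and Heath--Brown) to conclude $r(m)\ll P^\ve$ uniformly for $m\ne 0$. The contribution of $m=0$ is controlled by the fact that $C=0$ defines a union of at most three lines through the origin, giving $r(0)\ll P$. Since $\sum_m r(m)\le P^2$, these give $\sum_m r(m)^2\ll r(0)^2+\max_{m\ne 0}r(m)\cdot\sum_{m\ne 0}r(m)\ll P^{2+\ve}$.

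The case $j=3$, requiring $\int_0^1|S(\al)|^4\d\al\ll P^{5+\ve}$, is the main obstacle. One expands the integral as $\sum_m s(m)^2$ where $s(m)=(r*r)(m)=\#\{(\x,\y):|\x|,|\y|\ll P,\,C(\x)+C(\y)=m\}$; the diagonal contribution $s(0)^2=(\sum_n r(n)^2)^2\ll P^{4+\ve}$ is acceptable from $j=2$ (noting $r(-n)=r(n)$ since $C$ is homogeneous of odd degree). For the off-diagonal, the plan is to apply Weyl differencing: writing $|S(\al)|^2=\sum_\mathbf{h} S_\mathbf{h}(\al)$ with phase $D_\mathbf{h}C(\x)=C(\x+\mathbf{h})-C(\x)=Q_\mathbf{h}(\x)+L_\mathbf{h}(\x)+C(\mathbf{h})$, where $Q_\mathbf{h}(\x):=3\Phi(\x,\x,\mathbf{h})$ is the quadratic part and $\Phi$ is the symmetric trilinear form polarising $C$. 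An $L^2$ mean-value expansion and a further differencing in $\x$ reduce the problem to counting quadruples $(\x,\mathbf{h},\x',\mathbf{h}')$ satisfying $D_\mathbf{h}C(\x)=D_{\mathbf{h}'}C(\x')$, ultimately in terms of lattice points on the linear system $\Phi(\x,\mathbf{h},\boldsymbol{\ell})=\text{const}$.

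The key point, and where the hypothesis enters crucially, is that the discriminant $\disc(Q_\mathbf{h})$---a binary quadratic form in $\mathbf{h}$---must not vanish identically. A direct polarisation computation shows that identical vanishing of $\disc(Q_\mathbf{h})$ forces $C=\lambda L^3$ for some linear form $L$, precisely the case we have excluded. Consequently $\disc(Q_\mathbf{h})=0$ cuts out at most two lines in $\AA^2$, contributing only $O(P)$ degenerate lattice values of $\mathbf{h}$ with $|\mathbf{h}|\ll P$; on the remaining $O(P^2)$ good values, each fibre $D_\mathbf{h}C(\x)=t$ is a non-degenerate affine conic in $\x$ with $O(P^\ve)$ integer solutions by Thue, while the degenerate $\mathbf{h}$ contribute at most $O(P)$ solutions per fibre. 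Assembling all of this, together with the $j=2$ bound applied at a final Cauchy--Schwarz step, delivers the required bound $\ll P^{5+\ve}$.
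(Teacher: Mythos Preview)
The paper does not give a proof of this lemma; it simply attributes the result to Wooley and cites his paper. So there is no ``paper's own proof'' to compare against beyond that citation.

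Your treatment of $j=1$ and $j=2$ is essentially fine, with one caveat: Thue's theorem (and the Evertse refinement) applies to binary forms with nonzero discriminant, i.e.\ with three distinct linear factors over $\overline{\QQ}$. The hypothesis here only excludes $C$ being the cube of a linear form, so $C$ could have a repeated factor (e.g.\ $C=x_1^2x_2$). For such forms the bound $r(m)\ll P^\ve$ still holds for $m\neq 0$, but it comes from an elementary divisor estimate rather than Thue. This is a terminological slip, not a real gap.

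The $j=3$ argument, however, does not go through as written. First, invoking ``Thue'' to bound integer points on the conic $D_{\mathbf h}C(\x)=t$ is wrong: Thue's theorem concerns forms of degree at least $3$. It is true that a non-degenerate binary quadratic equation has $O(P^\ve)$ integer solutions in a box of side $P$ (via the theory of Pell's equation or divisor bounds), but this needs a separate justification and uniformity in $\mathbf h$. Second, and more seriously, even granting that bound the sketch only yields $\int_0^1|S(\al)|^4\,\d\al\ll P^{6+\ve}$, not $P^{5+\ve}$. Whether you expand $|S|^4=\bigl|\sum_{\mathbf h}S_{\mathbf h}\bigr|^2$ directly and count quadruples $(\mathbf h,\x,\mathbf h',\x')$ with $D_{\mathbf h}C(\x)=D_{\mathbf h'}C(\x')$, or apply Cauchy--Schwarz to get $|S|^4\ll P^2\sum_{\mathbf h}|S_{\mathbf h}|^2$ and then count, fixing three of the four $2$-vectors and placing the last on a conic leaves $O(P^{6+\ve})$ configurations. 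The Cauchy--Schwarz step already throws away a factor $P^2$, and nothing in the subsequent counting recovers it; your ``final Cauchy--Schwarz step'' is not specified and I do not see one that closes the gap. Wooley's actual argument is more delicate (an inductive scheme in which the differencing is interleaved with the mean-value estimate, rather than a single Cauchy--Schwarz followed by pointwise counting), and that refinement is precisely what is missing here.
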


Our next selection of results concern the approximation of $S(\al)$ on a certain set of arcs in
the interval $[0,1]$.  For given $A,B,C\geq 0$, define 
$\mathcal{A}=\mathcal{A}(A,B,C)$ to be set of $\al\in [0,1]$ for
which there exists $a,q\in \ZZ$ such that $0\leq a< q\leq P^A$ and
$\gcd(a,q)=1$, with 
\begin{equation}
  \label{eq:Aqa}
\al\in \A_{q,a}:=\Big[\frac{a}{q}-\frac{1}{q^{B}P^{3-C}},  
\frac{a}{q}+\frac{1}{q^{B}P^{3-C}}\Big].
\end{equation}
The major arcs in our work will be a subset of these, but it will be
useful to maintain a certain degree of generality.
When dealing with cubic forms whose singular locus is very small, we
have rather good control over the approximation of $S(\al)$ on the
arcs $\A=\A(A,B,C)$, provided that we work with the
class of smooth weight functions $\WW_n^{(1)}$.  
Recall the definition of $S_{a,q}$ from \eqref{eq:complete} and let
$$
 I_w(\psi):=\int_{\RR^n}w(\x)e(\psi C(\x))\d\x,
$$
for $\psi\in\RR$.
We will need to work with the familiar quantity
\begin{equation}
  \label{eq:S*}
S^*(\al):=q^{-n}P^n S_{a,q}I_w(\theta P^3),
\end{equation}
concerning which we have the following result.

\begin{lemma}\label{lem:F}
Let $\ve>0$ and $n\geq 3$. Assume that
$C\in\ZZ[x_1,\ldots,x_n]$ is a good
cubic form defining a projective hypersurface that is not a cone, with singular locus of
dimension $\sigma\in \{-1,0\}$.
Let $A,B,C\geq 0$ such that $A<1$ and $B\in \{0,1\}$, and let $\al\in
\A_{q,a}$.  Then there exists $w\in\WW_n^{(1)}$  such that 
$$
S(\al)-S^*(\al)\ll P^{A(\frac{n}{3}+\frac{\sigma}{2})+\frac{n+1}{2}+\ve}
+P^{A(1-B)\frac{n+1+\sigma}{2}+C\frac{n+1}{2}+\ve}.
$$
Furthermore, if $kn\geq 12$ and $k\leq 9$, then we have 
$$
\int_\A |S^*(\al)|^k \d\al\ll P^{kn-3+\ve}.
$$
\end{lemma}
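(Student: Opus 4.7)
The plan is to handle the two assertions in turn using classical major-arc machinery. For the first assertion, the starting point is the decomposition $\x = q\y+\mathbf{r}$ with $\mathbf{r}$ ranging over residues modulo $q$. Writing $\al = a/q + \theta$ and using $C(q\y+\mathbf{r})\equiv C(\mathbf{r})\pmod q$ yields the factorisation
$$
S(\al)=\sum_{\mathbf{r}\bmod q} e_q(aC(\mathbf{r}))\sum_{\y\in\ZZ^n} w(P^{-1}(q\y+\mathbf{r})) e(\theta C(q\y+\mathbf{r})),
$$
and the inner $\y$-sum should be compared with the $\mathbf{r}$-independent integral $q^{-n}P^n I_w(\theta P^3)$, whose accumulation over $\mathbf{r}$ reconstructs $S^*(\al)$. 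To justify the approximation I would apply Poisson summation in $\y$: the trivial dual frequency contributes exactly the desired integral, while non-trivial frequencies are controlled by the Gaussian decay of $w\in\WW_n^{(1)}$ together with the oscillation of the phase $\theta C(q\y+\mathbf{r})$.

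The two resulting error terms have distinct origins. The first, $P^{A(n/3+\sigma/2)+(n+1)/2+\ve}$, captures the contribution of residues $\mathbf{r}\bmod q$ for which $M(\mathbf{r})$ is degenerate in rank: the ``good'' hypothesis on $C$ supplies the counting estimate $\#\{\mathbf{r}\bmod q: \rank M(\mathbf{r})=r\}\ll q^{r+\ve}$, and combining this with the geometric refinements made possible by the assumption $\dim\sing(X)\le\sigma$ (in the style of \cite{roth,hb-10}) delivers the $\sigma$-dependence. The second term, $P^{A(1-B)(n+1+\sigma)/2+C(n+1)/2+\ve}$, bounds the non-trivial dual-frequency contribution in the Poisson expansion, its dependence on $B$ and $C$ reflecting the permitted $\theta$-range $|\theta|\le q^{-B}P^{-3+C}$. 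The main obstacle here is to carry the bookkeeping uniformly across the two regimes $B\in\{0,1\}$ while retaining $\sigma$-dependence sharp enough for the applications envisaged in the rest of the paper.

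For the moment bound, I would substitute the explicit form of $S^*(\al)$ to majorise
$$
\int_\A |S^*(\al)|^k\,\d\al \ll \sum_{q\le P^A} q^{-kn}P^{kn}\sum_{\substack{a\bmod q\\ \gcd(a,q)=1}}|S_{a,q}|^k \int_{\RR}|I_w(\theta P^3)|^k\,\d\theta.
$$
Apply Lemma \ref{lem:paris} to gain $|S_{a,q}|^k\ll q^{5kn/6+\ve}$, and invoke the standard oscillatory-integral estimate $I_w(\psi)\ll\min(1,|\psi|^{-\kappa})$ for an exponent $\kappa=\kappa(n,\sigma)$ satisfying $k\kappa>1$ under the present hypotheses (the value of $\kappa$ being effective since $X$ is not a cone and has singular locus of dimension $\sigma\le 0$, while the restriction $k\le 9$ ensures the bound is applicable in the required range). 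The change of variable $\psi=\theta P^3$ then gives $\int_{\RR}|I_w(\theta P^3)|^k\,\d\theta\ll P^{-3}$. Summing over $a$ via $\phi(q)\le q$ leaves
$$
\int_\A |S^*(\al)|^k\,\d\al \ll P^{kn-3+\ve}\sum_{q\le P^A} q^{1-kn/6},
$$
and the hypothesis $kn\ge 12$ renders this series bounded, the logarithmic boundary case $kn=12$ being absorbed into $P^\ve$. This yields the asserted estimate $P^{kn-3+\ve}$.
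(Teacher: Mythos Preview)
Your treatment of the second assertion is essentially the paper's: bound $S_{a,q}$ by Lemma~\ref{lem:paris}, bound $I_w$ by \eqref{eq:bound_99} with decay exponent $(n-1)/2$, and observe that $kn\ge 12$ makes $\sum q^{1-kn/6}$ convergent while $k\le 9$ forces $k(n-1)/2\ge (12-9)/2>1$ so the $\theta$-integral converges. That part is fine.

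The first assertion, however, has a real gap. Your diagnosis of the two error terms is incorrect: \emph{both} arise from the non-trivial dual frequencies in the Poisson expansion, not from separate sources. After Poisson (as in \cite[Lemma~8]{hb-10}) one has
\[
S(\al)-S^*(\al)\ll 1+q^{-n}\sum_{1\le |\v|\ll V}|S_{a,q}(\v)|\,|J_w(\theta,q^{-1}\v)|,
\qquad V\asymp (\log P)^7 q(P^{-1}+|\theta|P^2),
\]
and the entire content of the proof lies in bounding $\mathcal{T}(V):=\sum_{1\le|\v|\ll V}|S_{a,q}(\v)|$ by $q^{(n+1+\sigma+\ve)/2}(V^n+q^{n/3})$. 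The $q^{n/3}$ term produces your first error and the $V^n$ term produces your second. Establishing this bound is the hard step and is not covered by the ``good'' hypothesis alone: the paper decomposes $q=bc^2d$ into squarefull and cubefull parts and imports the machinery of \cite[\S 5]{41} (Lemmas~7, 10--16 there), in particular the estimate $\mathcal{S}(\kappa V,a)\ll c^\ve d^{(1+\sigma)/2}V^n(1+c^2d/V^3)^{n/2}$, which is where the $\sigma$-dependence actually enters (via $\sum_{\mathbf{a}\bmod d}N_d(\mathbf{a})\ll d^{n+1+\sigma+\ve}$). Your sketch does not touch this.

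Two further omissions would block the argument. First, the weight $w\in\WW_n^{(1)}$ is not arbitrary: its centre $\z_1$ must be chosen so that the Hessian of $C$ is non-degenerate at $\z_1$ (existence from \cite[Lemma~26]{hooley2}), which is what makes the stationary-phase bound $J_w(\theta,\mathbf{w})\ll P^n(\log P)^{7n}(|\theta|P^3)^{-(n-1)/2}$ available. Second, the presence of any non-zero $\v$ forces $V\gg 1$; combined with $A<1$ this yields the lower bound $|\theta|\gg q^{-1}P^{-2}(\log P)^{-7}$, without which the negative power $|\theta|^{(1-n)/2}$ coming from $J_w$ cannot be controlled and the final bookkeeping collapses.
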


\begin{proof}
The proof of this result is based on the investigation carried out by
Heath-Brown \cite{hb-10} into non-singular cubic forms in $10$ variables.
One of the key ingredients in his approach is the Poisson summation
formula, and it is this part of the argument that we plan to take
advantage of. 

We begin by choosing $\z_1\in \RR^{n}$ to be a point at which the
matrix of second derivatives of $C$ has full rank 
at $\z_1$.  The existence of such a point follows from
the work of Hooley \cite[Lemma 26]{hooley2}. 
With this choice of $\z_1$ we now select $w$ to be the weight function
in \eqref{eq:w1}, which belongs to $\WW_n^{(1)}$. Let 
\begin{align*}
S_{a,q}(\v)&:=\sum_{\y \bmod{q}}e_q(aC(\y)+\v.\y),\\
J_w(\psi,\v)&:=\int_{\RR^n}w(P^{-1}\x)e(\psi C(\x)-\v.\x)\d\x,
\end{align*}
for any $\v\in \RR^n$, and let $\al=a/q+\theta \in \A_{q,a}$.
Then \cite[Lemma 8]{hb-10} yields
$$
S(\al)-S^*(\al)\ll 1+ q^{-n}\sum_{\substack{\v\in \ZZ^n\\1\leq |\v|\ll
  V}} S_{a,q}(\v)J_w(\theta,q^{-1}\v),
$$
where
$
V:=(\log P)^7 q (P^{-1}+|\theta|P^2),
$
and furthermore,
\begin{equation}
  \label{eq:bound_99}
J_w(\theta,\mathbf{w})\ll P^n(\log P)^{7n}\min \{1, (|\theta|P^3)^{-1}\}^{\frac{n-1}{2}},
\end{equation}
for any $\mathbf{w}\in \RR^n$.
The main  difference between what we have recorded here and the
statement of  \cite[Lemma 8]{hb-10} is that our definition of $S(\al)$
does not involve a summation over $a$. This
deviation makes no difference to the final outcome.
Note that once the existence of a suitable point $\z_1$ is established
for the definition of the weight function, the manipulations  involving
the exponential integral remain valid even when $C$ is singular. 

The summation over $\v$ in our upper bound for $S(\al)-S^*(\al)$
implies in particular $V\gg 1$. Since $A<1$ we automatically have 
$(\log P)^7 q P^{-1}\leq (\log P)^7P^{A-1}=o(1)$. 
 Hence the condition $V\gg 1$ implies that
$$
(\log P)^7 q |\theta|P^2\leq  V\ll (\log P)^7 q |\theta|P^2
$$
and 
\begin{equation}
  \label{eq:size}
q^{-1}P^{-2}(\log P)^{-7}\ll |\theta| \leq q^{-B}P^{-3+C}.  
\end{equation}
Putting everything together it follows that
$$
S(\al)-S^*(\al)\ll 1+ q^{-n}(\log
P)^{7n}P^{\frac{3-n}{2}}|\theta|^{\frac{1-n}{2}}\mathcal{T}(V),
$$
where
$$
\mathcal{T}(V):=\sum_{1 \leq |\v|\ll V} |S_{a,q}(\v)|.
$$
We will show that
\begin{equation}\label{eq:JB}
\mathcal{T}(V)\ll q^{\frac{n+1+\sigma+\ve}{2}}(V^n+ q^{\frac{n}{3}}),
\end{equation}
for $\sigma\in \{-1,0\}$. 
Before doing so let us see how this suffices to complete the proof of
the first part of the lemma.  Recalling from above that $V$ has
order of magnitude $(\log P)^7q|\theta|P^2$, 
and employing \eqref{eq:size}, we deduce that
\begin{align*}
S(\al)-S^*(\al)
\ll& q^{-\frac{n}{2}+\frac{1+\sigma}{2}+\frac{2\ve}{3}} P^{-\frac{n-3}{2}}|\theta|^{-\frac{n-1}{2}}
((q|\theta|P^2)^{n}+ q^{\frac{n}{3}})\\
\ll& q^{\frac{n+1+\sigma}{2}+\frac{2\ve}{3}}|\theta|^{\frac{n+1}{2}}P^{\frac{3(n+1)}{2}}+
q^{-\frac{n}{6}+\frac{1+\sigma}{2}+\frac{2\ve}{3}}|\theta|^{-\frac{n-1}{2}}P^{-\frac{n-3}{2}}\\
\ll& q^{(1-B)\frac{n}{2}-\frac{B}{2}+\frac{1+\sigma}{2}}P^{C\frac{n+1}{2}+\ve}
+P^{A(\frac{n}{3}+\frac{\sigma}{2})+\frac{n+1}{2}+\ve}.
\end{align*}
If $B=1$ then the first term here is $O(P^{C\frac{n+1}{2}+\ve})$,
since $\sigma\leq 0$.
Alternatively, if $B=0$, then the first term is 
$
O(P^{A\frac{n+1+\sigma}{2}+C\frac{n+1}{2}+\ve}).
$
This establishes the first part of the lemma subject to \eqref{eq:JB}.

To establish \eqref{eq:JB} we return to the manipulations in
\cite[\S 5]{41}. Things are simplified slightly by no longer needing
to keep track of the dependence on $C$ in each implied
constant.  In particular we may take $H\ll 1$ throughout.
The sum $S_{a,q}(\v)$ satisfies a basic multiplicativity property, as
recorded in \cite[Lemma~10]{41}. 
Write $q=bc^2d$, where 
$$
b:=\prod_{\substack{p^e\| q\\ e\leq 2}} p^e, \quad
d:=\prod_{\substack{p^e\| q\\ e\geq 3, 2\nmid e}} p.
$$
In particular $d\mid c$ and we deduce from \cite[Lemmas~7, 10 and 11]{41} that
\begin{align*}
\mathcal{T}(V)
&\ll q^{\frac{n}{2}}b^{\frac{1+\sigma}{2}+\frac{\ve}{2}}\sum_{1 \leq |\v|\ll V}
\sum_{\substack{\mathbf{a}\bmod c\\ c\mid (a\nabla C(\mathbf{a})+\v)}}
N_d(\mathbf{a})^{\frac{1}{2}}.
\end{align*}
Here, if $M(\x)$ denotes the matrix of second derivatives of $C(\x)$,
then $N_m(\mathbf{x})$ is the number of $\y$ modulo $m$ such that
$M(\mathbf{x})\y \equiv \mathbf{0} \bmod{m}$.
Recalling the notation of \cite[Lemma 12]{41},
in which we take $\v_0=\mathbf{0}$ and $g=C$, 
it follows that there is
an absolute constant $\kappa>0$ such that 
\begin{align*}
\mathcal{T}(V)
&\ll  q^{\frac{n}{2}}b^{\frac{1+\sigma+\ve}{2}}\mathcal{S}(\kappa V,a).
\end{align*}
We would now like a version of \cite[Lemma $16$]{41} which
applies to singular forms as well.  We claim that 
\begin{equation}
  \label{eq:JB'} 
\mathcal{S}(\kappa V,a)\ll 
c^\ve d^{\frac{1+\sigma}{2}}V^{n}\Big(1+\frac{c^2d}{V^3}\Big)^{\frac{n}{2}}.
\end{equation}
This relies completely on first establishing suitable analogues of
\cite[Lemmas 13 and 14]{41}.  
A little thought reveals that in the present setting we have
$$
\sum_{|\mathbf{r}|\leq R}N_m(\mathbf{r})^{\frac{1}{2}}\ll
m^{\frac{n}{2}}\Big(1+\frac{R^3}{m}\Big)^{\frac{n}{2}}R^\ve,
$$
for any $m\in  \NN$ and $R\geq 1$.
Here we have used the fact that $C$ is good to bound the number of
$|\mathbf{r}|\leq R$ such that $\rank M(\mathbf{r})=t$, rather than
using \cite[Lemma 2]{41},  as there. Furthermore, we have
$$
\sum_{\mathbf{a}\bmod d}N_d(\mathbf{a}) \ll d^{n+1+\sigma+\ve}.
$$
When $c<V$ it follows from the latter
bound and an application of Cauchy's inequality
that $\mathcal{S}(\kappa V,a)\ll d^{(1+\sigma+\ve)/2}V^n$, which is
acceptable for \eqref{eq:JB'}. In the
alternative case, when $c\geq V$, the necessary modifications to the 
proof of \cite[Lemma $16$]{41} are straightforward and we omit full details here.

We may now insert \eqref{eq:JB'} into the preceeding estimate for
$\mathcal{T}(V)$ to conclude that 
$$
\mathcal{T}(V)
\ll  q^{\frac{n+1+\sigma+\ve}{2}}
V^{n}\Big(1+\frac{q}{V^3}\Big)^{\frac{n}{2}}.
$$
If $q^{1/3}\leq V$ then this is clearly satisfacory for \eqref{eq:JB}. 
Alternatively, if $V<q^{1/3}$ then we can only enlarge our bound for
$\mathcal{T}(V)$ if we replace $V$ by $q^{1/3}$. But then 
$\mathcal{T}(V)$ is easily seen to be bounded by \eqref{eq:JB} in this
case too. This therefore completes the proof of \eqref{eq:JB}.

Our final task is to establish the second part of the lemma. Since $C$ is good, 
we may combine  Lemma \ref{lem:paris} with 
\eqref{eq:bound_99} to deduce that 
$$
S^*(\al)\ll q^{-\frac{n}{6}} P^n(\log P)^{7n} 
\min \{1, (|\theta|P^3)^{-1}\}^{\frac{n-1}{2}}.
$$
Let us write $T=q^{-B}P^{-3+C}$ for convenience. It therefore follows that
\begin{align*}
\int_{\A} |S^*(\al)|^k\d\al 
&\ll P^{kn+\frac{\ve}{2}}\sum_{q\leq P^A}
q^{1-\frac{kn}{6}} \int_{-T}^{T} 
\min \{1, (|\theta|P^3)\}^{-\frac{k(n-1)}{2}}\d\theta\\
&\ll P^{kn-3+\frac{\ve}{2}}\sum_{q\leq P^A}
q^{1-\frac{kn}{6}} \\
&\ll P^{kn-3+\ve},
\end{align*}
since $kn\geq 12$ and $k\leq 9$.
\end{proof}

We remark that when $\sigma=0$ it seems likely that an even sharper
error term is available in Lemma \ref{lem:F} through a more careful
analysis of the complete exponential sums $S_{a,q}(\v)$, when $q$ is prime.
It follows from \cite[Lemma 28]{hooley2} that the form 
$C$ is automatically good when the corresponding hypersurface has at most isolated
singularities and these are suitably mild.

In the setting of $1$-dimensional exponential sums, we have even
better control over $S(\al)$ on the arcs $\A=\A(A,B,C)$.  Let $C(x)=cx^3$ for some non-zero coefficient $c\in \ZZ$.  Then for
any $a,q\in \ZZ$ such that $0\leq a< q\leq P^A$ and $\gcd(a,q)=1$, and
any $\al=a/q+\theta\in \A_{q,a}$, the standard major arc analysis would
provide an estimate of the shape
$$
S(\al)= S^*(\al)+O(P^{A}+P^{A+C-AB}),
$$
where $S^*(\al)$ is given by \eqref{eq:S*}. 
Our final result in this section improves on this substantially,
and is readily derived from the book of Vaughan \cite[\S 4]{vaughan}.

\begin{lemma}\label{lem:E}
Let $\ve>0$, let $n=1$ and let $w\in\WW_1^{(2)}$. 
Let $A,B,C\geq 0$ with $A,B\leq 1$. Then for any $\al\in
\A_{q,a}$ we have 
$$
S(\al)=S^*(\al)+O(P^{\frac{A}{2}+\ve}+P^{\frac{A+C-AB}{2}+\ve}).
$$
Furthermore, if $k\geq 4$, then we have 
$$
\int_\A |S^*(\al)|^k \d\al\ll P^{k-3+\ve}.
$$
\end{lemma}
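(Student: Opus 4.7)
Both parts of Lemma \ref{lem:E} are routine consequences of the standard major arc treatment of univariate cubic Weyl sums developed in Vaughan \cite[\S 4]{vaughan}; the weight $w\in\WW_1^{(2)}$, being the characteristic function of an interval, plays a purely cosmetic role. I sketch the approach.

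For the pointwise approximation, the plan is to write $\alpha = a/q + \theta$ and split $S(\alpha)$ into residue classes modulo $q$ via $x = qm + r$ with $0 \le r < q$:
$$S(\alpha) = \sum_{r \bmod q} e_q(acr^3) \sum_{m \in \ZZ} w\bigl(P^{-1}(qm+r)\bigr) e\bigl(\theta c(qm+r)^3\bigr).$$
Poisson summation in $m$ (keeping only the zero frequency) yields the main term $S^*(\alpha)$, while the non-zero frequencies contribute an error. The latter is controlled by the cubic Gauss sum bound $|S_{a,q}|\ll q^{2/3+\ve}$ of Theorem 4.2 in \cite{vaughan} combined with van der Corput's third-derivative estimate, noting that the third derivative of $\psi cx^3$ is the constant $6\psi c$. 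This gives the standard refined approximation
$$S(\alpha) - S^*(\alpha) \ll q^{1/2+\ve}\bigl(1 + |\theta|P^3\bigr)^{1/2}.$$
Inserting $q \le P^A$ and $|\theta| \le q^{-B}P^{-3+C}$ then splits this as $O(P^{A/2+\ve}) + O(P^{(A+C-AB)/2+\ve})$, exactly as stated.

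For the moment estimate, I would combine the same Gauss sum bound with the third-derivative bound $|I_w(\psi)|\ll \min(1, |\psi|^{-1/3})$ for the oscillatory integral to obtain
$$|S^*(\alpha)|^k \ll q^{-k/3+\ve} P^k \min\bigl(1, |\theta P^3|^{-k/3}\bigr).$$
For $k\ge 4$ one has $k/3 > 1$, so the $\theta$-integral is $O(P^{-3})$, and it remains to show $\sum_{q \le P^A}\sum_{(a,q)=1} q^{-k}|S_{a,q}|^k \ll P^\ve$.

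The hard part will be this last step, since the pointwise Gauss sum bound is insufficient when $k=4$ or $5$, producing a spurious power of $P^A$. The way out is the averaged moment estimate for cubic Gauss sums, obtained by using the multiplicativity of $S_{a,q}$ to reduce to prime moduli and then invoking Weil's bound $|S_{a,p}|\ll p^{1/2}$; this yields $\sum_{(a,q)=1}|S_{a,q}|^k \ll q^{k-1+\ve}$ for $k\ge 4$, after which the outer $q$-summation is handled by $\sum_{q\le P^A}q^{-1}\ll \log P \ll P^\ve$. Assembling the pieces gives $\int_\A|S^*(\alpha)|^k d\alpha \ll P^{k-3+\ve}$.
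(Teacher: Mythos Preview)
Your plan tracks the paper's citation of Vaughan \cite[\S 4]{vaughan}, and the first part is fine: the bound $S(\alpha)-S^*(\alpha)\ll q^{1/2+\ve}(1+|\theta|P^3)^{1/2}$ is precisely Vaughan's Theorem~4.1 for cubes, and substituting $q\le P^A$, $|\theta|\le q^{-B}P^{-3+C}$ yields the two stated error terms. (Your description of the mechanism via Poisson summation and a van der Corput third-derivative bound is not quite Vaughan's argument, which instead proceeds by partial summation together with Weyl differencing, but the result you are quoting is standard and correct.)

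There is, however, a genuine gap in your treatment of the moment estimate. The intermediate bound you claim, $\sum_{(a,q)=1}|S_{a,q}|^k\ll q^{k-1+\ve}$ for $k\ge 4$, is \emph{false}. Multiplicativity reduces to prime \emph{power} moduli, not prime moduli, and Weil says nothing about $p^e$ with $e\ge 2$. At $q=p^3$ one computes $|S_{a,p^3}|=p^2$ exactly (for $p>3$), so with $k=4$ the left side is $\phi(p^3)\,p^8\asymp p^{11}$ whereas $q^{k-1}=p^9$. The correct route is to work directly with the multiplicative function $T(q):=q^{-k}\sum_{(a,q)=1}|S_{a,q}|^k$: using the recursion $S_{a,p^{e+3}}=p^2 S_{a',p^e}$ together with the Weil bound at $e=1$, one checks $\sum_{e\ge 1}T(p^e)\ll p^{-1}$ for every prime $p$, whence the partial Euler product gives $\sum_{q\le Q}T(q)\ll(\log Q)^{O(1)}\ll Q^\ve$. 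This is exactly the content of Vaughan's Lemma~4.9, and it is what closes the argument; the $q$-sum is not handled termwise.
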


\section{Density of rational points on cubic hypersurfaces}\label{sec:upper}

Let $X \subset \PP^{n-1}$ be a cubic hypersurface, not equal to a
cone, that is defined by an absolutely irreducible cubic form $F\in
\ZZ[x_1,\ldots,x_n]$. For $P\geq 1$, let
$$
N_{n,F}(P):=\#\{\x\in\ZZ^n: |\x|\leq P, ~F(\x)=0\},
$$
According to the conjecture of Manin \cite{f-m-t} one expects 
$N_{n,F}(P)\sim cP^{n-3}$ for some constant $c\geq 0$ as soon as $F$ is
non-singular and $n\geq 5$. 
When $F$ is not necessarily non-singular, or the number of variables
is small, there is the dimension growth conjecture due to Heath-Brown. This
predicts that
\begin{equation}
  \label{eq:upper_2}
N_{n,F}(P)\ll P^{n-2+\ve},
\end{equation}
and has received a great deal of attention in recent years.
Let $\sigma$ denote the projective dimension of the singular
locus of $X$. The dimension growth  conjecture has been established by the author
\cite{cubhyp-circle} when $n\geq 6+\sigma$.
The following result, which may of independent interest,  
shows that one can do better than 
\eqref{eq:upper_2} if larger values of $n$ are permitted.

\begin{lemma}\label{lem:8}
We have $N_{n,F}(P)\ll P^{n-5/2+\ve}$ when $n\geq 9+\sigma$.
\end{lemma}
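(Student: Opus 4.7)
I would proceed by induction on $\sigma$, with base case $\sigma\in\{-1,0\}$ handled by the Hardy--Littlewood circle method and the inductive step $\sigma\ge 1$ handled by hyperplane slicing. For the base case, $F$ defines a non-cone cubic hypersurface with empty or isolated singular locus and $n\ge 9+\sigma$, so $F$ is good in the sense of \S \ref{sec:exp_sums} by \cite[Lemma 28]{hooley2}. A careful application of the circle method---using the minor-arc bound of Lemma \ref{lem:B}, the averaged estimate of Lemma \ref{lem:D}, and the major-arc approximation of Lemma \ref{lem:F}---yields $N_{n,F}(P)=MP^{n-3}+O(P^{n-5/2+\ve})$ for some $M=O(1)$, which implies the desired bound.

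For the inductive step $\sigma\ge 1$, write $F_t(x_1,\ldots,x_{n-1}):=F(x_1,\ldots,x_{n-1},t)$ and slice:
\[ N_{n,F}(P)\leq\sum_{|t|\leq P}N_{n-1,F_t}(P). \]
By a Bertini-type argument, for all but $O(1)$ values of $t$, the slice $F_t$ is absolutely irreducible, not a cone, and has singular locus of dimension at most $\sigma-1$. For these generic $t$, the inductive hypothesis (applicable since $n-1\ge 9+(\sigma-1)$) gives $N_{n-1,F_t}(P)\ll P^{n-7/2+\ve}$, and summation over $|t|\leq P$ yields the target bound $P^{n-5/2+\ve}$.

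For the finitely many exceptional $t$, the crucial observation is that a $\QQ$-linear factor of $F_t$ (which alone would contribute $P^{n-2}$) cannot arise for $t\ne 0$: by scaling invariance of the affine cone $\hat{X}$, such a factor would force $F$ itself to be reducible. At $t=0$, a linear factor of $F_0$ would force $X$ to contain an $(n-3)$-dimensional projective linear subspace, but the geometric bound $\dim\sing X\ge n-1-2k$ for $X$ containing a codimension-$k$ linear subspace then contradicts $n\ge 9+\sigma$. The remaining exceptional cases---$F_t$ absolutely irreducible with $\sigma(F_t)=\sigma$, $F_t$ a cone with small vertex, or $F_t$ reducible over $\cQ$ as a norm form---each contribute at most $O(P^{n-3+\ve})$ per bad $t$, either via \cite{cubhyp-circle} or by counting integer points on the low-dimensional rational linear subspaces arising in the Galois-conjugate cases.

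The principal obstacle is the base case: confirming that at the boundary $n=9+\sigma$, the circle method produces an error of size exactly $P^{n-5/2+\ve}$. This requires tight coupling of Lemmas \ref{lem:B}, \ref{lem:D}, and \ref{lem:F}, with the good-cubic-form hypothesis controlling the complete exponential sums $S_{a,q}$ of \eqref{eq:complete} uniformly in $q$, and a careful estimation of both the singular series and the singular integral.
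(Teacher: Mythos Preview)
Your strategy---slice by hyperplanes to lower $\sigma$, then invoke the circle method at the bottom---is the paper's approach in outline, but the execution has a fatal uniformity gap. When you apply the inductive hypothesis to $F_t$, the implied constant in $N_{n-1,F_t}(P)\ll P^{n-7/2+\ve}$ depends on the coefficients of $F_t$; since $|t|\leq P$ and $F$ is cubic, these coefficients grow like $P^2$, and you give no control on this dependence. Summing $O(P)$ such bounds is then vacuous. Moreover $F_t$ is an inhomogeneous polynomial, not a form, so your inductive hypothesis (stated for $N_{n,F}$ with $F$ a form) does not even literally apply. The paper resolves both issues at once by formulating the target estimate for inhomogeneous cubic polynomials $g$ with non-singular cubic part, proved with explicit polynomial height dependence $N_w(g;P)\ll H^\theta P^{n-5/2+\ve}$ for $H\geq\|g\|_P$ and an absolute $\theta>0$; this is precisely the shape of bound that survives the slicing reduction of \cite[\S 2]{cubhyp-circle}.

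Your base case is also not as described. Lemma \ref{lem:F} only furnishes a specific smooth weight $w\in\WW_n^{(1)}$, not the sharp cutoff defining $N_{n,F}$, and goodness for $\sigma=0$ requires the singularities to be suitably mild (see the remark after Lemma \ref{lem:F}), which is not assumed here. The paper does not use Lemmas \ref{lem:B}--\ref{lem:F} for this result at all: it slices all the way down to non-singular cubic part with $n\geq 8$ and then re-runs the $\Sigma_1$, $\Sigma_2$ estimations of \cite{cubhyp-circle} with the sharper target exponent $n-5/2$. That detailed computation is the substance of the proof and cannot be replaced by a black-box appeal.
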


\begin{proof}
Our proof of the lemma is based on the approach developed in \cite{cubhyp-circle}.
Arguing with hyperplane sections, as in \cite[\S 2]{cubhyp-circle}, we
see that it will suffice to show that there is an absolute constant
$\theta>0$ such that 
\begin{equation}
  \label{eq:gen1}
N_{w}(g;P)
:= \sum_{\substack{\x\in \ZZ^n\\ g(\x)=0}} w(P^{-1}\x)
\ll H^{\theta}P^{n-\frac{5}{2}+\ve},
\end{equation}
for any weight function $w:\RR^n\rightarrow \RR_{\geq 0}$ belonging to
the class of weight functions described at the start of \cite[\S 2]{cubhyp-circle}, any 
$H \geq \|g\|_P$, and any 
cubic polynomial $g\in  \ZZ[x_1,\ldots,x_n]$ such that $n\geq 8$ and 
the cubic part $g_0$ is non-singular.
Here we recall that $\|g\|_P:= \|P^{-3}g(P\x)\|$, where $\|h\|$
denotes the height of a polynomial $h$. 

The bulk of \cite{cubhyp-circle} goes through
verbatim, and we are left with reevaluating the 
estimation of $\Sigma_2=\Sigma_2(R,\mathbf{R};t)$ and $\Sigma_1=\Sigma_1(R,\mathbf{R};t)$ in 
\cite[\S 5.1]{cubhyp-circle} and \cite[\S 5.2]{cubhyp-circle}, respectively.
Beginning with the former, 
we note from \cite[Eq. (5.5)]{cubhyp-circle} 
that this breaks into an estimation of $\Sigma_{2,a}$ and $\Sigma_{2,b}$.
The first of these is estimated as $O(H^{\theta}
P^{3n/4-3/4+\ve}+H^{\theta} P^{n-3+\ve})$. Both of the exponents of
$P$ are clearly at most $n-5/2+\ve$ when $n\geq 8$, as required for
\eqref{eq:gen1}.
Turning to $\Sigma_{2,b}$, one easily traces through the argument,
finding that
$$
\Sigma_{2,b}\ll H^\theta P^\ve \big(
P^{\frac{3n}{4}-\frac{3}{4}}+ P^{n-2}E_n + P^{\frac{13n}{16}-1}+P^{\frac{7n}{8}-\frac{5}{3}} 
\big),
$$
where 
$$
E_n=P^{-1-\frac{7n}{40}}R^{2-\frac{3n}{20}}R_2^{\frac{3n}{10}-\frac{3}{2}}\ll 
P^{-1-\frac{7n}{40}}R^{\frac{5}{4}}\ll  P^{\frac{7}{8}-\frac{7n}{40}}.
$$
Here the first term (resp. second term, sum of the final two terms) corresponds to the case $V\geq R_2$
(resp. $(R_2^2R_3)^{1/3}\leq V<R_2$, $V<(R_2^2R_3)^{1/3}$). 
A modest pause for thought reveals that all of these exponents are
satisfactory when $n\geq 8$.

We now turn to the estimation of $\Sigma_1$ in 
\cite[\S 5.2]{cubhyp-circle}, which is again written as a sum 
$\Sigma_{1,a}+\Sigma_{1,b}$.  Beginning with $\Sigma_{1,a}$, we easily
observe that
$$
\Sigma_{1,a}\ll H^\theta P^\ve \big(
P^{\frac{3n}{4}-\frac{3}{4}}+ P^{n-2}E_n + P^{n-3}\big),
$$
this time with 
$$
E_n=P^{2-\frac{n}{4}}t^{1-\frac{n}{12}}R^{\frac{11}{6}-\frac{n}{4}}(R_2^2R_3)^{\frac{n}{9}-\frac{1}{2}}
\ll P^{-\frac{1}{2}}R^{-\frac{1}{9}}+P^{-1}R^{\frac{2}{9}}\ll P^{-\frac{1}{2}}.
$$
This therefore shows that $\Sigma_{1,a}\ll H^\theta P^{n-5/2+\ve}$, as
required for \eqref{eq:gen1}.  Turning to 
$\Sigma_{1,b}$,  we will need to modify  the argument slightly.
On noting that $R_2^{3/2}R_3^{1/2}\gg (R_2^2R_3)^{2/3}$,
we easily deduce that 
$$
\Sigma_{1,b}\ll H^\theta P^\ve \big(
P^{\frac{3n}{4}-\frac{3}{4}}+ P^{n-2+\frac{7}{8}-\frac{7n}{40}} + T\big),
$$
where we have set
$$
T:=P^nt R^{2-\frac{n}{2}} (R_2^2 R_3)^{-\frac{2}{3}}
\min\Big\{R_2^n, 
\Big(\frac{R_2^2R_3}{V}\Big)^{\frac{n}{2}}, R^{\frac{3n}{8}}\min\{1, (tP^3)^{-\frac{n}{8}}
\Big\}.
$$
The first and second terms here are satisfactory for $n\geq 8$.
Moreover the third term is clearly satisfactory for $n\geq 16$, on
taking $\min\{A,B,C\}=C$. To handle the contribution from the final term when $8\leq n<16$, it will be convenient
to recall that $V$ has order of magnitude $Rt^{1/2}P^{1/2}$ when
$t\geq P^{-3}$ and $R/P$ when $t<P^{-3}$. 

Suppose first that $R\geq
P$. When $t\geq P^{-3}$ we deduce that 
\begin{align*}
T\ll P^{n-3} \min\big\{ 
R^{2-n}(R_2^2R_3)^{\frac{n}{2}-\frac{2}{3}}P^{\frac{n}{2}},
R^{2-\frac{n}{8}} (R_2^2R_3)^{-\frac{2}{3}}\big\}
\ll P^{n-\frac{7}{3}}R^{\frac{5}{6}-\frac{n}{8}}
&\ll P^{\frac{7n}{8}-\frac{3}{2}},
\end{align*}
on taking 
\begin{equation}
  \label{eq:ab}
  \min\{A,B\}\leq A^{\frac{4}{3n}}B^{1-\frac{4}{3n}}.
\end{equation}
This is clearly satisfactory for $n\geq 8$. When $t<P^{-3}$ we easily deduce that the
same bound holds on taking $V$ to be of size $R/P$ in the definition
of $T$.

Suppose now that $R<P$ and $t\geq P^{-3}$. 
If $t>(R^2P)^{-1}$, then it is not hard to see that
\begin{align*}
T\ll P^{n-1} R^{-\frac{n}{2}} (R_2^2 R_3)^{-\frac{2}{3}}
\min\big\{(R_2^2R_3)^\frac{n}{2}, R^{\frac{5n}{8}}P^{-\frac{n}{4}}\big\}
\ll P^{\frac{3n}{4}-\frac{2}{3}}R^{\frac{n}{8}-\frac{5}{6}}
&\ll P^{\frac{7n}{8}-\frac{3}{2}},
\end{align*}
using \eqref{eq:ab}.  This is satisfactory for $n\geq
8$. Alternatively, if $P^{-3}\leq 
t\leq (R^2P)^{-1}$, then one finds that
\begin{align*}
T&\ll P^{n} R^{2-\frac{n}{2}} (R_2^2 R_3)^{-\frac{2}{3}}
\min\big\{t(R_2^2R_3)^\frac{n}{2},
R^{\frac{3n}{8}}t^{1-\frac{n}{8}}P^{-\frac{3n}{8}}\big\}.
\end{align*}
Using \eqref{eq:ab} it easily follows that
\begin{align*}
T&\ll P^{\frac{5n}{8}+\frac{1}{2}} t^{\frac{7}{6}-\frac{n}{8}}R^{\frac{3}{2}-\frac{n}{8}}.
\end{align*}
Since $t\leq (R^2P)^{-1}$ and $R<P$ this is clearly satisfactory when
$n=8$.  If instead $n\geq 9$ then we deduce that
\begin{align*}
T&\ll P^{n-3} R^{5-\frac{n}{2}}(R_2^2 R_3)^{\frac{n}{2}-\frac{14}{3}},
\end{align*}
on taking
$\min\{A,B\}\leq A^{1-\frac{8}{n}}B^{\frac{8}{n}}$ rather than \eqref{eq:ab}.
Since $R<P$ we easily conclude that 
$T\ll P^{n-5/2}$ in this case too.
Finally, when $R<P$ and $t<P^{-3}$, we see that
\begin{align*}
T\ll P^{n-3}
\min\big\{ R^{2-\frac{n}{2}}(R_2^2R_3)^{\frac{n}{2}-\frac{2}{3}}, R^{2-\frac{n}{8}}(R_2^2R_3)^{-\frac{2}{3}} 
\big\} \ll P^{n-3} R^{\frac{3}{2}-\frac{n}{8}}
&\ll P^{n-\frac{5}{2}},
\end{align*}
using \eqref{eq:ab}. This therefore concludes the proof of the lemma.
\end{proof}

It is clear from the proof of Lemma \ref{lem:8} that one actually
achieves an estimate of the shape 
$$
N_{n,F}(P)\leq c_{\ve,n}\|F\|^\theta P^{n-5/2+\ve},
$$ 
for a constant
$\theta>0$, when $n\geq 9+\sigma$.
It seems likely that one can push the analysis further, obtaining
$N_{n,F}(P)\ll P^{n-3+\ve}$ for $n\geq 11+\sigma$, as predicted by Manin.

A key step in our argument involves generating good estimates for the
moments 
\begin{equation}
  \label{eq:moment}
  M_{n}(P):=\int_{0}^1|S(\al)|^{2}\d\al,
\end{equation}
where $S(\al)$ is the cubic exponential sum \eqref{eq:Tcubic}, for
an appropriate weight $w\in\WW_n$. 
By the orthogonality of the exponential
function we have 
$$
M_n(P) =\sum_{\substack{\x,\y\in\ZZ^n\\C(\x)=C(\y)}} w(P^{-1}\x)w(P^{-1}\y).
$$
It is clear that there exists a constant $c>0$ depending on $w$ such
that the overall contribution to $M_n(P)$
from $\x,\y$ such that $\max\{|\x|, |\y|\} >c P$ is
$O(1)$,  if $P$ is taken to be sufficiently large. Hence
it follows that
$$
M_n(P)\ll N_{2n,C-C}(cP).
$$
When $C$ is a non-singular form in $n$ variables it is obvious that $C-C$
is a non-singular form in $2n$ variables, defining a hypersurface of
dimension $2n-2$.  
When $C$ has a finite non-empty singular locus it is not hard to see
that $C-C$ has singular locus of dimension $1$.
The following result now flows very easily from
\eqref{eq:upper_2} and Lemma \ref{lem:8}.

\begin{lemma}\label{lem:C}
Let $\ve>0$ and let $n\geq 3$.
Assume that $C\in \ZZ[x_1,\ldots,x_n]$ is a cubic form defining a
projective hypersurface whose singular locus has dimension $\sigma\in\{-1,0\}$.
Then we have  
$$
M_n(P) \ll 
\begin{cases}
P^{4+\ve},
&\mbox{if $n=3$ and $\sigma=-1$,}\\
P^{2n-\frac{5}{2}+\ve}, &\mbox{if $n\geq 5+\sigma$.}
\end{cases}
$$
\end{lemma}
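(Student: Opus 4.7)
The plan is to reduce $M_n(P)$ to a point-counting problem on the cubic hypersurface $F(\x,\y) := C(\x) - C(\y) = 0$ in $\PP^{2n-1}$ and then invoke Lemma \ref{lem:8} together with the dimension growth estimate \eqref{eq:upper_2} already recorded in this section.

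First I would apply orthogonality of the exponentials to write
\[
M_n(P) = \sum_{\substack{\x,\y \in \ZZ^n\\ C(\x) = C(\y)}} w(P^{-1}\x)\, w(P^{-1}\y),
\]
and then use the fact that $w$ is supported on (or rapidly decaying outside) a ball of radius $O(P)$ to conclude $M_n(P) \ll N_{2n,F}(cP)$ for a suitable constant $c>0$ depending on the weight. This reduction is already alluded to in the paragraph preceding the statement.

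The next step is to analyse the projective singular locus of $F = 0$ in $\PP^{2n-1}$. Since $\nabla F(\x,\y) = (\nabla C(\x), -\nabla C(\y))$, the affine singular locus of $F$ is the product of the affine singular locus of $C$ with itself. When $\sigma = -1$ this forces $F$ to be non-singular, so the projective singular locus $\sigma_F$ equals $-1$. When $\sigma = 0$, the affine singular locus of $C$ is a cone over a finite set of projective points, and so is a union of finitely many lines through the origin, of affine dimension $1$; hence the affine singular locus of $F$ has dimension $2$ and projectively $\sigma_F = 1$.

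Once this is in hand the lemma becomes bookkeeping. For $n \geq 5+\sigma$, the hypothesis $2n \geq 9 + \sigma_F$ of Lemma \ref{lem:8} holds in both cases ($2n \geq 8$ when $\sigma=-1$ and $n \geq 4$, and $2n \geq 10$ when $\sigma=0$ and $n \geq 5$), yielding $M_n(P) \ll P^{2n-5/2+\ve}$. For the remaining case $n=3$, $\sigma=-1$, the form $F$ is a non-singular cubic in six variables, and the dimension growth estimate \eqref{eq:upper_2}, valid under $2n \geq 6+\sigma_F$ (here $6\geq 5$), gives $M_3(P) \ll P^{6-2+\ve} = P^{4+\ve}$. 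The only mildly delicate point along the way is the singular-locus calculation for $C-C$; after that, everything is a direct appeal to the density estimates of this section.
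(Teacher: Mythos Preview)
Your proposal is correct and follows exactly the approach taken in the paper: reduce $M_n(P)$ to $N_{2n,C-C}(cP)$ via orthogonality and the support/decay of $w$, compute $\sigma_F\in\{-1,1\}$ for $F=C(\x)-C(\y)$ according as $\sigma\in\{-1,0\}$, and then apply Lemma~\ref{lem:8} for $n\geq 5+\sigma$ and the dimension growth bound~\eqref{eq:upper_2} for the residual case $n=3$, $\sigma=-1$. Your verification of the numerical hypotheses ($2n\geq 9+\sigma_F$ and $2n\geq 6+\sigma_F$ respectively) is accurate.
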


\section{Cubics splitting off a form}

In this section we establish Theorem \ref{main}. 
Let $n_1,n_2\geq 1$ such that 
$$
n_1+n_2=n\geq 13.
$$ 
It will be convenient to write $\x=(x_1,\ldots,x_{n_1})$ and $\y=(y_1,\ldots,y_{n_2})$.
We henceforth fix our attention on cubic forms of the shape 
$$
C(\x,\y)=C_1(\x)+C_2(\y),
$$
with $C_1\in \ZZ[\x]$ and $C_2\in \ZZ[\y]$.  
In what follows we may always suppose that $C=C_1+C_2$ is non-degenerate, by which we mean that it
is not equivalent over $\ZZ$ to a cubic form in fewer variables, since
such forms have obvious non-zero integral solutions.
Recall the definition of ``good'' cubic forms from \S \ref{s:geom}.
It follows from \cite[\S 14]{dav-book} that either $C_1$ is good or else the cubic hypersurface $C_1=0$ has a
rational point. The same is true for the cubic forms $C_2$ and
$C_1+C_2$. Since the existence of a rational point on any of these
hypersurfaces is enough to ensure that $X(\QQ)\neq \emptyset$ in the
statement of Theorem \ref{main}, so we may proceed under the assumption that
$C_1, C_2$ and $C_1+C_2$ are all good.

Let $w=(w_1,w_2)\in \WW_n$, as introduced in \S \ref{sec:exp_sums}.
When $C_1$ satisfies the hypotheses in Lemma \ref{lem:F} we will
assume that $w_1\in\WW_{n_1}^{(1)}$ is the weight function
constructed there.  
Our argument revolves around establishing an asymptotic formula for
the sum
$$
N(P):=\sum_{\substack{(\x,\y)\in \ZZ^n\\ C_1(\x)+C_2(\y)=0}} 
w_1(P^{-1}\x)w_2(P^{-1}\y),  
$$
as $P\rightarrow \infty$.  
As is usual in applications of the Hardy--Littlewood circle method,
the starting point is the simple identity
$$
N(P)=\int_0^1 S_1(\al)S_2(\al) \d\al,  
$$
where 
$$
S_i(\al):=\sum_{\x\in\ZZ^{n_i}}w_i(P^{-1}\x) e(\al C_i(\x))  
$$
for $i=1,2$. It will be convenient to define 
$$
S(\al):=S_1(\al)S_2(\al)=\sum_{(\x,\y)\in\ZZ^{n}}w(P^{-1}(\x,\y))
e\big(\al (C_1(\x)+C_2(\y))\big),
$$
where $w=(w_1,w_2)$.  Then $S(\al)=S_w(\al;C_1+C_2,P)$
is a cubic exponential sum of the sort introduced in \eqref{eq:Tcubic}.

In the usual way one divides the interval $[0,1]$ into a
set of major arcs and minor arcs.  For major arcs we  will take 
the union of intervals 
$$
\M:=\bigcup_{q\leq P^{\D}}\bigcup_{\substack{a=0\\ \gcd(a,q)=1}}^{q-1} \Big[\frac{a}{q}-
P^{-3+\D}, \frac{a}{q}+P^{-3+\D}
\Big],
$$
which is equal to $\A(\D,0,\D)$ in the notation of \eqref{eq:Aqa}.
 The corresponding set of minor arcs is defined modulo $1$ as
$\m=[0,1]\setminus \M$.
Here $\D>0$ is an arbitrary small parameter. It turns out the choice 
$$
\D:=\frac{1}{10}
$$
is acceptable.  
We may deduce from Lemma~15.4 and \S\S 16--18 in \cite{dav-book} 
(see also \cite[Lemma~2.1]{14}) that
$$
\int_\M S(\al)\d\al = \ss \ii P^{n-3}+o(P^{n-3}),
$$
where 
\begin{align*}
\ss&:=\sum_{q=1}^\infty 
\sum_{\substack{a \bmod{q}\\ \gcd(a,q)=1}} 
q^{-n}S_{a,q}^{(1)}S_{a,q}^{(2)},\\
\ii&:=\int_{-\infty}^{\infty}
\int_{\RR^{n_1}}\int_{\RR^{n_2}}w(\x,\y)e\big(\theta(C_1(\x)+C_2(\x))\big)\d\x\d\y
\d \theta
\end{align*}
are both absolutely convergent.
Here, the absolute convergence of $\ss$ follows from Lemma
\ref{lem:paris}, and  we have written
$$
S_{a,q}^{(i)}:=\sum_{\mathbf{u}\in (\ZZ/q\ZZ)^{n_i}} e_q\big(aC_i(\mathbf{u})\big),
$$
for $i=1,2$.
Since $\ss$ is absolutely convergent and $C=C_1+C_2$ is
non-degenerate, it follows from standard arguments (see \cite[Lemma 7.3]{dav-32}, for example)
that $\ss>0$. The treatment of the singular integral is routine and we omit giving the details
here, all of which can be supplied by consulting 
\cite[\S 16]{dav-book} and \cite[\S 4]{hb-10}. Assuming that 
neither $C_1$ nor $C_2$ has a linear factor defined over
$\QQ$ it is possible to choose 
$(\z_1,\z_2)\in \RR^{n}$ in the definition of
$w=(w_1,w_2)$, so that 
each $\z_i$ is a non-singular real solution to $C_i=0$. On 
selecting a sufficiently small value of $\rho>0$ in the definition
of $w_2$ we can then ensure $\ii> 0$. 
The case in which $C_1$ or $C_2$ does factorise over $\QQ$ 
clearly enables us to deduce the statement of Theorem \ref{main}
very easily.

In order to conclude the proof of
Theorem~\ref{main} it remains to show that the overall contribution
from the minor arcs 
\begin{equation}
  \label{eq:minor}
E:=\int_{\m} S_1(\al)S_2(\al) \d \al,
\end{equation}
is satisfactory.  This is
where the bulk of our work lies and we will find it necessary to
undertake a lengthy case by case analysis to handle the 
different values of $n_1$ and
$n_2$. In doing so it will suffice to handle the case $n_1+n_2=13$,
the case $n_1+n_2>13$ being taken care of by \cite{14}.
Without loss of generality we assume henceforth that $1\leq n_1\leq 6$.

Let $Q\geq 1$ and let $\al \in \m$. By Dirichlet's approximation theorem
we may find coprime integers $1 \leq a \leq q$ such that $q \leq Q$
and $|q\al-a|\leq 1/Q$.   The value of $Q$ should satisfy $1\leq Q
\leq P^{3/2}$ and is chosen to optimise the final stages of the argument.
The obvious approach involves applying estimates for each individual
exponential sum $S_1(\al)$ and $S_2(\al)$ for $\al\in \m$, before then
deriving an estimate for the integral over the full set of minor
arcs. While we have rather good control over these sums when $n_1$ and
$n_2$ are both large,
the case in which one of $n_1$ or $n_2$ is small presents more of an
obstacle.  Instead we apply H\"older's
inequality to deduce that
\begin{equation}
  \label{eq:split}
  |E| \leq \left(\int_{\m} |S_1(\al)|^u \d \al \right)^{\frac{1}{u}}\left(\int_{\m} |S_2(\al)|^v \d \al \right)^{\frac{1}{v}},
\end{equation}
for any $u,v>0$ such that $1/u+1/v=1$. This will allow us to separate
out the behaviour of the exponential 
sums $S_1(\al)$ and $S_2(\al)$ on the minor arcs.

Before embarking on the case by case analysis alluded to above, it
will save needless repetition if we give some reasonably general
estimates here that can be applied in various contexts.
Our principal means for dealing with small values of $n_1$ relies on
taking the inequality 
\begin{equation}
  \label{eq:Ivu}
\int_{\m} |S_1(\al)|^u \d \al \leq \int_{0}^1 |S_1(\al)|^u \d \al
\end{equation}
in \eqref{eq:split}.  This will in turn be estimated as $O(P^{k+\ve})$
for an appropriate $k>0$, whence a typical scenario 
entails studying
\begin{equation}
  \label{eq:Iuv}
  I_{u,v}(k;\mathfrak{n}):=P^{\frac{k}{u}+\ve}\left(\int_{\mathfrak{n}}|S_2(\al)|^v \d\al\right)^{\frac{1}{v}},
\end{equation}
for $u,v>0$ such that $1/u+1/v=1$ and certain subsets
$\mathfrak{n}\subseteq\m$. We will always assume that $6/5< v\leq 2$.

Let $\al \in \mathfrak{n}$ and let $Q\geq 1$.  There exist
coprime integers $0 \leq a <q \leq Q$ such that 
 $|q\al-a|\leq 1/Q$.  An argument based on dyadic summation reveals that 
\begin{equation}
  \label{eq:foot}
  I_{u,v}(k;\mathfrak{n})\ll P^{\frac{k}{u}+\ve} (\log P)^2 \max_{R,\phi,\pm} \Sigma_v(R,\phi,\pm)^{\frac{1}{v}}, 
\end{equation}
where $\Sigma_v(R,\phi,\pm)$ is given by \eqref{eq:sig}, and 
the maximum is over the possible sign changes and 
$R,\phi$ such that 
\begin{equation}
  \label{eq:range}
0<R\leq Q, \quad 0<\phi\leq (RQ)^{-1}.
\end{equation}
Furthermore, $R,\phi$ should satisfy whatever conditions are
appropriate to ensure we are dealing with points on $\mathfrak{n}$. In
particular, since $\mathfrak{n}\subseteq \m$  the inequalities
$R\leq P^\D$ and $\phi \leq P^{-3+\D}$ cannot both
hold simultaneously. 

Let $u,v,k$ be given. Define
\begin{equation}
  \label{eq:rp1}
  \rho_n:=\frac{n(vn-8-v)}{vn^2-(3v+4)n+2v}, \quad \pi_n:=
\frac{-2v(n^2-(18-2\frac{k}{u})n-2)}{vn^2-(3v+4)n+2v},
\end{equation}
and 
\begin{equation}
  \label{eq:rp2}
  \rho_n':=\frac{2v}{2-v}\left(\frac{n^2}{2(3n-2)}-\frac{2}{v}\right), \quad \pi_n':=
\frac{2v}{2-v}\left(n-\frac{23}{2}+\frac{k}{u}\right).
\end{equation}
Let
\begin{equation}
  \label{eq:delta}
\delta:=\frac{1}{10^4}.
\end{equation}
Recall that our task is to show that
$E=o(P^{10})$ when $n=n_1+n_2=13$.
The following result provides us with easily checked
conditions on $u,v,k$ and $n_2$ under which 
$I_{u,v}(k;\mathfrak{n})$ makes a satisfactory contribution.

\begin{lemma}\label{lem:v<2}
Let $6/5< v<2$. Assume that $n_2\geq 6$ and 
\begin{equation}
  \label{eq:paris}
  \rho_{n_2}+\rho_{n_2'}\geq 1.
\end{equation}
Define $\m_0$ to be the set 
of $\al \in \m$ for which there exist coprime integers $0 \leq a <q$ such that 
$$
q\leq P^{\frac{\pi_{n_2}'-\pi_{n_2}}{\rho_{n_2}'+\rho_{n_2}}+2\delta}, \quad
q^{\rho_{n_2}'}P^{-\pi_{n_2}'-\delta}\leq 
\Big|\al-\frac{a}{q}\Big|\leq q^{-\rho_{n_2}}P^{-\pi_{n_2}+\delta}.
$$
Then 
$$
I_{u,v}(k;\mathfrak{n}\setminus \m_0)=o(P^{10}),
$$
for any $\mathfrak{n}\subseteq \m$,
provided that 
\begin{equation}\label{eq:in1}
\frac{2}{v}+\frac{21}{2}-n_2 \leq \frac{k}{u}< \frac{103}{10}-\frac{4n_2}{5}
\end{equation}
and 
\begin{equation}
  \label{eq:in2}
\pi_{n_2}'\geq 3.
\end{equation}
\end{lemma}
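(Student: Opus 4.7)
The plan is to combine the dyadic reduction~\eqref{eq:foot} with Lemma~\ref{lem:D} applied to the good cubic form $C_2$, and to choose the free parameter $H\in[1,P]\cap\ZZ$ so that $\Sigma_v(R,\phi,\pm)^{1/v}=o(P^{10-k/u})$ uniformly over every dyadic pair $(R,\phi)$ compatible with some $\al\in\mathfrak{n}\setminus\m_0$. First I would dispose of the trivial $P^3$ term of Lemma~\ref{lem:D}: inserted into~\eqref{eq:foot} it contributes $\ll P^{k/u+3/v+\ve}$, and combining~\eqref{eq:in1} with $v>6/5$ and $n_2\geq 6$ makes this strictly $o(P^{10})$.

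The substance of the proof lies in estimating the main term
$$
\mathcal{E}(R,\phi,H):=R^{2/v}\phi^{1/v-1/2}\left(\frac{\psi_H P^{2n_2-1+\ve}}{H^{n_2-1}}F\right)^{1/2},
$$
which is essentially $\Sigma_v(R,\phi,\pm)^{1/v}$ arising from Lemma~\ref{lem:D}, and in verifying that $\mathcal{E}(R,\phi,H)\ll P^{10-k/u-\delta'}$ for some $\delta'>0$. Since $\psi_H=\phi+1/(P^2H)$ naturally splits according to whether $\phi\geq 1/(P^2H)$ or $\phi<1/(P^2H)$, and $F$ splits according to which of its three summands dominates, there are several sub-cases. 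In each of these I would choose $H$ so as to balance the two dominant contributions inside $F$: for instance $H\asymp (R/P)^{2n_2/(n_2-2)}$ when $\psi_H\asymp 1/(P^2H)$, and $H$ determined from $R^{n_2}\phi^{n_2-1}P^{n_2-2}H^{n_2/2}\asymp 1$ when $\psi_H\asymp\phi$. A careful accounting of exponents in the resulting estimate produces two pivotal conditions of the shape $\phi\ll q^{-\rho_{n_2}}P^{-\pi_{n_2}+\delta}$ (the ``large $\phi$'' regime) and $\phi\gg q^{\rho_{n_2}'}P^{-\pi_{n_2}'-\delta}$ (the ``small $\phi$'' regime), each of which is exactly what is needed for the desired bound on $\mathcal{E}$. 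In other words, the formulas~\eqref{eq:rp1}--\eqref{eq:rp2} for $\rho_{n_2},\pi_{n_2},\rho_{n_2}',\pi_{n_2}'$ are designed precisely to encode these thresholds.

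To conclude I would translate back to constraints on $\al$. Writing $\al=a/q\pm t$ with $q\asymp R$ and $t\asymp\phi$, the hypothesis $\al\in\mathfrak{n}\setminus\m_0$ forces either the upper or the lower bound on $|\al-a/q|$ in the definition of $\m_0$ to fail (or the bound on $q$ to fail), and this places $(R,\phi)$ into one of the favourable regimes above. The hypothesis~\eqref{eq:paris} is needed to ensure that every such $\al$ falls into the union of these two regimes, while~\eqref{eq:in2} guarantees that the excluded annulus is genuinely contained in~$\m$. The $q$-threshold $P^{(\pi_{n_2}'-\pi_{n_2})/(\rho_{n_2}'+\rho_{n_2})+2\delta}$ is precisely the value at which the excluded annulus for $|\al-a/q|$ collapses. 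The hard part will be the exponent bookkeeping in the middle paragraph: one has to verify case by case that the optimal $H$ lies in $[1,P]\cap\ZZ$, that the dominant terms of $F$ are indeed dominant in the claimed ranges of $R$ and $\phi$, and that the resulting inequalities rearrange into the clean thresholds~\eqref{eq:rp1}--\eqref{eq:rp2}; once this is done the translation to $\al$ is a routine rearrangement of exponents.
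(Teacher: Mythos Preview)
Your overall architecture is right --- dyadic reduction via \eqref{eq:foot}, insert Lemma~\ref{lem:D}, dispose of the $P^3$ term, then choose $H$ case by case --- and this is exactly what the paper does. But two concrete choices in your plan will not go through as stated.

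First, the heuristic ``balance the two dominant contributions inside $F$'' is not the correct optimisation. The paper does not try to minimise $F$; it chooses $H$ so that the term $R^{2/v}\phi^{1/v-1/2}(\psi_H P^{2n_2-1}/H^{n_2-1})^{1/2}$ with $F$ replaced by $1$ already equals $P^{10-k/u}$ (up to $P^\ve$), and only then checks that the remaining two summands of $F$ are $O(1)$ in the relevant ranges of $(R,\phi)$. This leads to $H\asymp P^\ve(R^{2/v}\phi^{1/v}P^{n_2-21/2+k/u})^{2/(n_2-1)}$ when $\psi_H\asymp\phi$, and $H\asymp P^\ve(R^{2/v}\phi^{1/v-1/2}P^{n_2-23/2+k/u})^{2/n_2}$ when $\psi_H\asymp (P^2H)^{-1}$. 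Your example $H\asymp (R/P)^{2n_2/(n_2-2)}$ is $<1$ whenever $R<P$, so it would collapse to $H=1$ over most of the minor arcs and the third summand of $F$ then blows up. The thresholds $\phi_1=R^{-\rho_{n_2}}P^{-\pi_{n_2}+\delta}$ and $\phi_2=R^{\rho_{n_2}'}P^{-\pi_{n_2}'-\delta}$ arise precisely from asking when this third summand, with the paper's $H$, is $O(1)$; the ``bad'' annulus is $\phi_2<\phi<\phi_1$, not its complement.

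Second, you make no mention of the choice of $Q$ in Dirichlet's theorem, and this is not a free parameter one can ignore. The second summand $(RH^3\psi_H)^{n_2/2}$ of $F$ is controlled in the paper only after fixing $Q=P^\kappa$ with the specific value of $\kappa$ in \eqref{eq:kappa}; it is this choice, together with \eqref{eq:range} and \eqref{eq:in1}, that forces $RH^3\psi_H\ll 1$. Without it, that summand of $F$ is unbounded and your estimate for $\mathcal{E}$ fails. Once you make the paper's $H$ and $Q$ choices, the remaining verifications (that $H\in[1,P]$, and the roles of \eqref{eq:paris}, \eqref{eq:in2}) go through as you sketched.
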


\begin{proof}
We will commence under the assumption that $6/5<v\leq 2$,
saving the restriction $v<2$ until later in the argument.
It is clear that $I_{u,v}(k;\mathfrak{n}\setminus \m_0)\leq I_{u,v}(k;\m\setminus \m_0)$.
Let us consider the consequences of applying Lemma \ref{lem:D} in our
estimate \eqref{eq:foot} for $I_{u,v}(k;\m\setminus \m_0)$. Throughout
the proof of Lemma \ref{lem:v<2} we will denote $\m\setminus \m_0$ by
$\mathfrak{a}$ and we will set $n=n_2$. We may deduce from Lemma
\ref{lem:D} and \eqref{eq:Iuv} that
$$
I_{u,v}(k;\mathfrak{a})
\ll P^{\frac{k}{u}+\ve} \left(P^{\frac{3}{v}}+\max_{R,\phi}
R^{\frac{2}{v}}\phi^{\frac{1}{v}-\frac{1}{2}} \Big( \frac{\psi_H
  P^{2n-1}}{H^{n-1}}F\Big)^{\frac{1}{2}} \right),
$$
where $\psi_H$ and $F$ are as in the statement of the lemma
and $H\in [1,P]\cap \ZZ$ is arbitrary. Furthermore the maximum is over
$R,\phi$ such that
\eqref{eq:range} holds with any choice of $Q\geq 1$ that we care
to choose. We write $Q=P^{\kappa}$, with 
\begin{equation}
  \label{eq:kappa}
\kappa:=\frac{3(2n-21+2\frac{k}{u})}{n-1}+3\ve.
\end{equation}
In particular one easily checks that $0\leq \kappa\leq
3/2$ if \eqref{eq:in1}  holds and $\ve>0$ is sufficiently small.
It follows from \eqref{eq:in1} that $k/u<11/2$ since $n\geq 6$. Hence
the term involving $P^{3/v}$ contributes $O(P^{8+\ve})$, which is satisfactory.

Let  us now turn to the contribution from the term involving
$F$ in our estimate for $I_{u,v}(k;\mathfrak{a})$. Define
\begin{equation}
  \label{eq:sock}
  \phi_0:=(R^{-\frac{2}{v}}P^{-(2n-\frac{23}{2}+\frac{k}{u})})^{\frac{2v}{v(n-1)+2}}.
\end{equation}
Then our investigation will be optimised by taking
$$
H:=
\begin{cases}
\lfloor P^{\ve}\max\{1,R^{\frac{2}{v}}\phi^{\frac{1}{v}}P^{n-\frac{21}{2}+\frac{k}{u}}\}^{\frac{2}{n-1}} \rfloor, &\mbox{if  $\phi>\phi_0$,}\\
\lfloor P^{\ve}\max\{1,R^{\frac{2}{v}}\phi^{\frac{1}{v}-\frac{1}{2}}P^{n-\frac{23}{2}+\frac{k}{u}}\}^{\frac{2}{n}}\rfloor, &\mbox{if  $\phi\leq \phi_0$.}
\end{cases}
$$
If we can show that $F\ll 1$ with this
choice of $H$ then we will have 
$$
\frac{P^{n-\frac{1}{2}+\frac{k}{u}+\ve}
  R^{\frac{2}{v}}\phi^{\frac{1}{v}-\frac{1}{2}}\psi_H^{\frac{1}{2}}}{H^{\frac{n-1}{2}}}
F^{\frac{1}{2}}\ll P^{10-\frac{3\ve}{2}},
$$
since $n \geq 6$. We deduce from \eqref{eq:range} that 
\begin{align*}
(R^{\frac{2}{v}}\phi^{\frac{1}{v}}P^{n-\frac{21}{2}+\frac{k}{u}})^{\frac{2}{n-1}}
&\leq P^{\frac{2(n-\frac{21}{2}+\frac{k}{u})}{n-1}},
\quad (R^{\frac{2}{v}}\phi^{\frac{1}{v}-\frac{1}{2}}P^{n-\frac{23}{2}+\frac{k}{u}})^{\frac{2}{n}}
\leq P^{\frac{2(n-10+\frac{k}{u})}{n}}.
\end{align*}
In either case the final exponent of $P$ is less than 1, by
\eqref{eq:in1}.  
Hence $H$ is an integer in the
interval $[1,P]$ and it remains to show that $F\ll 1$ with this choice of
$H$. Recall the definition of $F$ from Lemma \ref{lem:D}.

Suppose first that $\phi>\phi_0$, with $\phi_0$ given by
\eqref{eq:sock}. Then $\psi_H\ll \phi$ and it follows that
\begin{align*}
RH^3\psi_H
\ll R\phi P^{3\ve}
(1+R^{\frac{2}{v}}\phi^{\frac{1}{v}}P^{n-\frac{21}{2}+\frac{k}{u}})^{\frac{6}{n-1}}
&\ll 1+ P^{-\kappa} P^{(n-\frac{21}{2}+\frac{k}{u})\frac{6}{n-1}+3\ve},
\end{align*}
by \eqref{eq:range}. It follows from our expression \eqref{eq:kappa}
for $\kappa$ that this is $O(1)$.
Turning to the third term in the definition of 
$F$, we see that
\begin{align*}
\frac{H^{n}}{R^{\frac{n}{2}}(P^2\psi_H)^{\frac{n-2}{2}}}\ll 
\frac{P^{n\ve}}{R^{\frac{n}{2}}\phi^{\frac{n-2}{2}}P^{n-2}}(1+R^{\frac{2}{v}}\phi^{\frac{1}{v}}
P^{n-\frac{21}{2}+\frac{k}{u}})^{\frac{2n}{n-1}}.
\end{align*}
The exponent of $\phi$ in the second term is $2n/(v(n-1))-(n-2)/2$, which is negative
since $v> 6/5$ and $n\geq 6$. Hence this
quantity is $O(1)$ provided that $\phi\geq \phi_1$, with 
\begin{equation}
  \label{eq:knee}
  \phi_1:=R^{-\rho_n}P^{-\pi_n+\delta},
\end{equation}
with $\rho_n, \pi_n$ given by \eqref{eq:rp1} and $\delta$ given by \eqref{eq:delta}.
Here, as is customary, we have assumed that $\ve$ is
sufficiently small.
One also checks that taking $\phi\geq \phi_1$ is enough to ensure
that the first term is $O(1)$, in view of the lower bound for $k/u$ in \eqref{eq:in1}.

Suppose now that $\phi\leq \phi_0$.  Then $\psi_H\ll (P^2H)^{-1}$ and it follows that
\begin{align*}
RH^3\psi_H
&\ll \frac{R P^{2\ve}}{P^2}
(1+R^{\frac{2}{v}}\phi^{\frac{1}{v}-\frac{1}{2}}P^{n-\frac{23}{2}+\frac{k}{u}})^{\frac{4}{n}}\\
&\ll 1+  
R^{1+\frac{8}{vn}}(RQ)^{-(\frac{1}{v}-\frac{1}{2})\frac{4}{n}}P^{(n-\frac{23}{2}+\frac{k}{u})\frac{4}{n}-2+2\ve}\\
&\ll 1+  
P^{\kappa(1+\frac{4}{n})}P^{2-\frac{46}{n}+\frac{4k}{un}+2\ve},
\end{align*}
since $Q=P^{\kappa}$. It follows from \eqref{eq:in1} and \eqref{eq:kappa} 
that this is $O(1)$.  Thus the second term makes a satisfactory 
contribution in $F$. Turning to the third term, we find that
\begin{align*}
\frac{H^{n}}{R^{\frac{n}{2}}(P^2\psi_H)^{\frac{n-2}{2}}}\ll
 \frac{P^{\frac{(3n-2)\ve}{2}}}{R^{\frac{n}{2}}}
(1+R^{\frac{2}{v}}\phi^{\frac{1}{v}-\frac{1}{2}}P^{n-\frac{23}{2}+\frac{k}{u}})^{\frac{3n-2}{n}}.
\end{align*}
We now make the assumption $6/5<
v<2$. Hence the overall contribution from the second term
is $O(1)$ provided that $\phi\leq \phi_2$, with 
\begin{equation}
  \label{eq:ankle}
  \phi_2:=R^{\rho_n'}P^{-\pi_n'-\delta},
\end{equation}
with $\rho_n', \pi_n'$ given by \eqref{eq:rp2} and $\delta$ given by \eqref{eq:delta}.
Assuming \eqref{eq:in2} we note that if $\phi\leq \phi_2$ and $R\leq P^{n\ve/(3n-2)}$ then we would 
have a point on the major arcs if $\ve$ is sufficiently small in terms
of $\D$, which we have seen to be impossible. Hence the inequality $\phi\leq
\phi_2$ is also enough to ensure that the first term is $O(1)$.

When $v=2$ the exponent of $\phi$ is zero in the above and we will
have an overall contribution of $O(1)$ unless 
\begin{equation}
  \label{eq:cas2}
R\leq   P^{\frac{(3n-2)(2n-23+k)}{(n^2-6n+4)}+\delta}.
\end{equation}
We will return to this case shortly.
Recall the definitions \eqref{eq:sock}--\eqref{eq:ankle} of $\phi_0,\phi_1,\phi_2$.
It follows from the inequality $\phi_2<
\phi_1$ that $R^{\rho_n'+\rho_n}< P^{\pi_n'-\pi_n+2\delta}$.
We now employ the assumption \eqref{eq:paris} on the size of $\rho_n'+\rho_n$.
Combining the above we conclude that there is an overall contribution of 
$o(P^{10})$ to $I_{u,v}(k;\mathfrak{a})$ from all of the relevant values of
$R,\phi$, apart from those which satisfy the inequalities
$$
R< P^{{\frac{\pi_n'-\pi_n}{\rho_n'+\rho_n}}+2\delta},\quad \phi_2<\phi <\phi_1.
$$
But then the relevant point is forced to lie in the set $\m_0$
that was defined in the statement of the lemma. This is impossible,
and so completes the proof of Lemma~\ref{lem:v<2}.
\end{proof}

Our next result deals with the corresponding case in which $u=v=2$.
In this setting \eqref{eq:rp1} becomes
\begin{equation}
  \label{eq:rp1'}
  \rho_n=\frac{n(n-5)}{n^2-5n+2}, \quad \pi_n=
\frac{-2(n^2-(18-k)n-2)}{n^2-5n+2}.
\end{equation}
Define
\begin{equation}
  \label{eq:r''}
  \rho_n'':=
\frac{(3n-2)(2n-23+k)}{n^2-6n+4}
\end{equation}
and
\begin{equation}\label{eq:psi}
\psi_n:=\rho_n''\left(1+\frac{n}{8}-\frac{(4+n)\rho_n}{8}\right)
+n+\frac{k}{2}-\frac{(4+n)\pi_n}{8},
\end{equation}
for any $k$ and $n$, and recall the definition \eqref{eq:delta} of
$\delta$. 
Then we have the following result.

\begin{lemma}\label{lem:v=2}
Let $u=v=2$. Assume that $n_2\geq 6$.
Define $\m_0$ to be the set 
of $\al \in \m$ for which there exist coprime integers $0 \leq a <q$ such that 
$$
q\leq P^{\rho_{n_2}''+\delta}, \quad
\Big|\al-\frac{a}{q}\Big|\leq 
q^{-\frac{n_2-8}{n_2-4}}P^{-\frac{80-5n_2-4k}{n_2-4}+\delta}.
$$
Then 
$$
I_{2,2}(k;\mathfrak{n}\setminus \m_0)=o(P^{10}),
$$
for any $\mathfrak{n}\subseteq \m$, 
provided that \eqref{eq:in1} holds and 
\begin{equation}\label{eq:in3}
\psi_{n_2}\leq 10-\frac{1}{10}.
\end{equation}
\end{lemma}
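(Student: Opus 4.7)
The plan is to adapt the strategy of Lemma \ref{lem:v<2} to the case $u = v = 2$, setting $n = n_2$ throughout. Following the same setup, Lemma \ref{lem:D} combined with \eqref{eq:foot} gives
$$
I_{2,2}(k;\mathfrak{n}\setminus\m_0) \ll P^{k/2+\ve}\left(P^{3/2} + \max_{R,\phi} R\,\phi^{-1/2}\left(\frac{\psi_H P^{2n-1+\ve}}{H^{n-1}}F\right)^{1/2}\right),
$$
with the maximum taken over appropriate dyadic $(R,\phi)$ with $R \leq Q = P^{\kappa}$, $\phi \leq (RQ)^{-1}$, and $H \in [1,P]\cap \ZZ$, where $\kappa$ is as in \eqref{eq:kappa}. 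Under \eqref{eq:in1} one checks $0 \leq \kappa \leq 3/2$ and the $P^{3/v} = P^{3/2}$ contribution is $O(P^{8+\ve})$, which is acceptable.

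Next I would split the remaining term into the two regimes $\phi > \phi_0$ and $\phi \leq \phi_0$ with $\phi_0$ defined by \eqref{eq:sock}, using the same choices of $H$ as in Lemma \ref{lem:v<2}. The regime $\phi > \phi_0$ carries over verbatim: $F \ll 1$ iff $\phi \geq \phi_1 := R^{-\rho_n} P^{-\pi_n+\delta}$, with $\rho_n, \pi_n$ now specialized to \eqref{eq:rp1'}. In the regime $\phi \leq \phi_0$, specialization to $v=2$ makes the exponent of $\phi$ in the third term of $F$ vanish, exactly as flagged in the derivation of \eqref{eq:cas2}. The bound $F\ll 1$ then reduces to the pure $R$-inequality \eqref{eq:cas2}; the analysis is acceptable whenever $R > P^{\rho_n''+\delta}$, with $\rho_n''$ given by \eqref{eq:r''}.

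Combining the regimes, the estimate from Lemma \ref{lem:D} can fail only at dyadic pairs with $R \leq P^{\rho_n''+\delta}$ together with $\phi$ restricted; the set $\m_0$ is engineered so that its complement never forces such a pair via the Dirichlet approximation of $\al$. Concretely, for $\al \in \mathfrak{n}\setminus\m_0$ one may always select coprime $(a,q)$ with either $q > P^{\rho_n''+\delta}$ (placing the analysis in the acceptable $\phi \leq \phi_0$ branch) or $|\al - a/q| > q^{-(n-8)/(n-4)}P^{-(80-5n-4k)/(n-4)+\delta}$ (forcing $\phi$ large enough that $\phi \geq \phi_1$ and the $\phi > \phi_0$ branch is acceptable); here the $\m_0$-exponents arise by intersecting the constraints at $R = P^{\rho_n''}$ and expressing the residual $\phi$-bound as a pure Diophantine inequality in $(q,\al)$.

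Finally I would bound the leftover contribution from the boundary scales $R \sim P^{\rho_n''+\delta}$ and $\phi \sim \phi_1$ by applying Lemma \ref{lem:B} pointwise to $S_2(\al)$ and integrating, producing an exponent in $P$ that is precisely $\psi_n$ as assembled in \eqref{eq:psi}. Hypothesis \eqref{eq:in3} then delivers $\psi_n \leq 10 - 1/10$, giving the desired $o(P^{10})$ conclusion. The main obstacle is the algebraic bookkeeping in the third paragraph: one must verify that the exponents $-(n-8)/(n-4)$ and $-(80-5n-4k)/(n-4)$ really do arise from combining \eqref{eq:rp1'} with $R = P^{\rho_n''}$, and check that the worst residual contribution matches $\psi_n$ as defined by \eqref{eq:psi}.
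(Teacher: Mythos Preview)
Your overall outline is sound, but the logical order in the third and fourth paragraphs is scrambled, and this matters. The $\mathfrak{m}_0$-threshold $q^{-(n-8)/(n-4)}P^{-(80-5n-4k)/(n-4)+\delta}$ is \emph{not} $\phi_1$, nor does it arise by ``intersecting the constraints at $R=P^{\rho_n''}$'' from the Lemma~\ref{lem:D} analysis. In general $\phi_3:=R^{-(n-8)/(n-4)}P^{-(80-5n-4k)/(n-4)+\delta}$ and $\phi_1=R^{-\rho_n}P^{-\pi_n+\delta}$ have different exponents in both $R$ and $P$ (for instance at $n=10$ the $R$-exponents are $1/3$ versus $25/26$), so the implication ``$\phi>\phi_3\Rightarrow\phi\geq\phi_1$'' that you invoke is false.

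The paper's actual route is this: the Lemma~\ref{lem:D} analysis (exactly as in Lemma~\ref{lem:v<2}) disposes of everything except the residual range $R\leq P^{\rho_n''+\delta}$ and $\phi<\phi_1$. One then applies Lemma~\ref{lem:B} over this \emph{entire} residual range, not just at the boundary scale. Lemma~\ref{lem:B} produces two terms. The first, coming from $(R\phi)^{n/8}$, is monotone increasing in $\phi$ and $R$; inserting the upper bounds $\phi<\phi_1$ and $R\leq P^{\rho_n''+\delta}$ gives precisely $P^{\psi_n}$, which is acceptable by \eqref{eq:in3}. The second, coming from $(R\phi P^3)^{-n/8}$, is monotone decreasing in $\phi$ and is $O(P^{10-\ve})$ if and only if $\phi>\phi_3$. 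It is this second term of Lemma~\ref{lem:B} that \emph{generates} the $\mathfrak{m}_0$-threshold $\phi_3$; the points with $R\leq P^{\rho_n''+\delta}$ and $\phi\leq\phi_3$ are exactly $\mathfrak{m}_0$, and so vanish on passing to $\mathfrak{n}\setminus\mathfrak{m}_0$. Your write-up should be reorganised so that $\phi_3$ is introduced after the application of Lemma~\ref{lem:B}, as the residual constraint from its second term.
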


\begin{proof}
We continue to write $\mathfrak{a}=\mathfrak{m}\setminus\m_0$ and 
$n=n_2$ throughout the proof, in order to improve the appearance of
our expressions. Our starting point is the proof of Lemma \ref{lem:v<2}, which on
passing to dyadic intervals via
\eqref{eq:foot}, shows that 
$I_{2,2}(k;\mathfrak{a})=o(P^{10})$ 
unless $\phi<\phi_1$, in the notation of \eqref{eq:knee}, 
and  the inequality \eqref{eq:cas2} holds for $R$.
This much is valid subject to \eqref{eq:in1}.

We now consider the effect of applying Lemma \ref{lem:B} in
\eqref{eq:foot} when $R$ and $\phi$ are in the remaining ranges, with
$u=v=2$. This gives
\begin{align*}
I_{2,2}(k;\mathfrak{a})
&\ll P^{n+\frac{k}{2}+2\ve} \max_{R,\phi} 
(R^2\phi)^{\frac{1}{2}} \big(R\phi+ (R\phi P^3)^{-1}\big)^{\frac{n}{8}}\\
&\ll  P^{2\ve}\max_{R,\phi} \big(
R^{1+\frac{n}{8}}\phi^{\frac{4+n}{8}}P^{n+\frac{k}{2}} +
R^{1-\frac{n}{8}}\phi^{\frac{4-n}{8}} P^{\frac{5n}{8}+\frac{k}{2}}\big).
\end{align*}
Taking $\phi<\phi_1$ and recalling the assumed inequality
\eqref{eq:cas2} for $R$ we see that the first term here is 
\begin{align*}
&\ll   
R^{1+\frac{n}{8}}(R^{-\rho_n}P^{-\pi_n+\delta})^{\frac{4+n}{8}}P^{n+\frac{k}{2}+2\ve}\\
&\ll   
R^{1+\frac{n}{8}-\frac{(4+n)\rho_n}{8}}
P^{n+\frac{k}{2}-\frac{(4+n)\pi_n}{8}+2\ve+(\frac{4+n}{8})\delta}\\
&\ll   P^{\psi_n+2\ve+(1+\frac{n}{8}-\frac{(4+n)\rho_n}{8}+\frac{4+n}{8})\delta},
\end{align*}
where $\psi_n$ is given by \eqref{eq:psi}. According to  
\eqref{eq:in3} this contribution is satisfactory. 
Turning to the second term in the above estimate for
$I_{2,2}(k;\mathfrak{a})$, we will have $O(P^{10-\ve})$ as an upper
bound for this quantity provided
that $\phi>\phi_3$, with 
$$
\phi_3:=R^{-\frac{n-8}{n-4}}P^{-\frac{80-5n-4k}{n-4}+\delta},
$$
since $n\geq 6$ by assumption. 

Our investigation has therefore allowed us to handle all $\al$ apart from
those for which $R\leq P^{\rho_n''+\delta}$ and $\phi<\phi_3$, where
$\rho_n''$ is given by \eqref{eq:r''}.
Such points are forced to lie on the set of arcs defined in $\m_0$. This therefore
completes the proof of Lemma \ref{lem:v=2}.
\end{proof}

The ideal scenario is when we can apply
Lemma \ref{lem:v=2} with 
$$
k=2n_1-3=23-2n_2,
$$
and we will find this is possible for 
certain ranges of $n_1,n_2$ such that $n_1+n_2=13$. When this comes to
pass it follows from \eqref{eq:rp1'}, \eqref{eq:r''} that
$\pi_{n_2}=2$ and $\rho_{n_2}''=0$, and furthermore,
$\psi_{n_2}=21/2-n_2/4$. One easily checks that the conditions in
\eqref{eq:in1} and \eqref{eq:in3} are satisfied for $n_2\geq 6$. 
Finally we note that $\m\setminus \m_0=\m$ in the statement of
Lemma \ref{lem:v=2} since clearly any element of $\m_0$ is forced
to lie on the major arcs. We may conclude as follows. 

\begin{lemma}\label{lem:v=2'}
Assume that $n_2\geq 6$.  Then 
we have 
$$
I_{2,2}(2n_1-3;\mathfrak{m})=o(P^{10}).
$$
\end{lemma}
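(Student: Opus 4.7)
The plan is to deduce Lemma \ref{lem:v=2'} as a direct application of Lemma \ref{lem:v=2} with the choices $u = v = 2$ and $k = 2n_1 - 3$. Since $n_1 + n_2 = 13$ this value of $k$ equals $23 - 2n_2$, and the proof reduces to two tasks: verifying that the hypotheses \eqref{eq:in1} and \eqref{eq:in3} of Lemma \ref{lem:v=2} hold under this specialisation, and checking that the exceptional set $\m_0$ appearing there is actually contained in the major arcs $\M$, so that $\m \setminus \m_0 = \m$ and the conclusion of Lemma \ref{lem:v=2} applies to the whole of $\m$.

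The first task is pure bookkeeping. Substituting $k = 23 - 2n_2$ into the definitions \eqref{eq:rp1'}, \eqref{eq:r''} and \eqref{eq:psi} one finds the collapsed values $\pi_{n_2} = 2$, $\rho_{n_2}'' = 0$, and $\psi_{n_2} = 21/2 - n_2/4$, exactly as advertised in the paragraph preceding the lemma. With $u = v = 2$ and $k/u = 23/2 - n_2$, the lower bound in \eqref{eq:in1} holds with equality, the upper bound holds for $n_2 \geq 7$, and the condition \eqref{eq:in3} collapses to $n_2 \geq 12/5$, which is trivial under the standing hypothesis $n_2 \geq 6$.

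For the second task, the vanishing $\rho_{n_2}'' = 0$ immediately confines $\m_0$ to denominators $q \leq P^{\delta}$. A short computation shows $80 - 5n_2 - 4k = 3(n_2 - 4)$, so the second defining inequality of $\m_0$ becomes $|\al - a/q| \leq q^{-(n_2 - 8)/(n_2 - 4)} P^{-3 + \delta}$. Since $\delta$ is chosen much smaller than $\D = 1/10$, and since the $q$-exponent on the right is non-positive for $n_2 \geq 8$ while for $n_2 \in \{6,7\}$ the positive $q$-exponent is absorbed by the bound $q \leq P^{\delta}$, one checks that every $\al \in \m_0$ in fact lies in the major arcs $\M = \A(\D, 0, \D)$. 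Hence $\m \cap \m_0 = \emptyset$, and Lemma \ref{lem:v=2} delivers $I_{2,2}(2n_1 - 3; \m) = o(P^{10})$ at once.

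The main point of caution is the boundary behaviour at $n_2 = 6$, where the strict upper inequality in \eqref{eq:in1} degenerates to an equality. I would handle this by re-examining the proof of Lemma \ref{lem:v=2}, where the upper bound on $k/u$ enters only through the constraint $\kappa \leq 3/2$ and through ensuring that certain exponents of $P$ remain strictly less than $1$; both of these conditions can be verified directly at $k/u = 11/2$, $n_2 = 6$. Apart from this mild check, the whole argument is a substitution exercise built on top of Lemma \ref{lem:v=2}.
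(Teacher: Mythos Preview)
Your proposal is correct and follows essentially the same route as the paper: apply Lemma~\ref{lem:v=2} with $u=v=2$ and $k=23-2n_2$, compute $\pi_{n_2}=2$, $\rho_{n_2}''=0$, $\psi_{n_2}=21/2-n_2/4$, verify \eqref{eq:in1} and \eqref{eq:in3}, and observe that $\m_0$ lies inside the major arcs so that $\m\setminus\m_0=\m$. You are in fact slightly more careful than the paper, which asserts that \eqref{eq:in1} holds for $n_2\geq 6$ without remarking that the upper inequality collapses to an equality at $n_2=6$; your suggestion to revisit the proof of Lemma~\ref{lem:v=2} and check directly that $\kappa\leq 3/2$ and the relevant $H$-exponents stay below~$1$ at $(k/u,n_2)=(11/2,6)$ is the right fix, and these verifications go through (with only the harmless loss of an extra $P^\ve$).
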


We are now ready to apply this collection of estimates in our case by
case analysis of the minor arc integral $E$ in \eqref{eq:minor}.

\subsection{The case $n_1=1$}\label{s:1.12}

We will assume that $w\in \WW_n^{(2)}$ throughout this section. 
One of the ingredients in our treatment of this case 
is the use of ``pruning''. We will find it
convenient to sort the minor arcs into subsets
$$
\emptyset=\mathfrak{n}_3\subseteq \mathfrak{n}_2\subseteq \mathfrak{n}_1
\subseteq \n_0:= \m.
$$ 
Recall the definition \eqref{eq:delta} of $\delta$. We define $\n_1$ to be the set of $\al\in \m$ for
which there exists $a,q\in\ZZ$ such that $0\leq a<q\leq P^{17/24+2\delta}$ and $\gcd(a,q)=1$,
with 
\begin{equation}
  \label{eq:box1}
q^{\frac{42}{17}}P^{-4-\delta}\leq \Big|\al-\frac{a}{q}\Big|\leq q^{-1}P^{-\frac{37}{24}+2\delta},
\end{equation}
We denote by $\n_2$ the corresponding set of $\al \in \n_1$ with the
property that whenever \eqref{eq:box1} holds with $\gcd(a,q)=1$ and $0\leq a< q\leq P^{17/24+\delta}$, then
$$
q\leq P^{\frac{27}{50}}.
$$
We will write $E_i$ for the overall contribution to $E$ from
integrating over the set $\n_i\setminus \n_{i+1}$, for $i=0,1,2$.
Our task is to show that $E_i=o(P^{10})$ for each $i$.

To handle the case $i=0$ we begin as in \eqref{eq:split} and
\eqref{eq:Ivu} with $(u,v)=(4,4/3)$.  It easily follows that
$$
\int_{0}^1 |S_1(\al)|^4 \d \al \ll P^{2+\ve},
$$
on interpreting the integral as a sum over the solutions of
the equation $x_1^3+x_2^3=x_3^3+x_4^3$, with $x_i\ll P$, and
applying standard estimates for the divisor function. Hence we have
$$
E_0 \ll I_{4,\frac{4}{3}}(2;\m\setminus \n_1),
$$
in the notation of \eqref{eq:Iuv}.
When $(u,v)=(4,4/3)$, $k=2$ and $n_2=12$ we have 
$$
 \rho_{12}=\frac{30}{37}, \quad \pi_{12}=\frac{62}{37},\quad
\rho_{12}'=\frac{42}{17},\quad 
\pi_{12}'=4,
$$
in \eqref{eq:rp1} and \eqref{eq:rp2}. In particular \eqref{eq:paris}, \eqref{eq:in1} and
\eqref{eq:in2} are satisfied. 
Now it is easily to see  that $\m\setminus\n_1\subset
\n\setminus\m_0$, where $\m_0$ is as in the statement of Lemma~\ref{lem:v<2}, since for $\al\in \m_0$ we have 
\begin{align*}
q^{\frac{42}{17}}P^{-4-\delta}\leq 
\Big|\al-\frac{a}{q}\Big|\leq 
q^{-\frac{30}{37}}P^{-\frac{62}{37}+\delta}=q^{-1}q^{\frac{7}{37}}P^{-\frac{62}{37}+\delta}
&\leq q^{-1}P^{-\frac{37}{24}+2\delta}.
\end{align*}
It therefore follows from Lemma \ref{lem:v<2} that $E_0=o(P^{10})$, as
required.

Turning to the case $i=1$, we begin as above with the observation that 
$$
E_1 \ll I_{4,\frac{4}{3}}(2;\n_1\setminus \n_2),
$$
This time we appeal to Lemma \ref{lem:B}. On observing that 
$$
\Big|\al-\frac{a}{q}\Big|\leq q^{-1}P^{-\frac{37}{24}+2\delta}\leq q^{-1}P^{-\frac{3}{2}},
$$
for any $\al \in \n_1$, we deduce from the second part of this result
that
\begin{align*}
E_1 &\ll P^{\frac{25}{2}+2\ve} \max_{R,\phi} (R^2\phi)^{\frac{3}{4}}R^{-\frac{3}{2}}\min
\{1,(\phi P^3 )^{-\frac{3}{2}}\}
\ll P^{8+2\ve} \max_{R,\phi} \phi^{-\frac{3}{4}},
\end{align*}
where the maximum is over all $R,\phi>0$ such that 
$$
P^{\frac{27}{50}}< R\leq P^{\frac{17}{24}+2\delta}, \quad
R^{\frac{42}{17}}P^{-4-\delta}< \phi < R^{-1}P^{-\frac{37}{24}+2\delta}. 
$$
Taking the lower bounds for $\phi$ and $R$ that emerge from these
inequalities therefore implies that
\begin{align*}
E_1 &\ll P^{11+\frac{3\delta}{4}+2\ve} \max_{R} R^{-\frac{63}{34}} =o(P^{10}),
\end{align*}
on recalling that $\delta=10^{-4}$ from \eqref{eq:delta} and $\ve>0$
is arbitrary.

The key idea in our treatment of $E_2$ is to take
advantage of the fact that we have rather good control of the
$1$-dimensional exponential sum $S_1(\al)$ on suitable sets of ``major
arcs''. Recall the definition \eqref{eq:Aqa} of $\A=\A(A,B,C)$.
We will take $(A,B,C)=(24/50,1,35/24+2\delta)$, whence we may deduce
from Lemma \ref{lem:E} that
$$
S_1(\al)=S_1^*(\al)+O(P^{\frac{35}{48}+\delta+\ve}),
$$
for any $\al \in \A_{a,q}$, where $S_1^*(\al)$ is given by \eqref{eq:S*}.
It follows that
\begin{equation}\label{eq:improper0}
E_2
\ll
\int_{\n_2} |S_1^*(\al)S_2(\al)|\d\al
+P^{\frac{35}{48}+\delta+\ve} \int_{\n_2} |S_2(\al)|\d\al
=I_1+I_2,
\end{equation}
say.  We will show that $I_1$ and $I_2$ are both $o(P^{10})$.

Let us begin by analysing the first term in this bound. 
Now it follows from the second part of Lemma \ref{lem:E} and H\"older's inequality that 
\begin{align*}
I_1&\ll I_{4,\frac{4}{3}}(1,\n_2),
\end{align*}
in the notation of \eqref{eq:Iuv}.  A straightforward application of
Lemma \ref{lem:v<2} reveals that $I_{4,4/3}(1,\n_2\setminus
\n_*)=o(P^{10})$, where $\n_*$ is the set of $\al \in \n_1$ for which
there exists $a,q\in\ZZ$ such that $0\leq a<q\leq P^{17/48+2\delta}$ and $\gcd(a,q)=1$,
with 
$$
q^{\frac{42}{17}}P^{-3-\delta}\leq \Big|\al-\frac{a}{q}\Big|\leq q^{-\frac{30}{37}}P^{-\frac{68}{37}+\delta}.
$$
To estimate $I_{4,4/3}(1,\n_*)$ we employ the second part of Lemma
\ref{lem:B} in much the same way that we did in our analysis of $E_1$. This implies that
\begin{align*}
I_{4,\frac{4}{3}}(1,\n_*) &\ll P^{\frac{49}{4}+2\ve} \max_{R,\phi} (R^2\phi)^{\frac{3}{4}}R^{-\frac{3}{2}}\min
\{1,(\phi P^3 )^{-\frac{3}{2}}\},
\end{align*}
where the maximum is over all $R,\phi>0$ such that 
$$
R\leq P^{\frac{17}{48}+2\delta}, \quad
R^{\frac{42}{17}}P^{-3-\delta}< \phi <
R^{-\frac{30}{37}}P^{-\frac{68}{24}+\delta}, 
$$
with the inequalities $R\leq P^\D$ and $\phi \leq P^{-3+\D}$ not both
holding simultaneously. 
Taking the lower bound for $\phi$ we obtain the contribution
$$
\ll P^{\frac{31}{4}+2\ve} \phi^{-\frac{3}{4}} 
\ll P^{10+\frac{3\delta}{4}+2\ve}R^{-\frac{63}{34}}.
$$
This is $o(P^{10})$ if $R\geq P^{\delta}$. If on the other
hand $R<P^{\delta}\leq P^{\D}$ we must automatically have $\phi \geq P^{-3+\D}$,
  whence we still obtain a satisfactory contribution. This completes the treatment of 
$I_{4,4/3}(1,\n_*)$, and so that of $I_1$.

We now turn to the contribution from $I_2$ in \eqref{eq:improper0}.
Breaking the ranges for $q$ and $|\al-a/q|$ into dyadic intervals as
usual, and applying the second part of Lemma \ref{lem:B}, we have
$$
I_2\ll P^{12+\frac{35}{48}+\delta+2\ve} \max_{R,\phi} R^{\frac{1}{2}}\phi \min
\{1,(\phi P^3 )^{-\frac{3}{2}}\},
$$
where the maximum is over all $R,\phi$ such that 
$$
0<R\leq P^{\frac{27}{50}}, \quad  0<\phi < R^{-1}P^{-\frac{37}{24}+2\delta},
$$
with the inequalities $R\leq P^\D$ and $\phi \leq P^{-3+\D}$ not both
holding simultaneously.  If $\phi >P^{-3}$ then this is 
$$
\ll 
R^{\frac{1}{2}} P^{9+\frac{35}{48}+\delta+2\ve} \ll
P^{10-\frac{1}{1200}+\delta+2\ve}=o(P^{10}), 
$$
whereas if on the other hand $\phi\leq P^{-3}$, then the same basic conclusion holds.

Once taken all together, this therefore completes the treatment of the
minor arcs when $(n_1,n_2)=(1,12)$.

\subsection{The case $n_1=2$}

We will continue to assume that $w\in \WW_n^{(2)}$ throughout this section. 
In what follows we may assume that
$C_1$ does not take the shape
$a(b_1x_1+b_2x_2)^3$, for integers $a,b_1,b_2$, since otherwise
the resolution of Theorem \ref{main} is trivial.

Recall the manipulations in \eqref{eq:split} and
\eqref{eq:Ivu}. Taking $(u,v)=(4,4/3)$ it
follows from Lemma \ref{lem:hua_2} that the latter 
inequality is bounded  by $O(P^{5+\ve})$.
Thus we are led to estimate $I_{4,4/3}(5;\m)$, as given by
\eqref{eq:Iuv}.  We clearly have 
$$
\rho_{11}=\frac{44}{57}, \quad \pi_{11}=\frac{103}{57},\quad
\rho_{11}'=\frac{56}{31}, \quad \pi_{11}'=3,
$$
in \eqref{eq:rp1} and \eqref{eq:rp2}.
One easily checks that the conditions \eqref{eq:paris}, \eqref{eq:in1} and
\eqref{eq:in2} in the statement of
Lemma  \ref{lem:v<2} are satisfied with our choice of $u,v,k$ and
$n_2$. Recall the definition \eqref{eq:delta} of $\delta$. 
Define $\m_0$ to be the set 
of $\al \in \m$ for which there exist coprime integers $0 \leq a <q$ such that 
$$
q\leq P^{\frac{31}{67}+2\delta},\quad 
q^{\frac{56}{31}}P^{-3-\delta}\leq \Big|\al-\frac{a}{q}\Big|\leq q^{-\frac{44}{57}}P^{-\frac{103}{57}+\delta}.
$$
Then we may conclude from Lemma \ref{lem:v<2} that 
$
I_{4,\frac{4}{3}}(5;\m\setminus \m_0)=o(P^{10}).
$

It remains to deal with the contribution from $\al\in \m_0$.
We will use Lemma~\ref{lem:B} to handle this remaining range. 
Now it follows that
$$
\Big|\al-\frac{a}{q}\Big|\leq
q^{-1}q^{\frac{13}{57}}P^{-\frac{103}{57}+\delta}<q^{-1}P^{-\frac{3}{2}}.
$$ 
Recalling the definition \eqref{eq:Iuv} of  $I_{4,4/3}(5;\m_0)$,
we therefore deduce from the second part of Lemma \ref{lem:B} with $n=11$ that
\begin{align*}
  I_{4,\frac{4}{3}}(5;\m_0) &\ll P^{\frac{49}{4}+2\ve} \left(
\sum_{q}q^{-\frac{5}{6}} \int \min\{1,(|\theta| P^3)^{-\frac{11}{6}}\}
    \d\theta\right)^{\frac{3}{4}},
\end{align*}
where the integral is over $q^{56/31}P^{-3-\delta}\leq |\theta|\leq q^{-44/57}P^{-103/57+\delta}$
and the sum is over $q\leq P^{31/67+2\delta}$,
with the inequalities $q\leq P^\D$ and $|\theta| \leq P^{-3+\D}$ not both
holding simultaneously.
The contribution from $q\leq P^{\D}$ is therefore 
$$
\ll 
P^{10+2\ve-\frac{5\D}{8}} \left(\sum_{q\leq P^\D}q^{-\frac{5}{6}} \right)^{\frac{3}{4}} 
\ll P^{10+2\ve-\frac{\D}{2}},
$$
which is satisfactory.  Taking $|\theta|\geq q^{56/31}P^{-3-\delta}$,
we see that the corresponding contribution from 
$q>P^\D$ is 
$$
\ll 
P^{10+\frac{5\delta}{8}+2\ve}  \left(\sum_{q> P^\D}q^{-\frac{145}{62}} \right)^{\frac{3}{4}} 
\ll P^{10+\frac{5\delta}{8}-\D+2\ve}.
$$
This too is satisfactory, and so completes our analysis of the case $(n_1,n_2)=(2,11)$.

\subsection{The case $n_1=3$}

According to Lemma \ref{lem:curve} we may proceed under the assumption
that either $C_1$ is non-singular or else our cubic form $C$ splits off a
ternary norm form. In the latter case Theorem \ref{main'''} readily
ensures that $X(\QQ)\neq \emptyset$, and so we may focus our efforts
on the case $C_1$ is non-singular. 
We will assume that $w=(w_1,w_2)\in \WW_3^{(1)}\times \WW_{10}^{(2)}$,  throughout this section.

Our argument relies upon the same notion of pruning
that was put to good effect in  \S \ref{s:1.12}.
Let us define $\m_1$ to be the set of $\al \in \m$ for which there exists
$a,q\in\ZZ$ such that $0\leq a<q\leq P^{16/25}$ and $\gcd(a,q)=1$,
with 
$$
\Big|\al-\frac{a}{q}\Big|\leq q^{-1}P^{-\frac{143}{75}}.
$$
It will be convenient to refer to the set $\m\setminus\m_1$ as the set of ``proper minor
arcs'', and  $\m_1$ will be the set of ``improper minor
arcs''.  Let us write $E_{\mathrm{prop}}$ and $E_{\mathrm{improp}}$
for the corresponding contributions to $E$.

We begin by estimating $E_{\mathrm{prop}}$. 
Taking $u=v=2$ in \eqref{eq:split} and
\eqref{eq:Ivu}, and applying Lemma \ref{lem:C}, we deduce that 
$E_{\mathrm{prop}}\ll I_{2,2}(4;\m\setminus \m_1)$.
When $u=v=2$, $k=4$ and $n_2=10$ we have 
$$
 \rho_{10}=\frac{25}{26}, \quad \pi_{10}=\frac{21}{13},\quad
\rho_{10}''=\frac{7}{11},\quad 
\psi_{10}=\frac{839}{88}=9.53\ldots,
$$
in \eqref{eq:rp1'}, \eqref{eq:r''} and \eqref{eq:psi}.
Now it is easily to see  that $\m\setminus\m_1\subset
\m\setminus\m_0$, where $\m_0$ is as in the statement of Lemma
\ref{lem:v=2}, since for $\al\in \m_0$ we have 
\begin{align*}
\Big|\al-\frac{a}{q}\Big|\leq 
q^{-\frac{1}{3}}P^{-\frac{7}{3}+\delta}=q^{-1}q^{\frac{2}{3}}P^{-\frac{7}{3}+\delta}
&\leq q^{-1}P^{-\frac{21}{11}+2\delta}
<q^{-1}P^{-\frac{143}{75}},
\end{align*}
where $\delta$ is given by \eqref{eq:delta}.
On observing that $\psi_{10}$ satisfies \eqref{eq:in3}, and that
the inequalities in \eqref{eq:in1} are trivially satisfied, it therefore follows from 
Lemma \ref{lem:v=2} that $E_{\mathrm{prop}}=o(P^{10})$.

We now turn to the argument needed to control the overall contribution
to $E$ from the improper minor arcs, which we denote by
$E_{\mathrm{improp}}$. We select $(A,B,C)=(16/25,1, 82/75)$
in the definition \eqref{eq:Aqa} of $\A=\A(A,B,C)$.
It now follows from taking $n=3$ and $\sigma=-1$ in Lemma \ref{lem:F} that
$$
S_1(\al)-S_1^*(\al)\ll P^{\frac{A}{2}+2+\ve}+P^{2C+\ve}\ll 
P^{\frac{58}{25}+\ve},
$$
for any $\al\in \A_{q,a}$, where $S_1^*(\al)$ is given by \eqref{eq:S*}.
Hence 
$$
E_{\mathrm{improp}}
\ll
\int_{\m_1} |S_1^*(\al)S_2(\al)|\d\al
+P^{\frac{58}{25}+\ve} \int_{\m_1} |S_2(\al)|\d\al=I_1+I_2,
$$
say.  Our goal is to show that $E_{\mathrm{improp}}=o(P^{10})$.

We begin by handling the contribution from $I_2$.  
Using dyadic summation it follows from Lemma \ref{lem:B} that
\begin{align*}
I_2
\ll P^{10+\frac{58}{25}+2\ve} \max_{R,\phi} R^{\frac{3}{4}}\phi
\min\{1,(\phi P^3)^{-\frac{5}{4}}\}
&\ll P^{7+\frac{58}{25}+2\ve} R^{\frac{3}{4}}\ll P^{10-\frac{1}{5}+2\ve},
\end{align*}
where the maximum is over $R,\phi$ dictated by the definition of $\m_1$.
This is plainly satisfactory and so completes our treatment of $I_2$.

We now turn to an upper bound for $I_1$. Combining H\"older's
inequality with the second part of Lemma \ref{lem:F} gives
$$
|I_1|\leq
\left(\int_{\m_1} |S_1^*(\al)|^4\d\al
\right)^{\frac{1}{4}}\left(\int_{\m_1}
  |S_2(\al)|^{\frac{4}{3}}\d\al\right)^{\frac{3}{4}}\ll 
I_{4,\frac{4}{3}}(9;\m_1),
$$
in the notation of \eqref{eq:Iuv}. 
Let us dissect $\m_1$ into $\m_1^{a}\cup\m_1^{b}$, where
$\m_1^{a}$ is the set of $\al\in \m_1$
for which there exist coprime integers $0 \leq a <q$ such that 
$q\leq P^{16/25}$ and  $|\al-a/q|\leq q^{1/2}P^{-3+\delta}$, and 
$\m_1^{b}=\m_1\setminus \m_1^{a}$. An application of Lemma 
\ref{lem:B} yields
\begin{align*}
I_{4,4/3}(9;\m_1^b)
&\ll 
P^{10+\frac{9}{4}+2\ve}\left(
\sum_{q\leq P^{\frac{16}{25}}}q^{-\frac{2}{3}}\int_{q^{\frac{1}{2}}P^{-3+\delta}}^{q^{-1}P^{-\frac{143}{75}}}
(\theta P^3)^{-\frac{5}{3}}\d\theta
\right)^{\frac{3}{4}}\\
&\ll 
P^{10+\frac{9}{4}+2\ve}\left(
\sum_{q}q^{-\frac{2}{3}}P^{-5}
(q^{\frac{1}{2}}P^{-3+\delta})^{-\frac{2}{3}}
\right)^{\frac{3}{4}}\\
&\ll P^{10-\frac{\delta}{2}+2\ve}\log P,
\end{align*}
which is satisfactory.
Turning to $I_{4,4/3}(9;\m_1^a)$, we note that when $(u,v)=(4,4/3)$
and $k=9$ one has
$$
 \rho_{10}=\frac{5}{7}, \quad \pi_{10}=\frac{37}{21},\quad
\rho_{10}'=\frac{8}{7},\quad 
\pi_{10}'=3,
$$ 
in \eqref{eq:rp1} and \eqref{eq:rp2}. Since \eqref{eq:paris}, \eqref{eq:in1} and
\eqref{eq:in2} are evidently satisfied in Lemma~\ref{lem:v<2}, a modest pause for thought reveals that
$I_{4,4/3}(9;\m_1^a)=o(P^{10})$,  as required.

\subsection{The case $n_1=4$}

We follow the strategy of the preceding section. 
According to Lemma \ref{lem:surface} we may assume that either $C_1$ is
non-singular or else the surface $C_1=0$ contains precisely $3$
conjugate double points. In the latter case \eqref{eq:4_norm} implies
that our cubic form $C$ can be written as
$$
\Norm_{K/\QQ}(x_1\omega_1+\cdots+x_3\omega_3)+ax_4^2\Tr_{K/\QQ}(x_1\omega_1+\cdots+x_3\omega_3)+bx_4^3+
C_2(x_5,\ldots,x_{13}),
$$
for appropriate coefficients $\omega_1,\omega_2,\omega_3 \in K$ and
$a,b \in \ZZ$, and where $K$ is a certain cubic number field.
Setting $x_4=0$ we arrive at  a cubic form in $12$ variables which is exactly of the type 
considered in Theorem \ref{main'''}. Hence $X(\QQ)\neq \emptyset$ in
this case, and so  we are free to proceed under the assumption that $C_1$ is non-singular. 
Throughout this section we will take $w=(w_1,w_2)$ as our weight function, with
$w_1\in \WW_4^{(1)}$ and $w_2\in \WW_{9}^{(2)}$.

Let $\m_1$ be the set of $\al \in \m$ for which there exists
$a,q\in\ZZ$ such that $0\leq a<q\leq P^{21/50}$ and $\gcd(a,q)=1$,
with 
$$
\Big|\al-\frac{a}{q}\Big|\leq q^{-1}P^{-\frac{113}{50}}.
$$
As previously we define $\m\setminus\m_1$ to be the proper minor
arcs and  $\m_1$ to be the improper minor arcs, with the same notation   
$E_{\mathrm{prop}}, E_{\mathrm{improp}}$ for the corresponding
contributions to $E$.

We begin by estimating $E_{\mathrm{prop}}$. 
Taking $u=v=2$ in \eqref{eq:split} and
\eqref{eq:Ivu}, and applying Lemma \ref{lem:C}, we find
that $E_{\mathrm{prop}}\ll I_{2,2}(11/2;\m\setminus \m_1)$.
When $u=v=2$, $k=11/2$ and $n_2=9$ we have 
$$
 \rho_{9}=\frac{18}{19}, \quad \pi_{9}=\frac{67}{38},\quad
\rho_{9}''=\frac{25}{62},\quad 
\psi_{9}=9+\frac{15}{124}=9.12\ldots,
$$
in \eqref{eq:rp1'}, \eqref{eq:r''} and \eqref{eq:psi}.
Furthermore, it is easily checked that $\m\setminus\m_1\subset
\m\setminus\m_0$, where $\m_0$ is as in the statement of Lemma
\ref{lem:v=2}.   On observing that \eqref{eq:in3} and 
\eqref{eq:in1} are satisfied, it therefore follows from 
Lemma \ref{lem:v=2} that $E_{\mathrm{prop}}=o(P^{10})$.

It remains to estimate $E_{\mathrm{improp}}$, for which we select
$$
(A,B,C)=\Big(\frac{21}{50},1, \frac{37}{50}\Big)
$$
in the definition \eqref{eq:Aqa} of $\A$.
Taking $n=4$ and $\sigma=-1$ in Lemma \ref{lem:F} therefore gives
$$
S_1(\al)-S_1^*(\al)\ll P^{\frac{5C}{2}+\ve}+P^{\frac{5A}{6}+\frac{5}{2}+\ve}\ll 
P^{\frac{57}{20}+\ve},
$$
for any $\al\in \A_{q,a}$.  It now follows that 
$$
E_{\mathrm{improp}}
\ll
\int_{\m_1} |S_1^*(\al)S_2(\al)|\d\al
+P^{\frac{57}{20}+\ve} \int_{\m_1} |S_2(\al)|\d\al
=I_1+I_2,
$$
say.
We begin by handling the contribution from $I_2$.  
Using dyadic summation it follows from Lemma \ref{lem:B} that
\begin{align*}
I_2
\ll P^{9+\frac{57}{20}+2\ve} \max_{R,\phi} R^{\frac{7}{8}}\phi
\min\{1,(\phi P^3)^{-\frac{9}{8}}\}
&\ll P^{6+\frac{57}{20}+2\ve} R^{\frac{7}{8}}
\ll P^{9.2175+2\ve},
\end{align*}
where the maximum is over $R,\phi$ such that 
$$
0<R\leq P^{\frac{21}{50}},\quad 0<\phi<R^{-1}P^{-\frac{113}{50}}.
$$
This is plainly satisfactory and so completes our treatment of $I_2$.

We now turn to an upper bound for $I_1$.  
Since $C_1$ is assumed to be good as well as non-singular, we may
apply the second part of Lemma \ref{lem:F} with $k=3$ and $n=4$ to conclude that
$
I_1\ll I_{3,3/2}(9,\m_1).
$
As in the case $n_1=3$ we write $\m_1=\m_1^{a}\cup\m_1^{b}$, where now
$\m_1^{a}$ is the set of $\al\in \m_1$
for which there exist coprime integers $0 \leq a <q$ such that 
$q\leq P^{21/50}$ and  $|\al-a/q|\leq q P^{-3+\delta}$, and 
$\m_1^{b}=\m_1\setminus \m_1^{a}$. It follows from Lemma 
\ref{lem:B} that 
\begin{align*}
I_{3,\frac{3}{2}}(9;\m_1^b)
&\ll 
P^{12+2\ve}\left(
\sum_{q\leq P^{\frac{21}{50}}}q^{1-\frac{27}{16}}\int_{q P^{-3+\delta}}^{q^{-1}P^{-\frac{113}{50}}}
(\theta P^3)^{-\frac{27}{16}}\d\theta
\right)^{\frac{2}{3}}\\
&\ll 
P^{12+2\ve}\left(
\sum_{q\leq P^{\frac{21}{50}}}q^{-\frac{11}{16}}P^{-\frac{81}{16}}
(q P^{-3+\delta})^{-\frac{11}{16}}
\right)^{\frac{2}{3}}\\
&\ll P^{10-\frac{11\delta}{24}+2\ve}.
\end{align*}
To handle $I_{3,3/2}(9;\m_1^a)$ we note that when $(u,v)=(3,3/2)$
and $k=9$ we have 
$$
 \rho_{9}=\frac{3}{4}, \quad \pi_{9}=\frac{29}{16},\quad
\rho_{9}'=\frac{43}{25},\quad 
\pi_{9}'=3,
$$ 
in \eqref{eq:rp1} and \eqref{eq:rp2}. 
Lemma~\ref{lem:v<2} easily gives 
$I_{3,3/2}(9;\m_1^a)=o(P^{10})$,  as required.
This completes the treatment of the improper minor arcs when $(n_1,n_2)=(4,9)$.

\subsection{The case $n_1=5$}

An application of Lemma \ref{lem:3fold} reveals that we are free to assume
that $C_1$ defines a projective cubic hypersurface whose singular locus is either empty or finite.
Throughout this section we will take $w=(w_1,w_2)\in \WW_5^{(1)}\times
\WW_{8}^{(2)}$ as our weight function.

We let the improper minor arcs $\m_1$ be the set of $\al \in \m$ for which there exists
$a,q\in\ZZ$ such that $0\leq a<q\leq P^{\frac{11}{20}+\delta}$ and $\gcd(a,q)=1$,
with 
$$
\Big|\al-\frac{a}{q}\Big|\leq P^{-\frac{5}{2}+\delta},
$$
with $\delta$ given by \eqref{eq:delta}, and we let 
$\m\setminus\m_1$  be the proper minor
arcs.  As above, let $E_{\mathrm{prop}}, E_{\mathrm{improp}}$ denote
the corresponding contributions to $E$.

Taking $u=v=2$ in \eqref{eq:split} and
\eqref{eq:Ivu}, and applying Lemma \ref{lem:C} with $n=5$ and
$\sigma\leq 0$, we find
that $E_{\mathrm{prop}}\ll I_{2,2}(15/2;\m\setminus \m_1)$.
When $u=v=2$, $k=15/2$ and $n_2=8$ we have 
$$
 \rho_{8}=\frac{12}{13}, \quad \pi_{8}=\frac{22}{13},\quad
\rho_{8}''=\frac{11}{20},\quad 
\psi_{8}=9.55,
$$
in \eqref{eq:rp1'}, \eqref{eq:r''} and \eqref{eq:psi}.
Furthermore, it is easily checked that $\m\setminus\m_1\subset
\m\setminus\m_0$, where $\m_0$ is as in the statement of Lemma
\ref{lem:v=2}.   On observing that \eqref{eq:in1} and \eqref{eq:in3}
are satisfied, it therefore follows from 
Lemma \ref{lem:v=2} that $E_{\mathrm{prop}}=o(P^{10})$.

It remains to estimate $E_{\mathrm{improp}}$, for which we select
$$
(A,B,C)=\Big(\frac{11}{20}+\delta,0, \frac{1}{2}+\delta\Big)
$$
in the definition \eqref{eq:Aqa} of $\A$.
Taking $n=5$ and $\delta\leq 0$ in Lemma \ref{lem:F} therefore gives
$$
S_1(\al)-S_1^*(\al)\ll 
P^{3+\frac{5A}{3}+\ve}
+P^{3(A+C)+\ve}\ll P^{3.92},
$$
for any $\al\in \A_{q,a}$.  It now follows that 
$$
E_{\mathrm{improp}}
\ll
\int_{\m_1} |S_1^*(\al)S_2(\al)|\d\al
+P^{3.92} \int_{\m_1} |S_2(\al)|\d\al
=I_1+I_2,
$$
say.

We begin by handling the contribution from $I_2$.  
Using dyadic summation it follows from Lemma \ref{lem:B} that
\begin{align*}
I_2
\ll P^{3.92+\ve} \max_{R,\phi} R \phi
P^{8}\min\{1,(\phi P^3)^{-1}\}
\ll P^{8.92+\ve} R\ll P^{9.47+\delta +\ve},
\end{align*}
where the maximum is over $R,\phi$ such that 
$$
0<R\leq P^{\frac{11}{20}+\delta},\quad 0<\phi<P^{-\frac{5}{2}+\delta}.
$$
This is plainly satisfactory and so completes our treatment of $I_2$.

We now turn to an upper bound for $I_1$. 
Since $C_1$ is assumed to be good, we may
apply the second part of Lemma \ref{lem:F} with $k=12/5$ and $n=5$ to conclude that
$I_1\ll I_{12/5,12/7}(9;\m_1)$, in the notation of \eqref{eq:Iuv}.
When $(u,v)=(12/5,12/7)$, $k=9$ and $n_2=8$ we have 
$$
 \rho_{8}=\frac{4}{5}, \quad \pi_{8}=\frac{66}{35},\quad
\rho_{8}'=\frac{38}{11},\quad 
\pi_{8}'=3,
$$
in \eqref{eq:rp1} and \eqref{eq:rp2}.
Furthermore one easily checks that $k/u=15/4$ satisfies the
inequalities in \eqref{eq:in1}. It therefore follows from
Lemma \ref{lem:v<2} that $I_{12/5,12/7}(9;\m_1\setminus \m_2)=o(P^{10})$, where
$\m_2$ is the set of $\al\in \m_1$ for which there exist coprime
integers $0\leq a<q$ such that
\begin{equation}
  \label{eq:cut}
q\leq P^{\frac{11}{42}+2\delta},\quad q^{\frac{38}{11}}P^{-3-\delta}\leq
\Big|\al-\frac{a}{q}\Big|\leq q^{-\frac{4}{5}}P^{-\frac{66}{35}+\delta}. 
\end{equation}
Note that $q^{-4/5}P^{-66/35+\delta}\leq q^{-1}P^{-3/2}$. 
To handle $I_{12/5,12/7}(9;\m_2)$ we appeal to Lemma
\ref{lem:B}, deducing that 
\begin{align*}
I_{\frac{12}{5},\frac{12}{7}}(9;\m_2)
&\ll P^{8+\frac{15}{4}+2\ve} \max_{R,\phi} (R^2 \phi)^{\frac{7}{12}}
R^{-1}\min\{1,(\phi P^3)^{-1}\}\\
&\ll 
\max_{R,\phi} P^{5+\frac{15}{4}+2\ve}
R^{\frac{1}{6}}\phi^{-\frac{5}{12}}\\
&\ll P^{10+\frac{\delta}{2}+2\ve}R^{-\frac{14}{11}},
\end{align*}
on taking $\phi\geq R^{38/11}P^{-3-\delta}$.
Here the maximum is over the relevant $R,\phi$ determined by \eqref{eq:cut},
with the inequalities $R\leq P^\D$ and $\phi \leq P^{-3+\D}$ not both
holding simultaneously.
Now either $R>P^\D$ and this estimate is satisfactory, or else $R\leq
P^{\D}$ and it follows that we may actually take the lower bound
$\phi>P^{-3+\D}$ in the second term, giving instead
$$
\ll P^{10-\frac{5\D}{12}+2\ve} R^{\frac{1}{6}}\ll P^{10-\frac{\D}{4}+2\ve}.
$$
This too is satisfactory, and so completes the proof that
$I_1=o(P^{10})$, thereby completing the treatment of the minor arcs in
the case $(n_1,n_2)=(5,8)$.

\subsection{The case $n_1=6$}

We now come to the final case that we need to analyse
in our proof of Theorem \ref{main}.
We take $w=(w_1,w_2)\in  \WW_6^{(2)}\times \WW_{7}^{(2)}$, and seek 
an estimate for
$$
  M_6(P;\m):=\int_{\m}|S_1(\al)|^{2}\d\al.
$$
Note that the integral is now taken over the set of minor arcs, rather
than the entire interval $[0,1]$ as in \eqref{eq:moment}.
As usual we assume that $C_1$ and $C_2$ are good. 
We will show that 
\begin{equation}
  \label{eq:log}
 M_6(P;\m)\ll P^{9+\ve}. 
\end{equation}
Once in place, we may take 
$u=v=2$ in \eqref{eq:split} and \eqref{eq:Iuv} to conclude that
$
E\ll I_{2,2}(9;\m),
$
whence the desired conclusion is
given by Lemma \ref{lem:v=2'}.

It remains to establish \eqref{eq:log}. 
Let us consider the consequences of applying Lemma \ref{lem:D} to
estimate $M=M_6(P;\m)$, following the general approach in the proof of
Lemmas \ref{lem:v<2} and \ref{lem:v=2}.
We have 
$$
M
\ll P^3+P^\ve \max_{R,\phi}
\frac{\psi_H R^2 P^{11}}{H^{10}}F,
$$
where $\psi_H$ and $F$ are as in the statement of Lemma \ref{lem:D}
and $H\in [1,P]\cap \ZZ$ is arbitrary. Furthermore the maximum is over
$R,\phi$ such that
\eqref{eq:range} holds with any choice of $Q\geq 1$ that we care
to choose. We will take $Q=P^{3/2}$.
In our deduction of \eqref{eq:log} it will be convenient to allow the value of $\ve>0$
to take different values at different parts of the argument. 

We define $\phi_0:=R^{-2/11}P^{-2}$ and take
$$
H:=
\begin{cases}
\lfloor P^{\ve}\max\{1,(\phi R^{2}P^{2})^{\frac{1}{10}}\} \rfloor, &\mbox{if  $\phi>\phi_0$,}\\
\lfloor P^{\ve} R^{\frac{2}{11}} \rfloor, &\mbox{if  $\phi\leq \phi_0$.}
\end{cases}
$$
If we can show that $F\ll P^{\ve}$ with this
choice of $H$ then \eqref{eq:log} will follow.  It is clear that $H$ is an integer in the
interval $[1,P]$.

Suppose first that $\phi>\phi_0$. Then $\psi_H\ll \phi$ and it follows that
\begin{align*}
RH^3\psi_H
\ll P^\ve R\phi (1+\phi R^2P^2)^{\frac{3}{10}}
&\ll 1+ Q^{-1}P^{\frac{3}{5}+\ve}\ll 1,
\end{align*}
by \eqref{eq:range} and the fact that $Q=P^{3/2}$.
The third term in $F$ is 
\begin{align*}
\frac{H^{6}}{R^{3}(P^2\psi_H)^{2}}\ll 
\frac{P^{\ve}}{R^{3}P^{4}\phi^{2}}
(1+\phi R^2P^2)^{\frac{3}{5}}
&\ll
\frac{P^{\ve}}{R^{3}P^{4}\phi_0^{2}}
+\frac{P^{\ve}}{R^{\frac{9}{5}}P^{\frac{14}{5}}\phi_0^{\frac{7}{5}}}
\ll P^{\ve},
\end{align*}
which is satisfactory. 

Suppose now that $\phi\leq \phi_0$.  Then $\psi_H\ll (P^2H)^{-1}$ and it follows that
\begin{align*}
RH^3\psi_H
&\ll \frac{RH^2}{P^2}\ll \frac{RP^\ve}{P^2}(1+R^{\frac{2}{11}})\ll
1+\frac{Q^{\frac{13}{11}}P^\ve}{P^2}\ll 1.
\end{align*}
Turning to the third term in $F$, we find that
\begin{align*}
\frac{H^{6}}{R^{3}(P^2\psi_H)^{2}}\ll 
\frac{H^{8}}{R^{3}}
\ll
\frac{P^{\ve}}{R^{3}}
(1+R^{\frac{16}{11}})
\ll P^\ve,
\end{align*}
which is also satisfactory. This therefore completes the proof of
\eqref{eq:log}, and so our treatment of the case $(n_1,n_2)=(6,7)$.

\newpage
\appendix

 \setcounter{section}{-1}
\section{Appendice par J.-L. Colliot-Th\'el\`ene}

\begin{center}
\textsc{Groupe de Brauer non ramifi\'e des hypersurfaces
cubiques\\  singuli\`eres  (d'apr\`es P. Salberger)}
\end{center}

\bigskip

En r\'eponse \`a une question de R. Heath-Brown, P. Salberger en 2006
a indiqu\'e les grandes lignes de la d\'emonstration de l'\'enonc\'e suivant, qui
\'etend un r\'esultat
connu  dans le cas lisse (\cite{CT}).   Nous donnons le d\'etail de la d\'emonstration.
On utilise les notations usuelles
dans ce domaine. Pour $X$ un sch\'ema on note $\pic X= H^1_{\et}(X,\G_{m})$
son groupe de Picard et $\br X=H^2_{\et}(X,\G_{m})$ son groupe de Brauer.
Pour les propri\'et\'es usuelles de ces groupes, nous renvoyons le
lecteur \`a   \cite{CTSan}.

\begin{theorem-f}
Soit $k$ un corps de caract\'eristique z\'ero, $\kbar$ une cl\^oture alg\'ebrique,
$\g$ le groupe de Galois de $\kbar$ sur $k$.  Soit $X \subset {\bf P}^n_{k}$
une  intersection compl\`ete  g\'eo\-m\'etriquement int\`egre de dimension au moins $3$.
Supposons le lieu singulier vide
ou de codimension au moins \'egale \`a $4$ dans $X$.
Alors pour tout   $k$-mod\`ele projectif et   lisse $Y$ de $X$,
\begin{itemize}
\item[(a)]
le groupe de Picard de $\Y=Y\times_{k}\kbar$ est un module galoisien $\Z$-libre de type
fini
qui est stablement de pemutation;
\item[(b)]
on a $H^1(\g,\pic \Y)=0$;
\item[(c)]
on a $\br k    {\buildrel \simeq \over \rightarrow}   {\rm Ker} [ \br Y \to \br \Y]     .$
\end{itemize}
\end{theorem-f}

\begin{proof}[D\'emonstration]
Les anneaux locaux de $X$ en codimension au
moins 3 sont r\'e\-gu\-liers, donc factoriels (th\'eor\`eme
d'Auslan\-der-Buchsbaum, voir \cite[\S XI Thm. 3.13]{SGA 2}).
Comme $X$ est une intersection compl\`ete,
un  th\'eor\`eme de Grothendieck  \cite[\S XI Cor. 3.14]{SGA 2}, ex-conjecture de Samuel)
implique que tous les anneaux locaux de $X$ sont factoriels.
Ainsi les diviseurs de Weil sur $X$ sont tous des diviseurs de Cartier.
Ceci implique que pour tout ouvert $U \subset X$ la fl\`eche de restriction $\pic X \to
\pic U$ est surjective.
Cette fl\`eche est aussi injective. Soit en effet $D$ un diviseur sur $X$
qui est le diviseur d'une fonction rationnelle $f$ sur $U$.
Comme le compl\'ementaire de $U$ dans $X$ est de codimension
au moins 2 et que sur $X$ diviseurs de Weil et diviseurs de Cartier
co\"{\i}ncident, on conclut que $D$ est le diviseur de $f$ sur $X$.
Le m\^eme argument montre que toute fonction rationnelle sur $X$
d\'efinie et  inversible sur $U$ est d\'efinie et inversible sur $X$,
et comme $X$ est projectif et g\'eom\'etriquement int\`egre,
toute telle fonction est une constante, elle appartient \`a  $k^*$.

L'hypoth\`ese sur la codimension du lieu singulier est g\'eom\'etrique,
elle vaut pour $X_{K}$ pour toute extension $K/k$ de corps, par
exemple $\kbar/k$. Les m\^emes conclusions s'appliquent
donc \`a $\X$. En particulier
la fl\`eche de restriction
$\pic \X \to \pic \U $ est   un isomorphisme.

Par ailleurs, le Corollaire 3.7 de \cite[\S XII]{SGA 2} montre que la fl\`eche de
restriction
$$
\Z= \pic  {\bf P}^n_{k} \to \pic X
$$
qui envoie la classe de $1 \in \Z$
sur la classe du faisceau inversible ${\mathcal O}_{X}(1)$
est un isomorphisme. Il en est de m\^eme de
  $\Z= \pic  {\bf P}^n_{\kbar} \to \pic  \X $,
et l'action du groupe
de Galois sur $\Z    \simeq  \pic  \X $ est triviale.

En conclusion, sous les hypoth\`eses du th\'eor\`eme, le module galoisien
$\pic \U$ est le module galoisien trivial $\Z$   et
l'on a $\kbar^*   {\buildrel \simeq \over \rightarrow}   \kbar[U]^*$, o\`u $\kbar[U]$ est l'anneau des
fonctions d\'efinies sur $\U$. L'argument ci-dessus
montre aussi que l'application naturelle
$\pic U \to \pic \U$ est un isomorphisme.

D'une suite exacte bien connue (cf. \cite[p.386]{CTSan})
on d\'eduit que la fl\`eche naturelle
$\br k \to {\rm Ker} [\br U \to \br \U]$ est un isomorphisme.
Par des arguments standards sur le groupe de Brauer (puret\'e et
injection par passage d'une vari\'et\'e lisse \`a un ouvert)
un tel \'enonc\'e
implique le m\^eme \'enonc\'e pour toute $k$-vari\'et\'e projective et
lisse $k$-birationnelle \`a $X$ : c'est l'\'enonc\'e (c).

Soit $U \subset Y$ une $k$-compactification lisse de $U$
(le th\'eor\`eme de Hironaka assure l'existence d'une telle compactification).
On a alors la suite exacte de modules galoisiens
$$0 \to \Div_{\infty}\Y \to \pic \Y \to \pic \U  \to 0,$$
o\`u le groupe $\Div_{\infty}\Y $ est
le module de permutation sur les points de codimension 1 de $\Y $ en dehors de $\U $,
et le z\'ero
\`a gauche tient au fait que l'on a $\kbar^* \simeq \kbar[U]^*$.
La suite de modules galoisiens ci-dessus est scind\'ee, car tout groupe
$H^1(\g, P)$ \`a valeurs dans un module de permutation est nul (lemme de Shapiro).
Ainsi $\pic \Y$ est la somme directe de deux modules de permutation,
et est donc un module de permutation. Il en r\'esulte que pour tout autre mod\`ele
projectif
et lisse $Y'$, le module galoisien $\pic  \Y'$ est stablement de permutation
(\cite[Prop. 2.A.1 on p. 461]{CTSan}).
L'\'enonc\'e (b) en r\'esulte.
\end{proof}

\begin{cor-f}
Soit $X \subset {\bf P}^n_{k}$ une hypersurface cubique g\'eom\'etriquement in\-t\`egre
de dimension au moins $3$ qui n'est pas un c\^one.
Supposons le lieu singulier vide
ou de codimension au moins \'egale \`a $4$ dans $X$. Alors pour tout mod\`ele projectif
et lisse
$Y$ de $X$, on a $\br k  {\buildrel \simeq \over \rightarrow}  \br Y$.
\end{cor-f}

\begin{proof}[D\'emonstration] Au vu du th\'eor\`eme ci-dessus, il suffit de montrer
$\br \Y =0$.

Si l'hypersurface $X$ est lisse, on a $\br \X =0$  comme il est
\'etabli dans \cite{CT} sans restriction sur le degr\'e de
l'hypersurface.
Par l'invariance birationnelle du groupe de
Brauer pour les vari\'et\'es projectives et lisses ceci implique $\br \Y =0$.

Si  l'hypersurface cubique $X$ est singuli\`ere,
comme elle n'est pas un c\^one,
en utili\-sant les droites passant par un $\kbar$-point singulier on obtient
une \'equivalence birationnelle  de $\X$
avec l'espace projectif ${\bf P}^{n-1}_{\kbar}$, dont le groupe de Brauer est nul.
Par l'invariance birationnelle du groupe de Brauer
pour les vari\'et\'es projectives et lisses ceci implique $\br \Y =0$.
\end{proof}

\begin{remark}
Il serait int\'eressant de voir si le corollaire vaut pour
les hypersurfaces   de degr\'e sup\'erieur \`a 3.
C'est le cas lorsque les hypersurfaces sont lisses (\cite{CT}).
\end{remark}

\begin{remark}
La condition que la codimension du lieu singulier est au moins \'egale \`a 4
est n\'ecessaire. Dans \cite{CTSal} on trouve des hypersurfaces
cubiques g\'eom\'etrique\-ment int\`egres non coniques dans ${\bf P}^4_{k}$,
dont le lieu singulier est  un ensemble fini non vide de points,
et  qui admettent un mod\`ele projectif et lisse $Y$
avec $\br Y/\br k \neq 0$.
\end{remark}

\begin{remark}
Lorsque $k$ est un corps de nombres, le corollaire permet de conjecturer
que, sous les hypoth\`eses donn\'ees, le principe de Hasse et l'approxi\-mation faible
valent pour le lieu lisse de l'hypersurface cubique $X$.
\end{remark}

\renewcommand{\refname}{R\'ef\'erences}

\begin{flushright}
\scriptsize{
J.-L. Colliot-Th\'el\`ene}\\
\scriptsize{C.N.R.S., UMR 8628 CNRS-Universit\'e}\\
\scriptsize{Math\' ematiques, B\^atiment 425}\\
\scriptsize{Universit\'e   Paris-Sud}\\
\scriptsize{F-91405 Orsay}\\
\scriptsize{France}\\
\texttt{jlct@math.u-psud.fr}
\end{flushright}

\end{document}